\documentclass[oneside,english,reqno]{amsart}
\usepackage{hyperref}

\usepackage{amsmath}
\usepackage{amsfonts,amssymb,amsmath,latexsym,wasysym,mathrsfs,bbm,stmaryrd}
\usepackage[all]{xy}

\usepackage[mathcal]{euscript}
\usepackage{times}

\def\1{\mathbbm{1}}

\long\def\symbolfootnote[#1]#2{\begingroup%
\def\thefootnote{\fnsymbol{footnote}}\footnote[#1]{#2}\endgroup}

\usepackage{amsthm}

\newtheorem{theorem}{Theorem}[section]

\newtheorem{proposition}[theorem]{Proposition}
\newtheorem{definition}[theorem]{Definition}

\newtheorem{corollary}[theorem]{Corollary}

\newtheorem{lemma}[theorem]{Lemma}
\newtheorem{notation}[theorem]{Notation}
\numberwithin{equation}{section} 

\theoremstyle{remark}
\newtheorem{example}[theorem]{Example}
\newtheorem{remark}[theorem]{Remark}

\numberwithin{equation}{section}

\begin{document}

\title{The Complex-Time Segal--Bargmann Transform}

\title{The Complex Time Segal--Bargmann Transform}
\author{Bruce K. Driver}
\address{Department of Mathematics \\
University of California, San Diego \\
La Jolla, CA 92093-0112 \\
\texttt{bdriver@ucsd.edu}}
\author{Brian C. Hall}
\thanks{Supported in part by NSF Grant DMS-1301534}
\address{Department of Mathematics \\
University of Notre Dame \\
Notre Dame, IN 46556 \\
\texttt{bhall@nd.edu}}
\author{Todd Kemp}
\thanks{Supported in part by NSF CAREER Award DMS-1254807 and NSF Grant DMS-1800733}
\address{Department of Mathematics \\
University of California, San Diego \\
La Jolla, CA 92093-0112 \\
\texttt{tkemp@ucsd.edu}}

\begin{abstract}

We introduce a new form of the Segal--Bargmann transform for a Lie group $K$
of compact type. We show that the heat kernel $(\rho_{t}(x))_{t>0,x\in K}$ has
a space-time analytic continuation to a holomorphic function
\[
(\rho_{\mathbb{C}}(\tau,z))_{\mathrm{Re}\,\tau>0,z\in K_{\mathbb{C}}}
\]
where $K_{\mathbb{C}}$ is the complexification of $K$. The new transform is
defined by the integral
\[
(B_{\tau}f)(z)=\int_{K}\rho_{\mathbb{C}}(\tau,zk^{-1})f(k)\,dk,\quad z\in
K_{\mathbb{C}}.
\]
If $s>0$ and $\tau\in\mathbb{D}(s,s)$ (the disk of radius $s$ centered at
$s$), this integral defines a holomorphic function on $K_{\mathbb{C}}$ for
each $f\in L^{2}(K,\rho_{s})$. We construct a heat kernel density $\mu_{s,\tau}$
on $K_{\mathbb{C}}$ such that, for all $s,\tau$ as above,
$B_{s,\tau}:=B_{\tau}|_{L^{2}(K,\rho_{s})}$ is an isometric isomorphism from
$L^{2}(K,\rho_{s})$ onto the space of holomorphic functions in
$L^{2}(K_{\mathbb{C}},\mu_{s,\tau})$. When $\tau=t=s$, the transform $B_{t,t}$
coincides with the one introduced by the second author for compact groups and
extended by the first author to groups of compact type. When $\tau=t\in
(0,2s)$, the transform $B_{s,t}$ coincides with the one introduced by the
first two authors.

\end{abstract}

\maketitle

\tableofcontents

\section{Introduction}

\subsection{The Classical Segal--Bargmann Transform\label{SBclassical.sec}}

This paper concerns a generalization of the Segal--Bargmann transform over
compact-type Lie groups, to allow the time parameter of the transform to be
complex. We begin by briefly discussing the history of the transform. For
$t>0$ and $d\in\mathbb{N}$, let $\rho_{t}$ denote the variance-$t$ Gaussian
density on $\mathbb{R}^{d}$:
\[
\rho_{t}(x)=(2\pi t)^{-d/2}\exp\left(  -\frac{|x|^{2}}{2t}\right)  .
\]
This is the \emph{heat kernel} on $\mathbb{R}^{d}$: the solution $u$ of the
heat equation $\partial_{t}u=\frac{1}{2}\Delta u$ with (sufficiently
integrable) initial condition $f$ is given in terms of $\rho_{t}$ by
\begin{equation}
u(t,x)=(\rho_{t}\ast f)(x)=\int_{\mathbb{R}^{d}}\rho_{t}(x-y)f(y)\,dy.
\label{e.heat.eq}%
\end{equation}

The function $\rho_{t}$ admits an explicit entire analytic continuation to
$\mathbb{C}^{d}$, which we call $(\rho_{t})_{\mathbb{C}}$: it is simply the
function
\[
(\rho_{t})_{\mathbb{C}}(z)=(2\pi t)^{-d/2}\exp\left(  -\frac{z\cdot z}%
{2t}\right)  ,
\]
where $z\cdot z=\sum_{j=1}^{d}z_{j}^{2}.$ If $f\in L_{\mathrm{loc}}%
^{1}(\mathbb{R}^{d})$ and of sufficiently slow growth, then the integral
\begin{equation}
(B_{t}f)(z):=\int_{\mathbb{R}^{d}}(\rho_{t})_{\mathbb{C}}(z-y)f(y)\,dy
\label{e.SB.original}%
\end{equation}
converges and defines an entire holomorphic function on $\mathbb{C}^{d}$.

The map $f\mapsto B_{t}f$ is equivalent to the \textbf{Segal--Bargmann
transform}, invented and explored by the eponymous authors of
\cite{Bargmann1961,Bargmann1962,Segal1962,Segal1963,Segal1978}. Note that
neither Segal nor Bargmann explicitly connected the transform to the heat
kernel, nor did they write the transform precisely as in (\ref{e.SB.original}%
). Nevertheless, their transforms can easily be rewritten in the form
(\ref{e.SB.original}) by simple changes of variable; cf.\ \cite{Hall2000}.

We consider also the heat kernel on $\mathbb{C}^{d}\cong\mathbb{R}^{2d}$ (with
time-parameter rescaled by a factor of 2), which we refer to as $\mu_{t}$:
\[
\mu_{t}(z)=(\pi t)^{-d}\exp(-|z|^{2}/t).
\]
(Note that the real, positive function $\mu_{t}$ on $\mathbb{C}^{d}$ is not
the same as the holomorphic function $(\rho_{t})_{\mathbb{C}}$.) The main
theorem is that $B_{t}$ is an isometric isomorphism from $L^{2}(\mathbb{R}%
^{d},\rho_{t})$ onto $\mathcal{H}L^{2}(\mathbb{C}^{d},\mu_{t})$ --- the space
of holomorphic functions in $L^{2}(\mathbb{C}^{d},\mu_{t})$. For more
information about the classical Segal--Bargmann transform, see, for example,
\cite{Hall2000,HallQM}.

\subsection{The Segal--Bargmann Transform for Lie Groups of Compact Type}

In \cite{Hall1994}, the second author introduced an analog of the
Segal--Bargmann transform on an arbitrary compact Lie group. Then, in
\cite{Driver1995}, the first author extended the results of \cite{Hall1994} to
a Lie group $K$ of compact type (Section \ref{sec.ctlgtc}), a class that
includes both compact groups and $\mathbb{R}^{d}$. The idea of \cite{Hall1994}
and \cite{Driver1995} is the same as in the $\mathbb{R}^{d}$ case: the heat
kernel $\rho_{t}$ on $K$ has an entire analytic continuation $(\rho
_{t})_{\mathbb{C}}$ to the \emph{complexification} $K_{\mathbb{C}}$ of $K$.
The transform $B_{t}$ is defined by the group convolution formula generalizing
\eqref{e.SB.original}:
\begin{equation}
(B_{t}f)(z)=\int_{K}\,(\rho_{t})_{\mathbb{C}}(zk^{-1})f(k)\,dk.
\label{e.SB.Hall}%
\end{equation}
The theorem is that $B_{t}$ is an isometric isomorphism from $L^{2}(K,\rho
_{t})$ onto the holomorphic space $\mathcal{H}L^{2}(K_{\mathbb{C}},\mu_{t})$,
where $\mu_{t}$ is the (time-rescaled) heat kernel on $K_{\mathbb{C}}$. If
$K=\mathbb{R}^{d}$, then $B_{t}$ is precisely the classical Segal--Bargmann
transform of Section \ref{SBclassical.sec}.

Later, in \cite{Driver1999,Hall2001b}, the authors made a further
generalization related to the time parameter $t$. One can use a different time
$s\neq t$ to measure the functions $f$ in the domain, while still using the
analytically continued heat kernel at time $t$ to define the transform, as in
\eqref{e.SB.Hall}. The resulting map,
\[
B_{s,t}\colon L^{2}(K,\rho_{s})\rightarrow\mathcal{H}L^{2}(K_{\mathbb{C}}%
,\mu_{s,t})
\]
is still an isometric isomorphism for an appropriate two-parameter heat kernel
density $\mu_{s,t}$, provided $0<t<2s$. Note that the formula for the
transform $B_{s,t}$ does not depend on $s$; this parameter only indicates the
inner product to be used on the domain and range spaces. In the special case
that $K=\mathbb{R}^{d}$, the two-parameter heat kernel density $\mu_{s,t}$ in
the range is a Gaussian measure with different variances in the real and
imaginary directions. (Take $u=0$ in (\ref{muRk}) below.)

\begin{remark}
\label{pointwise.rem} For a complex manifold $M$, let $\mathcal{H}(M)$ denote
the space of holomorphic functions on $M$. If $\mu$ is a measure on $M$ having
a strictly positive, continuous density with respect to the Lebesgue measure
in each holomorphic local coordinate system, it is not hard to show that
$\mathcal{H}L^{2}(M,\mu):=\mathcal{H}(M) \cap L^{2}(M,\mu)$ is a closed
subspace of $L^{2}(M,\mu)$ and is therefore a Hilbert space. Furthermore, the
\emph{pointwise evaluation map} $F\mapsto F(z)$ is continuous for each $z\in
M$, and the norm of this functional is locally bounded as a function of $z$.
(See, for example, Theorem 3.2 and Corollary 3.3 in \cite{Driver2015} or
Theorem 2.2 in \cite{Hall2000}.)
\end{remark}

\subsection{The Complex-Time Segal--Bargmann Transform}

The topic of the present paper is a new generalization that modifies the
transform $B_{s,t}$ as well; in particular, we show that the time parameter
$t$ can also be extended into the complex plane, and there is still an
isomorphism between real and holomorphic $L^{2}$ spaces of associated heat
kernel measures. This generalization is natural and, in a certain sense, a
completion of Segal--Bargmann transform theory, as explained below. (See
Theorem \ref{t.Ad(K)-inv.inn.prod}.)

Let $K$ be a compact-type Lie group with Lie algebra $\mathfrak{k}$, and fix
an $\mathrm{Ad}(K)$-invariant inner product $\left\langle \cdot,\cdot
\right\rangle _{\mathfrak{k}}$ on $\mathfrak{k}$ (Section \ref{sec.ctlgtc}).
This induces a bi-invariant Riemannian metric on $K$, and an associated
Laplace operator $\Delta_{K}$, which is bi-invariant, elliptic, and
essentially self-adjoint in $L^{2}(K)$. There is an associated \textbf{heat
kernel}, $\rho_{t}\in C^{\infty}(K,(0,\infty))$, satisfying%
\begin{equation}
\left(  e^{\Delta_{K}/2}f\right)  (x)=\int_{K}\rho_{t}(xy^{-1})f(y)\,dk\qquad
\text{for all}\;f\in L^{2}(K)\text{ and }t>0. \label{e.d.Heat.kernel}%
\end{equation}
Our first theorem is that the heat kernel can be complexified in both space
and time.

\begin{theorem}
\label{t.complex.time.heat.kernel} Let $K$ be a connected Lie group of compact
type, with a given $\mathrm{Ad}(K)$-invariant inner product on its Lie algebra
$\mathfrak{k}$, and let $(\rho_{t})_{t>0}$ be the associated heat kernel. Let
$\mathbb{C}_{+}$ denote the right half-plane $\{\tau=t+iu\colon t>0,u\in
\mathbb{R}\}$. There is a unique holomorphic function
\[
\rho_{\mathbb{C}}\colon\mathbb{C}_{+}\times K_{\mathbb{C}}\rightarrow
\mathbb{C}%
\]
such that $\rho_{\mathbb{C}}(t,x)=\rho_{t}(x)$ for all $t>0$ and $x\in
K\subset K_{\mathbb{C}}$.
\end{theorem}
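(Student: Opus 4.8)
The plan is to represent the heat kernel by its spectral expansion, in which both the time- and the space-complexifications appear transparently, and then to \emph{define} $\rho_{\mathbb{C}}$ by the complexified expansion and prove that it converges locally uniformly to a jointly holomorphic function. Existence of the holomorphic extension in the space variable for each fixed real $t$ is already known (the continuation $(\rho_{t})_{\mathbb{C}}$ of \cite{Hall1994,Driver1995}); the new content is joint holomorphy in $(\tau,z)$ together with the extension of $t$ into the half-plane $\mathbb{C}_{+}$, and this is cleanest to obtain by reinstating the explicit series.

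First I would use the structure theory of compact-type groups to reduce to two building blocks. The universal cover of $K$ is $\mathbb{R}^{d}\times K_{\mathrm{ss}}$ with $K_{\mathrm{ss}}$ simply connected compact semisimple, and $K$ is a central quotient thereof; the heat kernel on the quotient is the periodization (method of images) of the heat kernel on the cover, so it suffices to complexify each factor and then check that the periodizing sum preserves local uniform convergence. On the $\mathbb{R}^{d}$ factor the heat kernel is the explicit Gaussian, whose continuation $(2\pi\tau)^{-d/2}\exp(-z\cdot z/2\tau)$ is manifestly holomorphic on $\mathbb{C}_{+}\times\mathbb{C}^{d}$ (principal branch of $\tau^{-d/2}$). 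On $K_{\mathrm{ss}}$, Peter--Weyl gives
\[
\rho_{t}(x)=\sum_{\pi}d_{\pi}\,e^{-\lambda_{\pi}t/2}\,\chi_{\pi}(x),
\]
where the sum runs over the irreducible representations, $d_{\pi}=\dim\pi$, $\lambda_{\pi}$ is the Casimir (i.e.\ $-\Delta_{K}$) eigenvalue, and $\chi_{\pi}$ is the character. Each $\chi_{\pi}$ extends to a holomorphic function on $K_{\mathbb{C}}$ as the trace of the holomorphic extension of $\pi$, and each summand $d_{\pi}e^{-\lambda_{\pi}\tau/2}\chi_{\pi}(z)$ is jointly holomorphic on $\mathbb{C}_{+}\times K_{\mathbb{C}}$. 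I would then define
\[
\rho_{\mathbb{C}}(\tau,z)=\sum_{\pi}d_{\pi}\,e^{-\lambda_{\pi}\tau/2}\,\chi_{\pi}(z)
\]
on the semisimple factor, multiply by the Gaussian on the Euclidean factor, and periodize.

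The crux is the convergence estimate. Using the polar decomposition $z=k\exp(iY)$ of $K_{\mathbb{C}}=K\exp(i\mathfrak{k})$ and the fact that $\pi$ is unitary on $K$ while $\pi(\exp(iY))=\exp(i\pi_{\ast}(Y))$ has Hermitian exponent, one gets $|\chi_{\pi}(z)|\le d_{\pi}\,e^{c_{Z}\sqrt{\lambda_{\pi}}}$ uniformly for $z$ in a compact set $Z\subset K_{\mathbb{C}}$, the $\sqrt{\lambda_{\pi}}$ arising because the operator norm of $\pi_{\ast}(Y)$ grows like $\sqrt{\lambda_{\pi}}$. Combined with Weyl's polynomial bounds on $d_{\pi}$ and on the number of $\pi$ with $\lambda_{\pi}\le R$, the tail of the series is dominated by $\sum_{\pi}d_{\pi}^{2}\,e^{-(\mathrm{Re}\,\tau/2)\lambda_{\pi}+c_{Z}\sqrt{\lambda_{\pi}}}$, which converges locally uniformly on $\mathbb{C}_{+}\times K_{\mathbb{C}}$ because on any compact set $\mathrm{Re}\,\tau$ is bounded below and the linear decay in $\lambda_{\pi}$ swamps the $\sqrt{\lambda_{\pi}}$ growth. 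Local uniform convergence of jointly holomorphic summands then yields a jointly holomorphic limit by the Weierstrass convergence theorem in several complex variables. Finally, $\rho_{\mathbb{C}}$ agrees with $\rho_{t}$ on $(0,\infty)\times K$, which is a uniqueness set for holomorphic functions on the connected domain $\mathbb{C}_{+}\times K_{\mathbb{C}}$, so the identity theorem gives uniqueness. I expect the main obstacle to be the character growth estimate $|\chi_{\pi}(z)|\le d_{\pi}e^{c_{Z}\sqrt{\lambda_{\pi}}}$ and its uniformity over compacta in $K_{\mathbb{C}}$, since this is precisely where the balance between the exponential growth of matrix coefficients on $K_{\mathbb{C}}$ and the Gaussian time-decay must be made quantitative.
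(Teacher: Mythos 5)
Your proposal is correct, and it takes a genuinely different route from the paper --- in fact it is precisely the route the paper acknowledges and deliberately sidesteps. You revive the character-expansion method of \cite[Section 4]{Hall1994}: expand $\rho_{t}$ on the compact factor via Peter--Weyl, continue each term $d_{\pi}e^{-\lambda_{\pi}\tau/2}\chi_{\pi}(z)$, and control the tail by $|\chi_{\pi}(z)|\leq d_{\pi}e^{c_{Z}\sqrt{\lambda_{\pi}}}$ together with Weyl-type polynomial bounds on $d_{\pi}$ and on the counting function of $\{\lambda_{\pi}\leq R\}$. Your crux estimate is sound: since $-C_{\pi}=\lambda_{\pi}I$ and each $-\pi_{\ast}(X_{j})^{2}\geq0$, one gets $\|\pi_{\ast}(X_{j})\|\leq\sqrt{\lambda_{\pi}}$, hence $\|\pi_{\ast}(Y)\|\leq c_{Z}\sqrt{\lambda_{\pi}}$ uniformly over compacta; and, as the paper itself remarks at the start of its compact-case proof, complexifying the time parameter costs nothing in the absolute-convergence estimates because $\tau$ enters the exponent linearly. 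The paper instead proves Theorem \ref{t.complex.time.heat.kernel} (inside Theorem \ref{analyticContHeat.thm}) with no representation-theoretic estimates at all: having built the unitary map $\overline{M}_{s,\tau}$ (Theorem \ref{MstUnitary.thm}), it \emph{defines} $\rho_{\C}(\tau,z):=(\overline{M}_{s,\tau-\epsilon}\rho_{\epsilon})(z)$, gets joint holomorphy from the locally uniform pointwise bounds of Lemma \ref{mstauHolo.lem}, recovers $\rho_{t}$ on real time from the semigroup identity $\rho_{t-\epsilon}\ast\rho_{\epsilon}=\rho_{t}$ via Lemma \ref{mstauHeat.lem}, and patches rectangles $U_{\delta,r}$ by uniqueness of analytic continuation. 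Your method buys an explicit, estimable series for $\rho_{\C}$ at the cost of highest-weight machinery; the paper's soft argument reuses machinery it needs anyway and lets the transform come first and the kernel second. Two economies for your write-up: the universal cover and method of images are unnecessary, since Proposition \ref{p.compact.type.decomp} gives $K\cong K_{0}\times\R^{d}$ with $K_{0}$ compact and your expansion works verbatim on any compact $K_{0}$ ($\rho_{t}$ is central and $C_{\pi}$ is scalar on irreducibles, both by $\mathrm{Ad}(K)$-invariance and Schur); and if you do periodize, you should note that $K_{\C}$ is the corresponding quotient of $\widetilde{K}_{\C}$, so the periodized continuation lives on the right space --- both checks go through but should be stated.
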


\noindent Theorem \ref{t.complex.time.heat.kernel} is proved in Section
\ref{Section SBT}, as part of Theorem \ref{analyticContHeat.thm}.

Following the pattern described above for the $\mathbb{R}^{d}$ case, we make
the following definition.

\begin{notation}
[Complex-time Segal--Bargmann transform]\label{not.ctsbt} For $\tau
\in\mathbb{C}_{+}$ and $z\in K_{\mathbb{C}}$, define
\begin{equation}
\left(  B_{\tau}f\right)  (z):=\int_{K}\rho_{\mathbb{C}}(\tau,zk^{-1}%
)f(k)\,dk\qquad\text{for}\;z\in K_{\mathbb{C}} \label{e.SB.Ctime}%
\end{equation}
for all measurable functions $f\colon K\rightarrow\mathbb{C}$ satisfying
\begin{equation}
\int_{K}\left\vert \rho_{\mathbb{C}}(\tau,zk^{-1})f(k)\right\vert ~dk<\infty.
\label{e.integrable}%
\end{equation}
Further let $\mathcal{D}(B_{\tau})$ denote the vector space of measurable
functions $f\colon K\rightarrow\mathbb{C}$ such that \eqref{e.integrable}
holds for all $z\in K_{\mathbb{C}}$ and such that $B_{\tau}f\in\mathcal{H}%
(K_{\mathbb{C}})$.
\end{notation}

As defined, $\mathcal{D}(B_{\tau})$ is a linear subspace of the measurable
$\mathbb{C}$-valued functions on $K$, and $B_{\tau}\colon\mathcal{D}(B_{\tau})
\rightarrow\mathcal{H}(K_{\mathbb{C}})$ is a linear map. The main theorem of
this paper (Theorem \ref{t.main}) identifies $L^{2}$-Hilbert subspaces of
$\mathcal{D}(B_{\tau})$ and $\mathcal{H}(K_{\mathbb{C}})$ which are unitarily
equivalent to one another under the action of $B_{\tau}$. To describe the
relevant subspaces of $\mathcal{H}(K_{\mathbb{C}})$ we need a little more notation.

As on $K$, we fix once and for all a right Haar measure $\lambda$ on
$K_{\mathbb{C}}$, and typically write $dz$ for $\lambda(dz)$ and
$L^{2}(K_{\mathbb{C}})$ for $L^{2}(K_{\mathbb{C}},\lambda)$. When $s>0$, let
$\mathbb{D}(s,s)\subset\mathbb{C}_{+}$ denote the open disk of radius $s$
centered at $s$.

\begin{definition}
\label{d.Lap.rsu}Let $s>0$ and $\tau=t+iu\in\mathbb{C}$. The \textbf{$(s,\tau
)$-Laplacian} $\Delta_{s,\tau}$ on $K_{\mathbb{C}}$ is the left-invariant
differential operator
\begin{equation}
\Delta_{s,\tau}=\sum_{j=1}^{\dim\mathfrak{k}}\left[  \left(  s-\frac{t}%
{2}\right)  \tilde{X}_{j}^{2}+\frac{t}{2}\tilde{Y}_{j}^{2}-u\,\tilde{X}%
_{j}\tilde{Y}_{j}\right]  \label{e.def.Delta.s.tau}%
\end{equation}
where $\{X_{j}\}_{j=1}^{\dim\mathfrak{k}}$ is any orthonormal basis of
$\mathfrak{k}$, and $Y_{j}=JX_{j}$ where $J$ is operation of multiplication by
$i$ on $\mathfrak{k}_{\mathbb{C}}=\operatorname*{Lie}(K_{\mathbb{C}})$. Here,
for any $Z\in\mathfrak{k}_{\mathbb{C}},$ we let $\tilde{Z}$ denote the
left-invariant vector field on $K_{\mathbb{C}}$ whose value at the identity is
$X$.
\end{definition}

\begin{remark}
\label{rem.elli} Given $s>0$ and $\tau=t+iu\in\mathbb{C}_{+}$, from
\eqref{e.def.Delta.s.tau}, it is not difficult to show that the operator
$\Delta_{s,\tau}$ is elliptic if and only if
\begin{equation}
\alpha\left(  s,\tau\right)  :=\det\left[
\begin{array}
[c]{cc}%
s-t/2 & -u/2\\
-u/2 & t/2
\end{array}
\right]  =\frac{1}{4}(2st-t^{2}-u^{2})>0. \label{e.alpha}%
\end{equation}
This can be written equivalently as
\begin{equation}
2s>t+u^{2}/t \label{e.s.t.u}%
\end{equation}
or, more succinctly, as $\tau\in\mathbb{D}(s,s)$ (the disk of radius $s$,
centered at $s$). Further notice that $\mathbb{D}(s,s) \uparrow\mathbb{C}_{+}$
as $s\uparrow\infty$.
\end{remark}

If the positivity conditions in Remark \ref{rem.elli} hold, then there exists a heat
kernel density $\mu_{s,\tau}\in C^{\infty}(K_{\mathbb{C}},(0,\infty))$ such
that
\[
\left(  e^{\Delta_{s,\tau}/2}f\right)  (w)=\int_{K_{\mathbb{C}}}\mu_{s,\tau
}(w^{-1}z)\,f(z)\,dz\quad\text{ for all }\;f\in L^{2}(K_{\mathbb{C}}).
\]
We are now prepared to state the main theorem of this paper.

\begin{theorem}
[Complex-time Segal--Bargmann transform]\label{t.main} Let $K$ be a connected,
compact-type Lie group. For $s>0$ and $\tau\in\mathbb{D}(s,s)$, $L^{2}%
(K,\rho_{s}) \subset\mathcal{D}(B_{\tau})$; i.e.,\ $B_{\tau}f$ is holomorphic
on $K_{\mathbb{C}}$ for each $f\in L^{2}(K,\rho_{s})$. The image of $B_{\tau}$
on this domain is $B_{\tau}\left(  L^{2}(K,\rho_{s})\right)  =\mathcal{H}%
L^{2}(K_{\mathbb{C}},\mu_{s,\tau})$. Moreover,
\[
B_{s,\tau}:=B_{\tau}|_{L^{2}(K,\rho_{s})}
\]
is a unitary isomorphism from $L^{2}(K,\rho_{s})$ onto $\mathcal{H}%
L^{2}(K_{\mathbb{C}},\mu_{s,\tau})$.
\end{theorem}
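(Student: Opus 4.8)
The plan is to prove the three assertions — that $L^{2}(K,\rho_{s})\subset\D(B_{\tau})$, that $B_{s,\tau}$ is isometric, and that it is onto $\mathcal{H}L^{2}(K_{\C},\mu_{s,\tau})$ — by first reducing to the compact and Euclidean factors and then verifying everything on a Peter--Weyl basis. By the structure theory of compact-type groups, a connected such $K$ is covered by $K_{0}\times\R^{n}$ with $K_{0}$ compact and simply connected, and both the heat kernels $\rho_{s},\mu_{s,\tau}$ and the transform $B_{\tau}$ factor over this product. On the $\R^{n}$ factor the statement is the explicit complex-time Gaussian computation recorded in \eqref{muRk}, so the essential case is $K$ compact. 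There I would use that the matrix entries $\{\pi_{ij}\}$ of the irreducible representations span a dense subspace of $L^{2}(K,\rho_{s})$ (Peter--Weyl, since $\rho_{s}$ is a strictly positive continuous density), so that it suffices to (a) compute $B_{\tau}$ on matrix entries, (b) verify the full sesquilinear identity $\langle B_{\tau}\pi_{ij},B_{\tau}\sigma_{kl}\rangle_{\mu_{s,\tau}}=\langle\pi_{ij},\sigma_{kl}\rangle_{\rho_{s}}$, and (c) establish surjectivity.

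For (a), expanding $\rho_{\C}(\tau,\cdot)=\sum_{\pi}d_{\pi}e^{-\tau\lambda_{\pi}/2}\chi_{\pi}$ (the analytic continuation from Theorem \ref{t.complex.time.heat.kernel} of the character expansion of $\rho_{t}$, with $\lambda_{\pi}\ge 0$ the Casimir eigenvalue) and applying Schur orthogonality to \eqref{e.SB.Ctime} yields $B_{\tau}\pi_{ij}=e^{-\tau\lambda_{\pi}/2}\pi^{\C}_{ij}$, where $\pi^{\C}_{ij}$ is the holomorphic extension of $\pi_{ij}$ to $K_{\C}$. This is consistent with the observation that on holomorphic functions $\widetilde{Y}_{j}=\widetilde{JX_{j}}$ acts as $i\widetilde{X}_{j}$, whence by Definition \ref{d.Lap.rsu} the operator $\Delta_{s,\tau}$ restricts to $(s-\tau)\sum_{j}\widetilde{X}_{j}^{2}$ on $\mathcal{H}(K_{\C})$.

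The heart of the argument is (b). Writing the inner product as a heat-semigroup value, $\langle B_{\tau}\pi_{ij},B_{\tau}\sigma_{kl}\rangle_{\mu_{s,\tau}}=e^{-\bar\tau\lambda_{\pi}/2-\tau\lambda_{\sigma}/2}\,\big[e^{\frac12\bar\Delta_{s,\tau}}(\overline{\pi^{\C}}\otimes\sigma^{\C})\big](e)$, I would compute the action of $\Delta_{s,\tau}$ on the matrix-valued function $\Psi=\overline{\pi^{\C}}\otimes\sigma^{\C}$. Using that $\widetilde{X}_{j}$ is a real left-invariant field (so it commutes with conjugation) and that $\widetilde{Y}_{j}$ acts as $-i\widetilde{X}_{j}$ on antiholomorphic and $+i\widetilde{X}_{j}$ on holomorphic functions, one finds that $\Delta_{s,\tau}\Psi=\Psi\,M$ is right multiplication by the constant matrix $M=-\lambda_{\pi}(s-\bar\tau)-\lambda_{\sigma}(s-\tau)+2s\,C_{\pi\sigma}$, where $C_{\pi\sigma}=\sum_{j}\overline{d\pi(X_{j})}\otimes d\sigma(X_{j})$. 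Since $\Psi(e)=I$, exponentiating gives $\int_{K_{\C}}\overline{\pi^{\C}}\otimes\sigma^{\C}\,\mu_{s,\tau}\,dz=e^{M/2}$, and the prefactor $e^{-\bar\tau\lambda_{\pi}/2-\tau\lambda_{\sigma}/2}$ cancels all $\tau$-dependence, leaving $e^{-s\lambda_{\pi}/2}e^{-s\lambda_{\sigma}/2}e^{sC_{\pi\sigma}}$. The identical computation run on $K$ with the bi-invariant heat kernel shows this equals $\langle\pi_{ij},\sigma_{kl}\rangle_{\rho_{s}}$, proving the isometry on the dense span of matrix entries. I expect the genuine technical point here to be justifying that $e^{\frac12\bar\Delta_{s,\tau}}$ acts on the exponentially growing functions $\Psi$ by the naive finite-dimensional exponential; this is exactly where the ellipticity hypothesis $\tau\in\mathbb{D}(s,s)$ (Remark \ref{rem.elli}) is indispensable, since it is what guarantees the Gaussian decay of $\mu_{s,\tau}$ needed for the integrals to converge and for $\mu_{s,\tau}$ to exist at all.

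It then remains to pass from matrix entries to all of $L^{2}(K,\rho_{s})$ and to prove surjectivity. For the domain inclusion and holomorphy (that $L^{2}(K,\rho_{s})\subset\D(B_{\tau})$), I would bound $|\rho_{\C}(\tau,zk^{-1})|$ by a $\rho_{s}$-square-integrable function of $k$, locally uniformly in $z$, so that Cauchy--Schwarz gives \eqref{e.integrable} and differentiation under the integral gives holomorphy; combined with the isometry and the continuity of pointwise evaluation (Remark \ref{pointwise.rem}), a limiting argument identifies the isometric extension of $B_{s,\tau}$ with the integral formula on all of $L^{2}(K,\rho_{s})$. Because an isometry has closed range, surjectivity reduces to showing that the holomorphic extensions $\{\pi^{\C}_{ij}\}$ are dense in $\mathcal{H}L^{2}(K_{\C},\mu_{s,\tau})$; I would prove this using that representative functions are dense in $\mathcal{O}(K_{\C})$ uniformly on compact sets (as $K_{\C}$ is Stein) and upgrading uniform-on-compacts approximation to $L^{2}(\mu_{s,\tau})$ approximation via the Gaussian tail of $\mu_{s,\tau}$, or equivalently by showing that any $G\in\mathcal{H}L^{2}(\mu_{s,\tau})$ orthogonal to every $\pi^{\C}_{ij}$ must vanish. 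This density/surjectivity step is the part I expect to require the most care.
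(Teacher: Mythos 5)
Your architecture is essentially the paper's (reduce to compact and Euclidean factors, compute on Peter--Weyl matrix entries, prove an exact isometry there, extend by density, and reduce surjectivity to density of holomorphic matrix entries in $\mathcal{H}L^{2}(K_{\C},\mu_{s,\tau})$), and your core computation (b) is correct: your identity $\Delta_{s,\tau}\Psi=\Psi M$ with $M=-\lambda_{\pi}(s-\bar\tau)-\lambda_{\sigma}(s-\tau)+2s\,C_{\pi\sigma}$ is precisely the matrix-entry incarnation of the paper's key operator identity $s\Delta_{\mathfrak{k}}=\Delta_{s,\tau}+\tau\partial^{2}+\bar{\tau}\bar{\partial}^{2}$ (Lemma \ref{deltaKsum.lem}), applied to $\Psi=\bar{F}G$; and the technical point you flag --- that $e^{\frac12\bar\Delta_{s,\tau}}$ acts on $\Psi$ by the finite-dimensional exponential --- is resolved exactly as you suspect, by Proposition \ref{p.heat.op.on.m.e.} (Langlands' theorem), which applies because $\tau\in\mathbb{D}(s,s)$ makes $\Delta_{s,\tau}$ elliptic. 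Two repairable inaccuracies: the structure theorem gives an \emph{isomorphism} $K\cong K_{0}\times\R^{d}$ (Proposition \ref{p.compact.type.decomp}), not merely a simply connected cover --- heat kernels and $B_{\tau}$ do not factor through covers, since the kernel on a quotient is a sum over the deck group --- and your termwise Schur-orthogonality evaluation of $B_{\tau}\pi_{ij}$ via the character expansion of $\rho_{\C}(\tau,\cdot)$ needs the locally uniform absolute-convergence estimates of \cite{Hall1994} (which the paper deliberately avoids, constructing $\rho_{\C}$ from the already-built unitary map instead); a cheaper fix is to prove $B_{t}\pi_{ij}=e^{-t\lambda_{\pi}/2}\pi^{\C}_{ij}$ for real $t\in(0,2s)$ directly from Proposition \ref{p.heat.op.on.m.e.} and then analytically continue in $\tau$, both sides being holomorphic.

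The genuine gap is your surjectivity step. Uniform-on-compacts approximation of $F\in\mathcal{H}L^{2}(K_{\C},\mu_{s,\tau})$ by representative functions does not upgrade to $L^{2}(\mu_{s,\tau})$ approximation, Gaussian tails notwithstanding: nothing controls $\int_{K_{\C}\setminus E}|F_{n}|^{2}\,d\mu_{s,\tau}$, and Oka--Weil/Runge-type approximants typically blow up off the compact set. The paper itself warns that the obvious alternative fails too: for $u\neq0$ the Gaussian $\mu_{s,\tau}$ on $\C^{d}$ is not invariant under $z\mapsto e^{i\theta}z$, so monomials of different degrees need not be orthogonal and Bargmann's argument \cite{Bargmann1961} does not apply. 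The actual mechanism (Theorem \ref{t.HL^2.density}, following \cite{Hall1994}) is an invariance trick your sketch is missing: by the Averaging Theorem \ref{t.averaging} and Lemma \ref{l.nu.t.indep.s.u}, $\mu_{s,\tau}$ is equivalent, with two-sided constants, to the right-$K$-invariant averaged kernel $\nu_{t}$, so one may work in $L^{2}(K_{\C},\nu_{t})$; there one expands $F|_{K}$ in its Peter--Weyl series, extends termwise holomorphically, and uses the $K$-invariance of $\nu_{t}$ to get orthogonality of the isotypic terms in $L^{2}(E_{n},\nu_{t})$ for a $K$-bi-invariant compact exhaustion $E_{n}$; uniform convergence on each $E_{n}$ plus monotone convergence in $n$ then gives $\|F\|^{2}=\sum_{\pi}\|f_{\pi_{\C},A_{\pi}}\|^{2}$, i.e.\ $L^{2}$ convergence of the series. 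Your fallback (show $G\perp\pi^{\C}_{ij}$ for all $\pi,i,j$ forces $G=0$) is just a reformulation of the same density statement and is no easier without this device. Finally, two smaller points in the gluing you elide: density of holomorphic polynomials in the Gaussian factor requires \cite[Proposition 3.5]{Driver1999}, and the identification of $\mathcal{H}L^{2}$ of the product with the tensor product of the two $\mathcal{H}L^{2}$ spaces (onto, not just into) needs the argument from the Appendix of \cite{Hall2001b}.
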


Theorem \ref{t.main} is proved in Section \ref{Section SBT}. The $\tau
=t\in\mathbb{R}$ case of Theorem \ref{largeS.thm} was established in
\cite[Theorem 5.3]{Driver1999}. (See also \cite[Theorem 2.1]{Hall2001b}.)

\begin{remark}
\label{rem.cond}The condition in \cite{Driver1999,Hall2001b} for the
two-parameter Segal--Bargmann transform $B_{s,t}$ to be a well-defined unitary
map was $t>0$ and $s>t/2$, or equivalently $t\in(0,2s)$. It is therefore
natural that, in complexifying $t$ to $\tau$, the optimal condition is that
$\tau\in\mathbb{D}(s,s)$, the most symmetric region whose intersection with
$\mathbb{R}$ is the interval $(0,2s)$.
\end{remark}

In the case that the group $K$ is compact, there is a limiting $s\to\infty$
variant (Theorem \ref{largeS.thm}) of Theorem \ref{t.main}. To state this
variant, as in \cite{Hall1994}, we first introduce a one parameter family of
\textquotedblleft$K$-averaged heat kernels.\textquotedblright

\begin{definition}
\label{d.K-av.heat.kernel}For $t>0$, define the $K$\textbf{-averaged heat
kernel} $\nu_{t}$ on $K_{\mathbb{C}}$ by
\[
\nu_{t}(z)=\int_{K}\mu_{t,t}(zk)\,dk\quad\text{ for all }\;z\in K_{\mathbb{C}}%
\]
where $dk$ denotes the Haar probability measure on $K$.
\end{definition}

\noindent In fact, one can replace $\mu_{t,t}$ by $\mu_{s,\tau}$ for any
$\tau\in\mathbb{D}(s,s)$ in the above integral, and the resulting $K$-averaged
density $\nu_{t}$ is the same: it only depends on $t=\mathrm{Re}\,\tau$; see
Proposition \ref{nutIndep.prop}.

\begin{theorem}
[Large-$s$ limit]\label{largeS.thm} For all $s>0$ and $\tau=t+iu\in
\mathbb{D}(s,s),$ we have $L^{2}(K)=L^{2}(K,\rho_{s})$ and $L^{2}%
(K_{\mathbb{C}},\mu_{s,\tau})=L^{2}(K_{\mathbb{C}},\nu_{t})$ (equalities as
sets). Furthermore, for all $f\in L^{2}(K)$ and all $F\in L^{2}(K_{\mathbb{C}%
},\nu_{t}),$ we have%
\begin{align*}
\lim_{s\rightarrow\infty}\left\Vert f\right\Vert _{L^{2}(K,\rho_{s})}  &
=\left\Vert f\right\Vert _{L^{2}(K)}\\
\lim_{s\rightarrow\infty}\left\Vert F\right\Vert _{L^{2}(K_{\mathbb{C}}%
,\mu_{s,\tau})}  &  =\left\Vert F\right\Vert _{L^{2}(K_{\mathbb{C}},\nu_{t})}.
\end{align*}
It follows that $B_{\infty,\tau}:=\left.  B_{\tau}\right\vert _{L^{2}(K)}$ is
a unitary isomorphism from $L^{2}(K)$ onto $\mathcal{H}L^{2}(K_{\mathbb{C}%
},\nu_{t}).$
\end{theorem}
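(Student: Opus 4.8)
The plan is to prove the two norm-convergence statements and the set equalities, and then deduce the unitarity of $B_{\infty,\tau}$ by a limiting argument from the unitarity of $B_{s,\tau}$ (Theorem \ref{t.main}). Since $K$ is compact, $\rho_s$ is a smooth strictly positive density on $K$ and, as $s\to\infty$, the heat kernel $\rho_s$ should converge uniformly to the constant function $1$ (equivalently, to the Haar probability density), because $e^{\frac{s}{2}\bar\Delta_K}$ projects onto the constants in the $s\to\infty$ limit: in terms of the spectral decomposition of $\Delta_K$ on the compact group $K$, every nonzero eigenvalue is strictly negative, so all higher Peter--Weyl components decay like $e^{-s\lambda/2}$ and only the trivial representation (the constants) survives. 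First I would make this precise, establishing that $\rho_s\to 1$ uniformly on $K$; then for fixed $f\in L^2(K)$ (note $L^2(K,\rho_s)=L^2(K)$ as sets, since $\rho_s$ is bounded above and below away from $0$ for each fixed $s$, and in the limit the densities are uniformly comparable to $1$), dominated convergence gives
\[
\lim_{s\to\infty}\|f\|_{L^2(K,\rho_s)}^2=\lim_{s\to\infty}\int_K |f|^2\rho_s\,dk=\int_K|f|^2\,dk=\|f\|_{L^2(K)}^2 .
\]

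For the range-space statement I would carry out the analogous computation on $K_{\mathbb C}$, but the key structural input is the definition of $\nu_t$ as the $K$-average of $\mu_{s,\tau}$ together with the stated fact (Lemma \ref{l.nu.t.indep.s.u}) that this average depends only on $t$. The idea is that the measure $\mu_{s,\tau}$ is spread out in the ``imaginary'' (noncompact) directions governed by $t$ but concentrates onto $K\subset K_{\mathbb C}$ in the remaining directions as $s\to\infty$, since the coefficient $s-t/2$ of the $\widetilde X_j^2$ terms in $\Delta_{s,\tau}$ blows up. Concretely, using the polar-type or $KAK$-style factorization of $K_{\mathbb C}$ and the explicit Gaussian-like structure of $\mu_{s,\tau}$ in the $\widetilde X_j$-directions, I expect that for a holomorphic $F$, the weight $\mu_{s,\tau}(z)\,dz$ converges (after the appropriate normalization matching $\nu_t$) to the averaged measure concentrated along $K$, yielding
\[
\lim_{s\to\infty}\int_{K_{\mathbb C}}|F|^2\,\mu_{s,\tau}\,dz=\int_{K_{\mathbb C}}|F|^2\,\nu_t\,dz .
\]
This is where I would lean on the averaging identity: since averaging over $K$ leaves holomorphic functions' $L^2$-norms controllable and collapses the $s$-dependence, the comparison reduces to the one-dimensional (or rank-one) Gaussian computations already implicit in Remark \ref{rem.elli} and the construction of $\mu_{s,\tau}$.

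Having established both norm limits, the final assertion follows formally. For $f\in L^2(K)$, Theorem \ref{t.main} gives $\|B_\tau f\|_{L^2(K_{\mathbb C},\mu_{s,\tau})}=\|f\|_{L^2(K,\rho_s)}$ for every $s$ with $\tau\in\mathbb D(s,s)$; letting $s\to\infty$ (valid since $\mathbb D(s,s)\uparrow\mathbb C_+$, so every fixed $\tau$ eventually lies in the disk) and applying the two limits yields $\|B_\tau f\|_{L^2(K_{\mathbb C},\nu_t)}=\|f\|_{L^2(K)}$, so $B_{\infty,\tau}$ is isometric into $\mathcal H L^2(K_{\mathbb C},\nu_t)$. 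Surjectivity I would obtain either by the same limiting bookkeeping applied to the inverse maps $B_{s,\tau}^{-1}$, or more cleanly by noting that the image contains a dense set (e.g.\ matrix coefficients / the image of trigonometric polynomials on $K$) and is closed by isometry.

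The step I expect to be the main obstacle is the second norm limit, $\|F\|_{L^2(\mu_{s,\tau})}\to\|F\|_{L^2(\nu_t)}$, and in particular justifying the passage to the limit uniformly in $F$ across the infinite-dimensional holomorphic space. The subtlety is that $\mu_{s,\tau}$ and $\nu_t$ live on the noncompact manifold $K_{\mathbb C}$, so pointwise convergence of densities is not enough; I would need a domination or monotonicity argument (controlling $|F(z)|$ via the reproducing-kernel bound from Remark \ref{pointwise.rem}, whose norm is locally bounded in $z$) to interchange the limit with the integral. The averaging definition of $\nu_t$ is precisely the device that tames this: it replaces the delicate $s$-dependent concentration in the compact directions by a fixed measure, reducing the problem to the convergence of Gaussian integrals in finitely many ``real'' directions, which is routine once set up correctly.
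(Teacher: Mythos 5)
Your outline gets the architecture right (spectral-gap proof that $\rho_{s}\to 1$ uniformly, which is exactly the paper's Lemma \ref{compactRhoSinfinity.lem}; the easy domain-side limit; the limiting deduction of isometry from Theorem \ref{t.main}), but the step you yourself flag as the main obstacle---the limit $\Vert F\Vert_{L^{2}(K_{\mathbb{C}},\mu_{s,\tau})}\to\Vert F\Vert_{L^{2}(K_{\mathbb{C}},\nu_{t})}$---is a genuine gap: neither of your proposed mechanisms closes it. There is no explicit Gaussian formula for $\mu_{s,\tau}$ on a general complexified group (only on the $\mathbb{C}^{d}$ factor), so the ``$KAK$/Gaussian concentration'' computation is not available; and the domination route fails because you have no $s$-independent integrable dominating function: the equivalence $\mu_{s,\tau}\asymp_{C(s,\tau)}\nu_{t}$ from Theorem \ref{t.averaging} and Remark \ref{rk.mu.s.tau-mu.t.t.equiv} gives set equality of the $L^{2}$ spaces, but its constants do not tend to $1$, and reproducing-kernel bounds control $|F(z)|$ only locally on the noncompact manifold $K_{\mathbb{C}}$. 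You have not even established pointwise convergence of $\mu_{s,\tau}$ to anything, so there is nothing for dominated convergence to act on.

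The paper's device, which your proposal is missing, is a quantitative two-sided sandwich (Corollary \ref{KcRhoSinfinity.lem}): fix $s_{0}$ with $\tau\in\mathbb{D}(s_{0},s_{0})$, write $\Delta_{s,\tau}=(s-s_{0})\Delta_{\mathfrak{k}}+\Delta_{s_{0},\tau}$, and use the commutation of the two heat semigroups (Theorem \ref{the.B.1.19}, as in the proof of Theorem \ref{t.averaging}) to obtain the convolution factorization
\begin{equation*}
\mu_{s,\tau}(z)=\int_{K}\mu_{s_{0},\tau}(zk^{-1})\,\rho_{s-s_{0}}(k)\,dk .
\end{equation*}
Bounding $\rho_{s-s_{0}}$ by $\beta_{s}:=\inf_{K}\rho_{s-s_{0}}$ and $\gamma_{s}:=\sup_{K}\rho_{s-s_{0}}$, and recalling that the $K$-average of $\mu_{s_{0},\tau}$ is $\nu_{t}$ (Lemma \ref{l.nu.t.indep.s.u}), yields $\beta_{s}\,\nu_{t}\leq\mu_{s,\tau}\leq\gamma_{s}\,\nu_{t}$ pointwise on $K_{\mathbb{C}}$, with $\beta_{s},\gamma_{s}\to 1$ by the uniform convergence $\rho_{s}\to 1$ that you already proved. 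Integrating $|F|^{2}$ against this sandwich gives $\beta_{s}\Vert F\Vert_{L^{2}(\nu_{t})}^{2}\leq\Vert F\Vert_{L^{2}(\mu_{s,\tau})}^{2}\leq\gamma_{s}\Vert F\Vert_{L^{2}(\nu_{t})}^{2}$ \emph{uniformly in} $F$, which delivers the set equality and the norm limit in one stroke, with no interchange of limit and integral to justify. It also simplifies your surjectivity step: since $B_{\tau}\left(L^{2}(K)\right)=\mathcal{H}L^{2}(K_{\mathbb{C}},\mu_{s,\tau})$ by Theorem \ref{t.main}, and this space equals $\mathcal{H}L^{2}(K_{\mathbb{C}},\nu_{t})$ as a set, surjectivity of $B_{\infty,\tau}$ is immediate---no density-plus-closed-range argument is needed.
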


This theorem is proved in Section \ref{section s-->infinity} below.

\begin{remark}
The unitarity of the map $B_{\infty,\tau}$ was previously established in
\cite[Prop. 2.3]{Florentino2003}. Indeed, this unitarity result follows easily
from the unitarity of the \textquotedblleft$C$-version\textquotedblright%
\ Segal--Bargmann transform in \cite{Hall1994} and the unitarity of the
operator $e^{iu\Delta/2}:L^{2}(K)\rightarrow L^{2}(K).$ The significance of
Theorem \ref{largeS.thm} is that the unitary map $B_{\infty,\tau}$ is, in a
strong sense, the $s\rightarrow\infty$ limit of the unitary map $B_{s,\tau}.$
\end{remark}

\subsection{An Outline of the Proof\label{proofsketch.sec}}

We now give a heuristic proof of the isometricity portion of Theorem
\ref{t.main}, in the Euclidean case $K=\mathbb{R}^{d}$, for motivation. The
argument is a generalization of the method used in the appendix of
\cite{Hall2001a}. By \eqref{e.SB.Ctime}, if we restrict to real time
$\tau=t>0$ and look at the transform $(B_{s,t}f)(x)$ at a point $x\in
\mathbb{R}^{d}$, we simply have $(B_{s,t}f)(x)=\int_{\mathbb{R}^{d}}\rho
_{t}(x-y)f(y)\,dy$; in other words, restricted to real time and $K$,
$B_{s,t}f$ is just the heat operator applied to $f$, $B_{s,t}f=e^{\frac{t}%
{2}\Delta}f$ where $\Delta$ is the standard Laplacian on $\mathbb{R}^{d}$.
Therefore, in general the transform can be described as \textquotedblleft
apply the heat operator, then analytically continue in space and
time\textquotedblright. But if the function $f$ itself already possesses a
holomorphic extension $f_{\mathbb{C}}$ to all of $\mathbb{C}^{d}$ (e.g.,\ if
$f$ is a polynomial), then at least informally we should have
\[
B_{s,\tau}f=e^{\frac{\tau}{2}\Delta}f_{\mathbb{C}},
\]
where now $\Delta$ (the sum of squares of the $\mathbb{R}^{d}$-derivatives) is
acting on functions on $\mathbb{C}^{d}$.

Let $F=B_{s,\tau}f$; we need to compute $|F|^{2}=F\bar{F}$. Since
$f_{\mathbb{C}}$ is holomorphic, we have $\frac{\partial}{\partial x_{j}%
}f_{\mathbb{C}}=\frac{\partial}{\partial z_{j}}f_{\mathbb{C}}$, and so $\Delta
f_{\mathbb{C}}=\sum_{j=1}^{d}\frac{\partial^{2}}{\partial z_{j}^{2}%
}f_{\mathbb{C}}=:\partial^{2}f_{\mathbb{C}}$; similarly $\Delta\bar
{f}_{\mathbb{C}}=\sum_{j=1}^{d}\frac{\partial^{2}}{\partial\bar{z}_{j}^{2}%
}\bar{f}_{\mathbb{C}}=:\bar{\partial}^{2}\bar{f}_{\mathbb{C}}$. Again, since
$f_{\mathbb{C}}$ is holomorphic and $\bar{f}_{\mathbb{C}}$ is antiholomorphic,
$\partial^{2}\bar{f}_{\mathbb{C}}=0=\bar{\partial}^{2}f_{\mathbb{C}}$; so we
have
\begin{equation}
(F\bar{F})=(e^{\frac{\tau}{2}\partial^{2}}f_{\mathbb{C}})(e^{\frac{\bar{\tau}%
}{2}\bar{\partial}^{2}}\bar{f}_{\mathbb{C}})=e^{(\frac{\tau}{2}\partial
^{2}+\frac{\bar{\tau}}{2}\bar{\partial}^{2})}f_{\mathbb{C}}\bar{f}%
_{\mathbb{C}}. \label{e.heuristic.proof.1}%
\end{equation}

Now, we measure $f$ in $L^{2}(\mathbb{R}^{d},\rho_{s})$; setting $x=0$ in the
(additive form of) \eqref{e.d.Heat.kernel} defining the heat operator, we can
compute
\begin{equation}
\Vert f\Vert_{L^{2}(\mathbb{R}^{d},\rho_{s})}^{2}=\int_{\mathbb{R}^{d}}%
\rho_{s}(y)|f(y)|^{2}\,dy=\left(  e^{\frac{s}{2}\Delta}|f|^{2}\right)
(0)=\left(  e^{\frac{s}{2}\Delta}|f_{\mathbb{C}}|^{2}\right)  (0).
\label{e.heuristic.proof.2}%
\end{equation}
Similarly, we measure $F$ in $L^{2}(\mathbb{C}^{d},\mu_{s,\tau})$, meaning
\begin{equation}
\Vert F\Vert_{L^{2}(\mathbb{C}^{d},\mu_{s,\tau})}^{2}=\left(  e^{\frac{1}%
{2}\Delta_{s,\tau}}|F|^{2}\right)  (0). \label{e.heuristic.proof.3}%
\end{equation}
Combining \eqref{e.heuristic.proof.1} and \eqref{e.heuristic.proof.3}, and
commuting partial derivatives to combine the exponentials, we therefore have
\begin{equation}
\Vert B_{s,\tau}f\Vert_{L^{2}(\mathbb{C}^{d},\mu_{s,\tau})}^{2}=\left(
e^{\frac{1}{2}\Delta_{s,\tau}+\frac{\tau}{2}\partial^{2}+\frac{\bar{\tau}}%
{2}\bar{\partial}^{2}}|f_{\mathbb{C}}|^{2}\right)  (0).
\label{e.heuristic.proof.4}%
\end{equation}
Comparing \eqref{e.heuristic.proof.2} with \eqref{e.heuristic.proof.4}, we see
that to prove the isometry in Theorem \ref{t.main}, it suffices to have
\[
s\Delta=\Delta_{s,\tau}+\tau\partial^{2}+\bar{\tau}\bar{\partial}^{2}.
\]

Expressing the operators $\partial^{2}$ and $\bar{\partial}^{2}$ in terms of
real partial derivatives, we can then solve for $\Delta_{s,\tau}$; this is how
\eqref{e.def.Delta.s.tau} arises. In the present Euclidean setting, we have
\begin{equation}
\Delta_{s,\tau}=\sum_{j=1}^{d}\left[  \left(  s-\frac{t}{2}\right)
\frac{\partial^{2}}{\partial x_{j}^{2}}+\frac{t}{2}\frac{\partial^{2}%
}{\partial y_{j}^{2}}-u\frac{\partial^{2}}{\partial x_{j}\partial y_{j}%
}\right]  . \label{e.dstau}%
\end{equation}
As in Remark \ref{rem.elli}, it is easily verified that $\Delta_{s,\tau}$ is
elliptic precisely when $\tau\in\mathbb{D}\left(  s,s\right)  $. Moreover, by
a standard Fourier transform argument, one shows that $e^{\frac{1}{2}%
\bar{\Delta}_{s,\tau}}f=f\ast\mu_{s,\tau}$ where%
\begin{equation}
\mu_{s,\tau}(z)=(2\pi\sqrt{\alpha})^{-d}\left(  -\frac{t/2}{2\alpha}\left\vert
x\right\vert ^{2}-\frac{s-t/2}{2\alpha}\left\vert y\right\vert ^{2}-\frac
{u}{2\alpha}x\cdot y\right)  , \label{muRk}%
\end{equation}
where $z=x+iy\in\mathbb{R}^{d}+i\mathbb{R}^{d}=\mathbb{C}^{d}$, and
$\alpha:=\alpha\left(  s,\tau\right)  $ as in Eq. (\ref{e.alpha}).

When $u=0$, the density $\mu_{s,\tau}$ becomes a product of a Gaussian in the
$x$ variable and a Gaussian in the $y$ variable, but with typically unequal
variances. If $u=0$ and $s=t$, the formula for $\mu_{s,\tau}$ reduces to
\[
\mu_{t,t}(z)=(\pi t)^{-d}e^{-\left\vert z\right\vert ^{2}/t},
\]
which is the density for the standard Segal--Bargmann space over
$\mathbb{C}^{d}$.

For a general Lie group $K$ of compact type, we replace the partial
derivatives in the preceding argument with left-invariant vector fields. The
heuristic argument then goes through unchanged, \textit{except} that we must
remember that left-invariant vector fields do not, in general, commute. Thus,
we must also verify that the particular operators involved in the calculation
do, in fact, commute, allowing us to combine the exponents as above. For this,
we need to use an inner product on the Lie algebra of $K$ that is
Ad-invariant; this is the reason for the assumption that $K$ be of compact type.

Most of this paper is devoted to making the above argument rigorous. The key
is to introduce a dense subspace (consisting of matrix entries; see\ Section
\ref{Section Matrix Entries}) of the domain Hilbert space on which integration
against the heat kernel can be computed rigorously by a power series in the
relevant Laplacian. This argument can be found in Section \ref{Section SBT}.

The operator $\Delta_{s,\tau}$ was the starting point for the current
investigation. It is the Laplacian for a left-invariant Riemannian metric on
$K_{\mathbb{C}}$ for which the corresponding inner product on the Lie algebra
is invariant under the adjoint action of $K$. While the Lie algebra of the
complexified Lie group $K_{\mathbb{C}}$ does not possess a fully $\mathrm{Ad}%
$-invariant inner product (unless $K$ is commutative), it does possess many
inner products that are invariant under the adjoint action of $K$. These are
the most natural from the perspective of diffusion processes, particularly in
high dimension (cf.\ \cite{Kemp2015b}). In fact, there is a natural three
(real) parameter family of $\mathrm{Ad}(K)$-invariant inner products on
$\mathrm{Lie}(K_{\mathbb{C}})$ (see \eqref{e.(a,b,c)->(s,t,u)} for the
relation to the Segal--Bargmann transform parameters $s$ and $\tau=t+iu$). In
the case that $K$ is simple, this is a complete characterization of all such
invariant inner products; this is the statement of Theorem
\ref{t.Ad(K)-inv.inn.prod} below. It was this fact that led the authors
backward to discover the complex-time Segal--Bargmann transform, which is
therefore a natural completion of the versions of the transform previously
introduced by Segal, Bargmann, and the first two authors of the present paper.

\subsection{Motivation\label{section motivation}}

In the case $K=\mathrm{U}(n)$ and $K_{\mathbb{C}}=\mathrm{GL}(n;\mathbb{C}),$
we may give one motivation for the complex-time Segal--Bargmann transform as
follows: choosing matrices at random from $\mathrm{GL}(n;\mathbb{C})$ with
distribution $\mu_{s,\tau}$ is an interesting random matrix model and the
transform is a tool for studying that model. We now elaborate on this
statement, starting by thinking of the heat kernel measure on $\mathrm{GL}%
(n;\mathbb{C})$ as giving a random matrix model. The heat kernel measure
$\mu_{s,\tau}(g)~dg$ on $\mathrm{GL}(n;\mathbb{C})$ is just the group analog
of a Gaussian measure on its Lie algebra, the space of all $n\times n$
matrices. In the two-parameter case (i.e., with $\tau=t\in\mathbb{R}$), the
Gaussian measure is a scaled version of the Ginibre ensemble. In the large-$n$
limit, the eigenvalues of a random matrix chosen according to this Gaussian
measure are uniformly distributed on an ellipse with axes lying along the real
and imaginary axes. One can certainly add a third parameter to the Gaussian
measure, but one does not really get anything new by doing so: The resulting
random matrix is just the two-parameter case multiplied by a fixed complex
number. Thus, the limiting eigenvalue distribution is uniform over an ellipse
in $\mathbb{C}$---but an ellipse that has been rotated so its axes no longer
lie along the real and imaginary axes.

For the heat kernel measure on $\mathrm{GL}(n;\mathbb{C}),$ the problem is
much richer. In the two-parameter case (i.e., with $\tau=t\in\mathbb{R}$), the
second and third authors have used \cite{Hall2018} the large-$n$
Segal--Bargmann transform developed in \cite{Biane1997b,Driver2013,Ho2016} to
identify the domain $\Sigma_{s,t}$ in $\mathbb{C}$ on which the
\textquotedblleft Brown measure\textquotedblright\ of the limiting object is
supported. We expect that this is the domain into which the eigenvalues of
random matrices chosen from $\mathrm{GL}(n;\mathbb{C})$ and distributed as
$\mu_{s,t}$ cluster in the $n\rightarrow\infty$ limit. In the case $s=t,$ the
authors then computed the Brown measure---not just its support---in
\cite{Driver2019}.

Already in the two-parameter case, the domains $\Sigma_{s,t}$ display an
interesting structure, changing from simply connected to doubly connected at
$s=4.$ If we then allow $\tau$ to be complex, the associated random matrix
model is no longer just a complex number times the two-parameter case. Thus
the domain into which the eigenvalues cluster will not be simply a rotation of
$\Sigma_{s,t}$. Rather, simulations indicated that the domain gets twisted
around in a much more complicated (and therefore interesting) way. The
large-$n$ limit of the complex-time Segal--Bargmann transform has already been
developed in \cite{chan2018}. We expect that this limiting transform will be
an important tool in studying the large-$n$ eigenvalue distribution of
$\mu_{s,\tau}$, in the same way that the large-$n$ limit of the two-parameter
transform was used in \cite{Hall2018}.

In the rest of this subsection, we provide motivation for considering the
complex-time transform for a fixed, finite-dimensional Lie group of compact
type. The Segal--Bargmann transform $(B_{\tau}f)(z)$ is computed by
integration of $f$ against the function
\begin{equation}
\chi_{\tau}^{z}(x):=\rho_{\mathbb{C}}(\tau,x^{-1}z). \label{coherentKc}%
\end{equation}
These functions may be thought of as \textquotedblleft coherent
states\textquotedblright\ on $K$. In the case $K=\mathbb{R}^{1},$ coherent
states are often defined as minimum uncertainty states, namely those giving
equality in the classic Heisenberg uncertainty principle. There is, however, a
stronger form of the uncertainty principle, due to Schr\"{o}dinger
\cite{Schrodinger1930}, which says that
\begin{equation}
\left(  \Delta_{\chi}X\right)  ^{2}(\Delta_{\chi}P)^{2}\geq\frac{\hbar^{2}}%
{4}+\left\vert \mathrm{Cov}_{\chi}(X,P)\right\vert ^{2},
\label{schrUncertainty}%
\end{equation}
where $\Delta_{\chi}X$ is the uncertainty of the observable $X$ in state
$\chi$, and
\[
\mathrm{Cov}_{\chi}(X,P):=\left\langle (XP+PX)/2\right\rangle _{\chi
}-\left\langle X\right\rangle _{\chi}\left\langle P\right\rangle _{\chi}%
\]
is the quantum covariance. (The classic Heisenberg principle omits the
covariance term on the right-hand side of \eqref{schrUncertainty}.)

States that give equality in (\ref{schrUncertainty}) are Gaussian wave
packets, but where the quadratic term in the exponent can be complex, as
follows:%
\begin{equation}
\chi(x)=C\exp\left\{iax^{2}-b(x-c)^{2}+idx\right\} \label{coherentComplexTime}%
\end{equation}
with $a,b,c,d\in\mathbb{R}$ and $b>0$. This class of states is actually more
natural than the usual ones with $a=0$, because the collection of states of
the form (\ref{coherentComplexTime}) is invariant under the metaplectic
representation; that is, the natural (projective) unitary action of the group
of symplectic linear transformations of $\mathbb{R}^{2}$.

If we specialize the states in (\ref{coherentKc}) to the $\mathbb{R}^{d}$
case, we find that they are Gaussian wave packets, and that if $\mathrm{Im}%
\,\tau\neq0$ then the quadratic part of the exponent is complex. We see, then,
that allowing the time-parameter in the Segal--Bargmann transform to be
complex amounts to considering a larger and more natural family of coherent
states. In the $\mathbb{R}^{d}$ case, unitary Segal--Bargmann-type transforms
using general Gaussian wave packets were constructed by J. Sj\"ostrand
\cite{Sjostrand1996} and L. H\"ormander \cite{Hormander1991}, with
applications to semiclassical analysis. In these works, it is essential to
allow the quadratic part of the exponent to be complex, in order to achieve
invariance of the theory under symplectic linear transformations.

In the $s\rightarrow\infty$ transform $B_{\infty,t+iu}$ of Theorem
\ref{largeS.thm}, the domain Hilbert space is $L^{2}(K)$. Since $e^{iu\Delta
/2}$ is a unitary map of $L^{2}(K)$ to itself, in this case it is possible to
derive the complex-time transform from the real one $B_{\infty,t}$ (denoted as
the $C$-version of the transform $C_{t}$ in \cite{Hall1994}) by the
decomposition $e^{\frac{1}{2}(t+iu)\Delta}=e^{t\Delta/2}e^{iu\Delta/2}$. This
possibility has been exploited, for example, in the papers
\cite{Florentino2002,Florentino2003} of C. Florentino, J. Mour\~{a}o, and J.
Nunes on the quantization of nonabelian theta functions on $\mathrm{SL}%
(n,\mathbb{C})=\mathrm{SU}(n)_{\mathbb{C}}$. The authors show that these
functions arise as the image of certain distributions on $\mathrm{SU}(n)$
under the heat operator, evaluated at a complex time, and use the
Segal--Bargmann transform in the complexification process. These papers, then,
show the utility of introducing a complex time-parameter into the
($C$-version) Segal--Bargmann transform. The present paper extends this
complex time-parameter to the two-parameter transform.

Meanwhile, the Segal--Bargmann transform for $K$ is related to the study of
complex structures on the cotangent bundle $T^{\ast}(K)$. There is a natural
one-parameter family of \textquotedblleft adapted complex
structures\textquotedblright\ on $T^{\ast}(K)$ arising from a general
construction of Guillemen--Stenzel \cite{Guillemin1991,Guillemin1992} and
Lempert--Sz\H{o}ke \cite{Lempert1991,Szoke1991}. Motivated by ideas of
Thiemann \cite{Thiemann1996}, the second author and W. Kirwin in
\cite{Hall2011} showed that these structures arise from the \textquotedblleft
imaginary-time geodesic flow\textquotedblright\ on $T^{\ast}(K)$. The
Segal--Bargmann transform can then be understood
\cite{Florentino2005,Florentino2006,Hall2002} as a quantum counterpart of the
construction in \cite{Hall2011}.

As observed in \cite{Lempert2012}, the adapted complex structures on $T^{\ast
}(K)$ extend to a two-parameter family, by including both a real and an
imaginary part to the time-parameter in the geodesic flow in \cite{Hall2011}.
The corresponding quantum construction has been done in \cite{Lempert2015} and
can be thought of as adding a complex parameter to the $C$-version of the
Segal--Bargmann transform for $K$. (Compare work of Kirwin and Wu
\cite{Kirwin2006} in the $\mathbb{R}^{d}$ case.) The present paper then
extends the complex-time transform to its most natural range, in which the
domain Hilbert space is taken to be $L^{2}$ of $K$ with respect to a heat
kernel measure.

Finally, we mention the paper \cite{Hall2008}, which shows that certain
operators on $L^{2}(K_{\mathbb{C}},\nu_{t})$ of the form $C_{t}AC_{t}^{-1}$,
where $A$ is an operator on $L^{2}(K)$, can be represented as Toeplitz
operators. Here $C_{t}$, for $t\in\mathbb{R}$, is the $C$-version
Segal--Bargmann transform, which coincides with the limiting transform
$B_{\infty,t}$ in Theorem \ref{largeS.thm}. Using the results of the present
paper, a similar analysis can be performed for operators of the form
$C_{t+iu}AC_{t+iu}^{-1}$, where $C_{t+iu}$ is the limiting transform
$B_{\infty,t+iu}$ in Theorem \ref{largeS.thm}.

\section{Compact-Type Lie Groups and their Complexifications\label{sec.ctlgtc}%
\label{Section Complexification}}

We now introduce the class of Lie groups in which we are interested: those of
compact type and their complexifications.

\begin{definition}
\label{d.Ad-inv} A connected Lie group $K$ with Lie algebra $\mathfrak{k}$ is
said to be of \textbf{compact type} if there exists an $\mathrm{Ad}$%
-$K$-invariant inner product on $\mathfrak{k},$ that is, an inner product such
that%
\[
\left\langle \mathrm{Ad}_{x}X,\mathrm{Ad}_{x}Y\right\rangle =\left\langle
X,Y\right\rangle ,\quad\forall x\in K,~X,Y\in\mathfrak{k}.
\]

\end{definition}

Clearly a commutative group is of compact type.\ Furthermore, every compact
group is of compact type, since any inner product on its Lie algebra can be
made Ad-invariant by averaging over the adjoint action. A key result says that
products of these two examples account for all Lie groups of compact type.

\begin{proposition}
[\cite{Milnor76}, Lemma 7.5]\label{p.compact.type.decomp} If $K$ is a
compact-type Lie group with a specified $\mathrm{Ad}$-invariant inner product,
then $K$ is isometrically isomorphic to a direct product group: $K\cong
K_{0}\times\mathbb{R}^{d}$ for some compact Lie group $K_{0}$ and some
non-negative integer $d$.
\end{proposition}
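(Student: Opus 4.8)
The plan is to reduce everything to the structure of $\mathfrak{k}$ equipped with the given $\mathrm{Ad}$-invariant inner product, and then lift the resulting orthogonal decomposition of $\mathfrak{k}$ to an internal direct-product decomposition of $K$ itself. The starting observation is that $\mathrm{Ad}$-invariance makes each $\mathrm{ad}_{X}$ skew-symmetric, i.e.\ $\langle[X,Y],W\rangle_{\mathfrak{k}}=-\langle Y,[X,W]\rangle_{\mathfrak{k}}$ (differentiate the invariance of the inner product along $e^{tX}$). From this one sees immediately that the orthogonal complement of any ideal is again an ideal: if $[\mathfrak{k},\mathfrak{a}]\subseteq\mathfrak{a}$, $Y\in\mathfrak{a}^{\perp}$, and $W\in\mathfrak{a}$, then $\langle[X,Y],W\rangle_{\mathfrak{k}}=-\langle Y,[X,W]\rangle_{\mathfrak{k}}=0$. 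Applying this to the center $\mathfrak{z}$ of $\mathfrak{k}$ (always an ideal), I obtain an orthogonal decomposition $\mathfrak{k}=\mathfrak{z}\oplus\mathfrak{s}$ into commuting ideals, with $\mathfrak{s}=\mathfrak{z}^{\perp}$.

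Next I would identify $\mathfrak{s}$ as a compact-type semisimple Lie algebra. Skew-symmetry of $\mathrm{ad}_{X}$ gives $B(X,X)=\mathrm{Tr}(\mathrm{ad}_{X}^{2})=-\mathrm{Tr}(\mathrm{ad}_{X}^{\mathsf{T}}\mathrm{ad}_{X})\le 0$ for the Killing form $B$, with equality precisely when $\mathrm{ad}_{X}=0$, i.e.\ when $X\in\mathfrak{z}$. Since $\mathfrak{s}$ is a direct-summand ideal commuting with $\mathfrak{z}$, its Killing form is the restriction of $B$, and the computation shows this restriction is negative definite on $\mathfrak{s}$. By Cartan's criterion $\mathfrak{s}$ is semisimple, and negative-definiteness of its Killing form means it is of compact type; by Weyl's theorem the connected, simply connected group with Lie algebra $\mathfrak{s}$ is compact. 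Hence the analytic subgroup $S\subseteq K$ with $\mathrm{Lie}(S)=\mathfrak{s}$ is compact, and being compact it is a closed embedded subgroup of $K$.

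The genuinely delicate step is the passage from this algebra-level splitting to a metric product decomposition of $K$ itself, rather than of its universal cover. Let $Z=Z(K)_{0}$ be the identity component of the center; for connected $K$ this is a closed connected abelian subgroup with $\mathrm{Lie}(Z)=\mathfrak{z}$, so $Z\cong\mathbb{T}^{b}\times\mathbb{R}^{a}$ with unique maximal torus $T_{Z}$, and I write $\mathfrak{t}=\mathrm{Lie}(T_{Z})$. Because the inner product on $\mathfrak{z}$ is unconstrained ($\mathrm{Ad}$ acts trivially there), I cannot simply declare the $\mathbb{R}^{a}$-factor orthogonal to $T_{Z}$; instead I set $\mathfrak{v}=\mathfrak{t}^{\perp}\cap\mathfrak{z}$ and $V=\exp(\mathfrak{v})$, and the main obstacle is to prove that $V$ is a \emph{closed} subgroup isomorphic to $\mathbb{R}^{a}$ rather than a dense winding line. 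I would settle this by a lattice argument: the kernel $\Lambda$ of $\exp|_{\mathfrak{z}}$ is a full-rank lattice spanning exactly $\mathfrak{t}$, so the projection $\mathfrak{z}\to\mathfrak{z}/\mathfrak{v}\cong\mathfrak{t}$ sends $\Lambda$ to a full lattice, which is discrete; hence $\mathfrak{v}+\Lambda$ is closed in $\mathfrak{z}$ and $V$ is closed and isomorphic to $\mathbb{R}^{a}$.

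Finally I would assemble the factors. Set $K_{0}=S\cdot T_{Z}$, a product of two compact subgroups with $T_{Z}$ central, hence a compact subgroup of $K$. Since $V\cong\mathbb{R}^{a}$ has no nontrivial compact subgroup, $K_{0}\cap V=\{e\}$; since $\mathrm{Lie}(S)+\mathrm{Lie}(Z)=\mathfrak{k}$ one has $K=S\cdot Z=K_{0}\cdot V$; and since $V$ is central, the multiplication map $K_{0}\times V\to K$ is a bijective Lie group homomorphism whose differential is an isomorphism, hence an isomorphism of Lie groups, giving $K\cong K_{0}\times\mathbb{R}^{a}$. This decomposition is isometric because $\mathrm{Lie}(K_{0})=\mathfrak{s}\oplus\mathfrak{t}$ and $\mathrm{Lie}(V)=\mathfrak{v}$ are orthogonal (using $\mathfrak{s}\perp\mathfrak{z}$ and $\mathfrak{t}\perp\mathfrak{v}$), so the bi-invariant metric is the Riemannian product of the two bi-invariant factor metrics. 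Taking $d=a$ completes the argument. I expect the closedness of $V$ in the third step to be the crux; everything else is standard structure theory together with the orthogonal-complement lemma driven by $\mathrm{Ad}$-invariance.
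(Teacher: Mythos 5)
Your proof is correct. A point of orientation first: the paper gives no proof of this proposition at all --- it is imported from \cite{Milnor76}, Lemma 7.5 --- so the relevant comparison is with Milnor's argument. Milnor splits the \emph{universal cover}: the skew-symmetry of $\mathrm{ad}$ and the orthogonal-complement-of-an-ideal lemma (your first paragraph; his Lemma 7.2) give $\mathfrak{k}=\mathfrak{z}\oplus\mathfrak{s}$ with $\mathfrak{s}$ semisimple of compact type via the same Killing form/Cartan computation you use, whence the universal cover is (compact)$\,\times\,\mathbb{R}^{n}$ by Weyl's theorem; he then descends to $K$ by analyzing the projection of the discrete central subgroup of the cover to the $\mathbb{R}^{n}$-factor and showing it is discrete. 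You avoid the covering group entirely and work inside $K$: compactness of the analytic subgroup $S$ is pulled down as the continuous image of the compact simply connected group, and the delicate descent step is replaced by your direct analysis of $Z=Z(K)_{0}\cong\mathbb{T}^{b}\times\mathbb{R}^{a}$, where the closedness of $V=\exp(\mathfrak{v})$ via $\exp|_{\mathfrak{z}}^{-1}(V)=\mathfrak{v}+\Lambda=\pi^{-1}(\pi(\Lambda))$ with $\pi(\Lambda)$ a discrete full lattice is precisely the in-group analogue of Milnor's discreteness-of-projection argument; you are right that this is the crux, and your treatment of it is sound. Your route has one genuine advantage for the statement as the paper phrases it: by choosing $\mathfrak{v}=\mathfrak{t}^{\perp}\cap\mathfrak{z}$ rather than an arbitrary complement of $\mathfrak{t}$ in $\mathfrak{z}$, you get $\mathrm{Lie}(K_{0})=\mathfrak{s}\oplus\mathfrak{t}$ orthogonal to $\mathfrak{v}$, so the \emph{isometric} nature of the isomorphism --- which the proposition asserts but Milnor's statement does not stress --- falls out immediately. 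Two small remarks: you use connectedness of $K$ throughout, correctly (the statement is false for disconnected groups, and connectedness is the paper's standing hypothesis, so this should be made explicit); and in the step ``$K=S\cdot Z$'' it is worth one more line, namely that $S\cdot Z$ is a subgroup because $Z$ is central and contains a neighborhood of $e$ because $(X,Y)\mapsto e^{X}e^{Y}$ is a local diffeomorphism at $(0,0)$, so it is all of the connected group $K$.
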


If $G$ is a connected real Lie group, a \textbf{complexification} of $G$ is a
pair $(G_{\mathbb{C}},\iota)$ consisting of a complex Lie group $G_{\mathbb{C}%
}$ and a smooth homomorphism $\iota:G\rightarrow G_{\mathbb{C}}$ such that the
following universal property holds: For any complex Lie group $H$ and any
smooth homomorphism $\Phi:G\rightarrow H,$ there is a unique holomorphic
homomorphism $\Phi_{\mathbb{C}}:G_{\mathbb{C}}\rightarrow H$ such that
\[
\Phi_{\mathbb{C}}\circ\iota=\Phi.
\]

Suppose $K=K_{0}\times\mathbb{R}^{d}$ is a connected Lie group of compact
type. It is known (\cite[XVII Theorem 5.1]{Hochschild} or \cite[Theorem 4.1,
Propositions 8.4 and 8.6]{Brocker-tomDieck}) that the Lie algebra of the
complexification of $K_{0}$ is the complexification of its Lie algebra
$\mathfrak{k}_{0}$---that is, $\mathrm{Lie}((K_{0})_{\mathbb{C}}%
)=\mathfrak{k}+i\mathfrak{k}$---and that $\iota$ maps $K_{0}$ injectively into
its complexification. Meanwhile, the complexification of $\mathbb{R}^{d}$ is
$\mathbb{C}^{d},$ with $\iota$ being the obvious inclusion map. Thus, the Lie
algebra of $K_{\mathbb{C}}$ is the complexification of its Lie algebra, and
$\iota:K\rightarrow K_{\mathbb{C}}$ is injective. From now on, we always
identify $K$ with the subgroup $\iota(K)$ of $K_{\mathbb{C}}.$

\begin{example}
The compact Lie groups $\mathrm{SO}(n)$, $\mathrm{SU}(n)$, and $\mathrm{U}(n)$
have the following complexifications:
\[
\mathrm{SO}(n)_{\mathbb{C}}=\mathrm{SO}(n;\mathbb{C}),\quad\mathrm{SU}%
(n)_{\mathbb{C}}=\mathrm{SL}(n;\mathbb{C}),\quad\mathrm{U}(n)_{\mathbb{C}%
}=\mathrm{GL}(n;\mathbb{C}).
\]

\end{example}

We recall that a Lie group is called \textbf{unimodular} if every left Haar
measure is also right invariant.

\begin{proposition}
\label{p.compact.type.unimodular} If $K$ is a connected Lie group of compact
type, both $K$ and $K_{\mathbb{C}}$ are unimodular.
\end{proposition}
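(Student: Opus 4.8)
The plan is to reduce unimodularity to the vanishing of $\mathrm{Tr}(\mathrm{ad}_X)$ on the relevant Lie algebra, and then check this separately for $K$ and for $K_{\mathbb{C}}$. First I would record the standard link between the modular function and the adjoint representation. Since $\lambda$ is right-invariant, the conjugation automorphism $C_g=L_gR_{g^{-1}}$ satisfies $(C_g)_\ast\lambda=(L_g)_\ast\lambda=m(g)\lambda$; on the other hand $C_g$ is an automorphism with $d(C_g)_e=\mathrm{Ad}_g$, and the pushforward of a Haar measure under an automorphism $\alpha$ scales by $|\det(d\alpha_e)|^{-1}$ (compare Lebesgue measure in exponential coordinates near $e$). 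Hence $m(g)=|\det_{\mathbb{R}}\mathrm{Ad}_g|^{-1}$, where $\det_{\mathbb{R}}$ is taken over the Lie algebra viewed as a real vector space. Using $\mathrm{Ad}_{\exp X}=e^{\mathrm{ad}_X}$, so that $\det_{\mathbb{R}}\mathrm{Ad}_{\exp X}=e^{\mathrm{Tr}_{\mathbb{R}}(\mathrm{ad}_X)}>0$, together with the fact that a connected Lie group is generated by $\exp$ of any neighborhood of $0$, this shows that a connected group is unimodular if and only if $\mathrm{Tr}_{\mathbb{R}}(\mathrm{ad}_X)=0$ for every $X$ in its Lie algebra. (The sign of the exponent in the relation for $m$ is irrelevant, since we will only use that the determinant equals $1$.)

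For $K$ this criterion is already verified in the discussion preceding the proposition: an $\mathrm{Ad}$-invariant inner product makes each $\mathrm{ad}_X$ skew-symmetric, hence traceless, so $m|_K\equiv 1$.

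For $K_{\mathbb{C}}$ I would show $\mathrm{Tr}_{\mathbb{R}}(\mathrm{ad}_X)=0$ for all $X\in\mathfrak{k}_{\mathbb{C}}=\mathfrak{k}\oplus J\mathfrak{k}$. Because $K_{\mathbb{C}}$ is a complex Lie group, each $\mathrm{ad}_X$ is complex-linear by \eqref{Jcommutes}, so $\mathrm{Tr}_{\mathbb{R}}(\mathrm{ad}_X)=2\,\mathrm{Re}\,\mathrm{Tr}_{\mathbb{C}}(\mathrm{ad}_X)$, and it suffices to prove that the complex trace vanishes. The functional $X\mapsto\mathrm{Tr}_{\mathbb{C}}(\mathrm{ad}_X)$ is complex-linear: it is clearly real-linear, and $\mathrm{ad}_{JX}=J\,\mathrm{ad}_X$ follows from \eqref{Jbracket}, whence $\mathrm{Tr}_{\mathbb{C}}(\mathrm{ad}_{JX})=i\,\mathrm{Tr}_{\mathbb{C}}(\mathrm{ad}_X)$. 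Since $\mathfrak{k}_{\mathbb{C}}$ is the complex span of $\mathfrak{k}$, it is therefore enough to treat $X\in\mathfrak{k}$. For such $X$, taking a real basis of $\mathfrak{k}$ as a complex basis of $\mathfrak{k}_{\mathbb{C}}$, the matrix of $\mathrm{ad}_X$ has the real structure constants as its entries, so $\mathrm{Tr}_{\mathbb{C}}(\mathrm{ad}_X|_{\mathfrak{k}_{\mathbb{C}}})=\mathrm{Tr}_{\mathbb{R}}(\mathrm{ad}_X|_{\mathfrak{k}})$, which is $0$ by the compact-type case. Thus $\mathrm{Tr}_{\mathbb{C}}(\mathrm{ad}_X)=0$ for every $X\in\mathfrak{k}_{\mathbb{C}}$, and so $m_{K_{\mathbb{C}}}\equiv 1$.

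The only real subtlety is that $K_{\mathbb{C}}$ is itself \emph{not} of compact type (it is noncompact and admits no $\mathrm{Ad}(K_{\mathbb{C}})$-invariant inner product), so the skew-symmetry argument cannot be applied to it directly; the crux is to transfer tracelessness from $\mathfrak{k}$ to $\mathfrak{k}_{\mathbb{C}}$ via the complex-linearity of $\mathrm{ad}$ and the reduction to $X\in\mathfrak{k}$. One must also keep the real and complex traces carefully distinguished, since the modular function is governed by the real determinant on the $2d$-dimensional realification of $\mathfrak{k}_{\mathbb{C}}$.
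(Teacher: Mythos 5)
Your proof is correct, but it takes a genuinely different route from the paper's. Both arguments rest on the same criterion---for a connected group, unimodularity is equivalent to $\det_{\mathbb{R}}(\mathrm{Ad}_g)=1$ for all $g$---but the paper argues globally while you argue infinitesimally. The paper cites the criterion (Varadarajan) and observes that the Adjoint representation of $K$ lands in $\mathrm{SO}(n,\mathbb{R})$, hence that of $K_{\mathbb{C}}$ lands in $\mathrm{SO}(n,\mathbb{C})$ (the adjoint action of $K_{\mathbb{C}}$ preserves the complex-bilinear extension of the chosen inner product), so $\det_{\mathbb{C}}\mathrm{Ad}_g=1$; it then passes to the real determinant via $\det_{\mathbb{R}}T=\left\vert \det_{\mathbb{C}}T\right\vert^{2}$ for complex-linear $T$. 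You instead derive the criterion from scratch (computing that $m(g)=\left\vert\det_{\mathbb{R}}\mathrm{Ad}_g\right\vert^{\mp1}$, whence, by connectedness and $\mathrm{Ad}_{\exp X}=e^{\mathrm{ad}_X}$, unimodularity is equivalent to $\mathrm{Tr}_{\mathbb{R}}(\mathrm{ad}_X)\equiv 0$), and you transfer tracelessness from $\mathfrak{k}$ to $\mathfrak{k}_{\mathbb{C}}$ using the complex-linearity of $X\mapsto\mathrm{Tr}_{\mathbb{C}}(\mathrm{ad}_X)$, which follows from $\mathrm{ad}_{JX}=J\,\mathrm{ad}_X$ (i.e.\ \eqref{Jbracket}), together with $\mathrm{Tr}_{\mathbb{R}}(T)=2\,\mathrm{Re}\,\mathrm{Tr}_{\mathbb{C}}(T)$. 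This is exactly the Lie-algebra shadow of the paper's $\mathrm{SO}(n,\mathbb{C})$ step: complex-bilinear skew-symmetry of $\mathrm{ad}_X$ forces vanishing complex trace, and your reduction to $X\in\mathfrak{k}$ via a real basis of $\mathfrak{k}$ serving as a complex basis of $\mathfrak{k}_{\mathbb{C}}$ is sound, since the bracket on $\mathfrak{k}_{\mathbb{C}}$ is the complex-bilinear extension of that on $\mathfrak{k}$ (Proposition \ref{p.complexify}). What your route buys is self-containedness (no external citation for the modular-function criterion) and it replaces the determinant identity $\det_{\mathbb{R}}=\left\vert\det_{\mathbb{C}}\right\vert^2$ with the easier trace identity; what the paper's route buys is avoiding your exponential-coordinates computation of the Haar scaling constant, staying entirely at the level of group representations.

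Two small corrections, neither of which affects the validity of the argument. First, when you apply the criterion to $K_{\mathbb{C}}$ you are implicitly using that $K_{\mathbb{C}}$ is connected; this is true (Proposition \ref{p.complexify}) but should be said explicitly. Second, your closing parenthetical that $K_{\mathbb{C}}$ ``admits no $\mathrm{Ad}(K_{\mathbb{C}})$-invariant inner product'' is false in general: if $K$ is abelian (e.g.\ $K=\mathbb{R}^d$ or a torus), then $K_{\mathbb{C}}$ is abelian, every inner product on $\mathfrak{k}_{\mathbb{C}}$ is $\mathrm{Ad}$-invariant, and $K_{\mathbb{C}}$ is itself of compact type. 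The claim is correct for nonabelian $K$ (skew-symmetry of every $\mathrm{ad}_Z$, $Z\in\mathfrak{k}_{\mathbb{C}}$, combined with $\mathrm{ad}_{JX}=J\,\mathrm{ad}_X$ would force $\mathrm{ad}\equiv 0$), and in any case the remark plays no role in your proof; your stated reason for not applying the skew-symmetry argument directly to $K_{\mathbb{C}}$ stands in the cases where it matters.
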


\begin{proof}
It is well known that a connected Lie group $G$ is unimodular if and only if
$\mathrm{Ad}_{g}$ has determinant equal to 1 for all $g\in G$. (See, for
example, Exercise 26 in Chapter 2 of \cite{Varadarajan}.) There is, however, a
subtlety: even if $G$ happens to be a complex Lie group, so that
$\mathrm{Ad}_{g}$ is a complex-linear map on the the Lie algebra, the
determinant of $\mathrm{Ad}_{g}$ should be taken over $\mathbb{R}$ not
$\mathbb{C}$.

Fix an Ad-invariant inner product on $\mathfrak{k}$ and identify
$\mathfrak{k}$ with $\mathbb{R}^{n}$ using an orthonormal basis. Then the
adjoint representation of $K$ maps into $\mathrm{O}(n)$, and actually into
$\mathrm{SO}(n)$, since $K$ is connected. Thus, $\mathrm{Ad}_{x}$ has
determinant equal to 1 for all $x\in K$, showing that $K$ is unimodular.

Now, since the adjoint representation of $K$ maps into $\mathrm{SO}(n)$, the
adjoint representation of $K_{\mathbb{C}}$ maps into the complexification of
$\mathrm{SO}(n)$, namely $\mathrm{SO}(n;\mathbb{C})$. It follows that for all
$g\in K_{\mathbb{C}}$, the determinant of $\mathrm{Ad}_{g}$, computed over
$\mathbb{C}$, is 1. It is then easily verified that the determinant over
$\mathbb{R}$ of a complex-linear transformation, viewed as a real linear
transformation, is the square of the absolute value of the determinant over
$\mathbb{C}$. (Work in a basis over $\mathbb{C}$ in which the operator is
upper triangular.) Thus, the determinant of $\mathrm{Ad}_{g}$, computed over
$\mathbb{R}$, is also 1, showing that $K_{\mathbb{C}}$ is unimodular.
\end{proof}

Let $K$ be a connected Lie group of compact type and $K_{\mathbb{C}}$ its
complexification. It is convenient, for reasons that will be apparent shortly,
to write the \textquotedblleft multiplication by $i$\textquotedblright\ map on
$\mathfrak{k}_{\mathbb{C}}$ as $J:\mathfrak{k}_{\mathbb{C}}\rightarrow
\mathfrak{k}_{\mathbb{C}}$. (Thus, $J^{2}=-I$.) Since $\mathfrak{k}%
_{\mathbb{C}}$ is a complex Lie algebra, the bracket on $\mathfrak{k}%
_{\mathbb{C}}$ is bilinear over $\mathbb{C}$, and in particular
\begin{equation}
\lbrack JX,Y]=J[X,Y] \label{Jbracket}%
\end{equation}
for all $X,Y\in\mathfrak{k}_{\mathbb{C}}$.

For any $X\in\mathfrak{k}_{\mathbb{C}}$, the left-invariant vector field
$\tilde{X}$ is given by%
\begin{equation}
(\tilde{X}f)(g)=\left.  \frac{d}{dt}f(ge^{tX})\right\vert _{t=0}
\label{equ.left}%
\end{equation}
for any smooth real- or complex-valued function $f$ on $K_{\mathbb{C}}$. We
may now appreciate the utility of the notion $J$ for the \textquotedblleft
multiplication by $i$\textquotedblright\ map on $\mathfrak{k}_{\mathbb{C}}$:
in general, $\widetilde{JX}f$ is not the same as $i\,\tilde{X}f$ (for example,
if $f$ is real valued). On the other hand, a complex-valued function $f$ on
$K_{\mathbb{C}}$ is holomorphic if and only if the differential of $f$ at each
point $g\in K_{\mathbb{C}}$ is a complex-linear map from $T_{g}(K_{\mathbb{C}%
})$ to $\mathbb{C}$. Thus, if $f$ is holomorphic, then for all $X\in
\mathfrak{g}$ and $g\in K_{\mathbb{C}}$, we have
\begin{equation}
\widetilde{JX}f(g)=i\tilde{X}f(g)\quad(f\text{ holomorphic}).
\label{Jholomorphic}%
\end{equation}

\section{Invariant Metrics on $K$ and $K_{\mathbb{C}}$
\label{section Inner Products}}

\subsection{Invariant metrics\label{invariantMetrics.sec}}

If $G$ is a Lie group with Lie algebra $\mathfrak{g}$ and $K\subseteq G$ is a
compact Lie subgroup, one can produce an $\mathrm{Ad}(K)$-invariant inner
product on $\mathfrak{g}$ by averaging any inner product over the adjoint
representation of $K$, as above. This raises the question: how many
$\mathrm{Ad}(K)$-invariant inner products does $G$ possess? We now answer this
question in the case that $K$ is simple (and compact type), and
$G=K_{\mathbb{C}}$ is the complexification of $K$.

Fix a compact-type Lie group $K$, and an $\mathrm{Ad}(K)$-invariant inner
product $\langle\cdot,\cdot\rangle_{\mathfrak{k}}$ on its Lie algebra
$\mathfrak{k}$. Let $K_{\mathbb{C}}$ denote the complexification of $K$
(cf.\ Section \ref{Section Complexification}); in particular $\mathfrak{k}%
_{\mathbb{C}}\equiv\mathrm{Lie}(K_{\mathbb{C}})=\mathfrak{k}\oplus
J\mathfrak{k}$. Consider the following three-parameter family of inner
products on $K_{\mathbb{C}}$:
\begin{equation}
\langle X_{1}+JY_{1},X_{2}+JY_{2}\rangle_{a,b,c}:=a\langle X_{1},X_{2}%
\rangle_{\mathfrak{k}}+b\langle Y_{1},Y_{2}\rangle_{\mathfrak{k}}+c(\langle
X_{1},Y_{2}\rangle_{\mathfrak{k}}+\langle X_{2},Y_{1}\rangle_{\mathfrak{k}})
\label{e.inner.products}%
\end{equation}
for $X_{1},X_{2},Y_{1},Y_{2}\in\mathfrak{k}$, where $a,b>0$ and $c^{2}<ab$. It
is straightforward to verify that the symmetric bilinear forms in
\eqref{e.inner.products} are real inner products on $\mathfrak{k}_{\mathbb{C}%
}$ (precisely under the conditions on $a,b,c$ stated below the equation), and
are all $\mathrm{Ad}(K)$-invariant. The main theorem of this section is that,
in the case that $K$ is simple, this is a complete characterization of all
$\mathrm{Ad}(K)$-invariant inner products on $K_{\mathbb{C}}$.

\begin{definition}
A Lie group $K$ is called \emph{simple} if $\mathrm{dim}\,K\ge2$, and the Lie
algebra $\mathfrak{k}$ of $K$ has no nontrivial ideals.
\end{definition}

\begin{theorem}
\label{t.Ad(K)-inv.inn.prod} Let $K$ be a simple (or $1$-dimensional) Lie
group of compact type. Then $\mathfrak{k}$ has a unique (up to scale)
$\mathrm{Ad}$-invariant real inner product $\langle\cdot,\cdot\rangle
_{\mathfrak{k}}$, and \emph{all} $\mathrm{Ad}(K)$-invariant real inner
products on $\mathfrak{k}_{\mathbb{C}}$ have the form \eqref{e.inner.products}.
\end{theorem}

\begin{remark}
For example, $K=\mathrm{SU}(n)$ is simple, with complexification
$K_{\mathbb{C}}=\mathrm{SL}(n,\mathbb{C})$. Hence \eqref{e.inner.products}
characterizes all $\mathrm{Ad}(\mathrm{SU}(n))$-invariant inner products on
$\mathrm{SL}(n,\mathbb{C})$, where $\langle X,Y\rangle_{\mathfrak{su}%
(n)}=\mathrm{Tr}(XY^{\ast}) = -\mathrm{Tr}(XY)$ is the unique (up to scale)
$\mathrm{Ad}$-invariant inner product on $\mathfrak{su}(n)$. In that case, the
family can be written explicitly in terms of the trace as
\begin{equation}
\langle A,B\rangle_{a,b,c}=\frac{1}{2}(b+a)\mathrm{Re}\,\mathrm{Tr}(AB^{\ast
})+\frac{1}{2}\mathrm{Re}\left[  (b-a+2ic)\mathrm{Tr}(AB)\right]  .
\label{e.inner.products.SL(n,C)}%
\end{equation}

Extending to $\mathrm{U}(n)$ and its complexification $\mathrm{GL}%
(n,\mathbb{C})$, it is easy to compute that all $\mathrm{Ad}(\mathrm{U}%
(n))$-invariant inner products on $\mathfrak{gl}(n,\mathbb{C})$ are of the
form \eqref{e.inner.products} plus one more term, involving the $1$%
-dimensional subspace spanned by the identity matrix; extending
\eqref{e.inner.products.SL(n,C)}, there is one more term involving
$\mathrm{Tr}(A)\mathrm{Tr}(B)$. In \cite{Kemp2015c,Kemp2015a,Kemp2015b}, the
third author studied the large-$n$ limits of the diffusion processes on
$\mathrm{GL}(n,\mathbb{C})$ invariant with respect to the inner products
$\langle\,,\,\rangle_{a,b,0}$. Part of the motivation for the present work was
the question of whether those were the largest class of appropriately
invariant diffusions; the answer provided by Theorem
\ref{t.Ad(K)-inv.inn.prod} is no.
\end{remark}

\begin{remark}
The first statement of Theorem \ref{t.Ad(K)-inv.inn.prod}, that the
$\mathrm{Ad}$-invariant inner product on $K$ is unique up to scale when $K$ is
simple, is well known; it was proved, for example, in \cite[Lemma
7.6]{Milnor76}.
\end{remark}

We will use Schur's lemma as a tool in the proof of Theorem
\ref{t.Ad(K)-inv.inn.prod}, but this is complicated by the fact that the inner
products in question are \emph{real}. We must therefore be careful about how
and when we complexify.

\begin{lemma}
\label{l.Ad.irred} If $K$ is a simple (real) Lie group with Lie algebra
$\mathfrak{k}$, then the (real) adjoint representation of $K$ on
$\mathfrak{k}$ is irreducible. Moreover, if $K$ is compact type, then the
(complex) adjoint representation of $K$ on $\mathfrak{k}_{\mathbb{C}}$ is also irreducible.
\end{lemma}

\begin{proof}
If $\mathcal{I}\subseteq\mathfrak{k}$ is an invariant real subspace for
$\mathrm{Ad}(K)$, then $\mathrm{Ad}_{e^{tX}}(Y)\in\mathcal{I}$ for all
$t\in\mathbb{R}$, $X\in\mathfrak{k}$, and $Y\in\mathcal{I}$. Taking the
derivative at $t=0$ shows that $\mathrm{ad}_{X}(Y) = [X,Y]\in\mathcal{I}$ for
all $X\in\mathfrak{k}$ and $Y\in\mathcal{I}$, which means $\mathcal{I}%
\subseteq\mathfrak{k}$ is an ideal in $\mathfrak{k}$. Thus $\mathcal{I}%
\in\{0,\mathfrak{k}\}$, yielding the first statement of the lemma.

Now, \cite[Theorem 7.32]{HallLieBook} states that the simplicity of
$\mathfrak{k}$ implies that $\mathfrak{k}_{\mathbb{C}}$ is also simple as a
complex Lie algebra. (The statement given there assumes $K$ is compact, but
the proof only uses the fact that it is compact type.) So, let $\mathcal{J}%
\subseteq\mathfrak{k}_{\mathbb{C}}$ be an invariant complex subspace for
$\mathrm{Ad}(K)$. The same argument above shows that $[X,W]\in\mathcal{J}$ for
all $X\in\mathfrak{k}$ and $W\in\mathcal{J}$. Any $Z\in\mathfrak{k}%
_{\mathbb{C}}$ has the form $Z=X+JY$ for $X,Y\in\mathfrak{k}$, and by
\eqref{Jbracket}, we therefore have
\[
[Z,W] = [X+JY,W] = [X,W]+J[Y,W]\in\mathcal{J}+J\mathcal{J}=\mathcal{J},
\qquad\forall\; Z\in\mathfrak{k}_{\mathbb{C}}, W\in\mathcal{J}
\]
where the final equality follows from the fact that $\mathcal{J}$ is a
\emph{complex} subspace. Hence $\mathcal{J}$ is a complex ideal in
$\mathfrak{k}_{\mathbb{C}}$, and therefore $\mathcal{J}\in\{0,\mathfrak{k}%
_{\mathbb{C}}\}$. This concludes the proof of the second statement.
\end{proof}

We now prove the algebraic result that constitutes most of the proof of
Theorem \ref{t.Ad(K)-inv.inn.prod}.

\begin{proposition}
\label{p.Ad(K)-inv.bilinear} Let $K$ be a simple (or $1$-dimensional) real
compact-type Lie group, and fix an $\mathrm{Ad}$-invariant inner product
$\langle\cdot,\cdot\rangle_{\mathfrak{k}}$ on its Lie algebra $\mathfrak{k}$.
If $\mathcal{B}\colon\mathfrak{k}_{\mathbb{C}}\times\mathfrak{k}_{\mathbb{C}%
}\to\mathbb{R}$ is an $\mathrm{Ad}(K)$-invariant symmetric bilinear form, then
$\mathcal{B}$ has the form \eqref{e.inner.products} for some $a,b,c\in
\mathbb{R}$.
\end{proposition}

\begin{proof}
The result is straightforward when $K$ is $1$-dimensional, so we focus on the
case that $K$ is simple. We use the inner product $\langle\cdot,\cdot
\rangle_{1,1,0}$ (cf.\ \eqref{e.inner.products}) as a reference; there is then
some endomorphism $M\colon\mathfrak{k}_{\mathbb{C}}\rightarrow\mathfrak{k}%
_{\mathbb{C}}$ such that
\[
\mathcal{B}(Z,W)=\langle Z,M(W)\rangle_{1,1,0}\qquad\forall\;Z,W\in
\mathfrak{k}_{\mathbb{C}}.
\]
The symmetry of $\mathcal{B}$ forces $M$ to be self-adjoint. We identify
$\mathfrak{k}_{\mathbb{C}}=\mathfrak{k}\oplus J\mathfrak{k}$ with
$\mathfrak{k}\oplus\mathfrak{k}$. Thus we can decompose the endomorphism $M$
in block diagonal form
\begin{equation}
M=\left[
\begin{array}
[c]{cc}%
A & C\\
C^{\top} & B
\end{array}
\right]  \label{e.block.matrix.M}%
\end{equation}
where $A$ and $B$ are symmetric matrices.

Since the adjoint representation of $K$ commutes with $J$, it follows that,
under the isomorphism $\mathfrak{k}_{\mathbb{C}}\cong\mathfrak{k}%
\oplus\mathfrak{k}$, $\mathrm{Ad}_{k}$ acts diagonally for all $k\in K$. Using
the fact that both the inner product $\langle\cdot,\cdot\rangle_{1,1,0}$ and
the bilinear form $\mathcal{B}$ are $\mathrm{Ad}_{k}$-invariant, it is
straightforward to compute that the matrices $A$, $B$, $C$, and $C^{\top}$ all
commute with $\mathrm{Ad}_{k}$ for each $k\in K$. The same therefore applies
to the complex-linear extensions of these endomorphisms to $\mathfrak{k}%
_{\mathbb{C}}$. It then follows from Lemma \ref{l.Ad.irred} and Schur's lemma
that there are constants $a,b,c\in\mathbb{C}$ with $A=aI$, $B=bI$, and
$C=C^{\top}=cI$. Since each of the endomorphisms preserves the real subspace
$\mathfrak{k}$, it follows that $a,b,c\in\mathbb{R}$.

Hence, for $Z=X+JY\in\mathfrak{k}_{\mathbb{C}}$, \eqref{e.block.matrix.M}
yields $M(Z) =(aX+cY)+J(cX+bY)$. From the definition of the inner product
$\langle\cdot,\cdot\rangle_{1,1,0}$, we therefore have
\begin{align*}
\mathcal{B}(X_{1}+JY_{1},X_{2}+JY_{2})  &  = \langle X_{1}+JY_{1}%
,aX_{2}+cY_{2}+J(cX_{2}+bY_{2})\rangle_{1,1,0}\\
&  = \langle X_{1},aX_{2}+cY_{2}\rangle_{\mathfrak{k}} + \langle Y_{1}%
,cX_{2}+bY_{2}\rangle_{\mathfrak{k}}\\
&  = a\langle X_{1},X_{2}\rangle_{\mathfrak{k}}+c\langle X_{1},Y_{2}%
\rangle_{\mathfrak{k}}+c\langle Y_{1},X_{2}\rangle_{\mathfrak{k}}+b\langle
Y_{1},Y_{2}\rangle_{\mathfrak{k}}\\
&  = \langle X_{1}+JY_{1},X_{2}+JY_{2}\rangle_{a,b,c}%
\end{align*}
concluding the proof.
\end{proof}

The proof of Theorem \ref{t.Ad(K)-inv.inn.prod} now follows quite easily.

\begin{proof}
[Proof of Theorem \ref{t.Ad(K)-inv.inn.prod}]Let $\langle\cdot,\cdot
\rangle_{\mathfrak{k}}$ and $\langle\cdot,\cdot\rangle_{\mathfrak{k}}^{\prime
}$ denote two $\mathrm{Ad}$-invariant inner products on $K$. We may view the
second inner product as a symmetric (degenerate) bilinear form on
$\mathfrak{k}_{\mathbb{C}}$, which is $\mathrm{Ad}(K)$-invariant. By
Proposition \ref{p.Ad(K)-inv.bilinear}, it follows that $\langle\cdot
,\cdot\rangle_{\mathfrak{k}}^{\prime}= a\langle\cdot,\cdot\rangle
_{\mathfrak{k}}$ for some $a\in\mathbb{R}$ (the other terms in
\eqref{e.inner.products} are $0$); the fact that both are inner products
forces $a>0$. This proves the uniqueness, up to scale, of the $\mathrm{Ad}%
$-invariant inner product on $K$.

Now, any real inner product $\langle\cdot,\cdot\rangle$ on $\mathfrak{k}%
_{\mathbb{C}}$ is a symmetric bilinear form on $\mathfrak{k}_{\mathbb{C}}$,
and so by $\mathrm{Ad}(K)$-invariance, Proposition \ref{p.Ad(K)-inv.bilinear}
shows that it has the form \eqref{e.inner.products} for some $a,b,c\in
\mathbb{R}$. Since it is an inner product, it follows that the matrix $M$ of
\eqref{e.block.matrix.M} is positive definite, and given its block diagonal
form, this is equivalent to $a,b>0$ and $ab-c^{2}>0$. This concludes the proof.
\end{proof}

\subsection{Laplacians\label{laplacians.sec}}

We use the notation $\tilde{X}$ for the left-invariant vector field associated
to a Lie algebra element $X,$ as in (\ref{equ.left}). We fix an $\mathrm{Ad}%
(K)$-invariant inner product $\left\langle \cdot,\cdot\right\rangle
_{\mathfrak{k}}$ on $K.$ Then if $\{X_{j}\}_{j=1}^{\dim\mathfrak{k}}$ is an
orthonormal basis for $\mathfrak{k}$ with respect to $\left\langle \cdot
,\cdot\right\rangle _{\mathfrak{k}},$ we define $\Delta_{K}$ to be the
operator given by%
\begin{equation}
\Delta_{K}=\sum_{j=1}^{\dim\mathfrak{k}}\tilde{X}_{j}^{2}.
\label{LaplacianOnK}%
\end{equation}
The operator is easily seen to be independent of the choice of orthonormal
basis. Since $K$ is unimodular, this operator is the Laplace--Beltrami
operator for the left-invariant metric determined by $\left\langle \cdot
,\cdot\right\rangle _{\mathfrak{k}}$ \cite[Remark 2.2]{Driver1997}. Since
$\left\langle \cdot,\cdot\right\rangle _{\mathfrak{k}}$ is $\mathrm{Ad}%
(K)$-invariant, the metric on $K$ is actually bi-$K$-invariant and thus
$\Delta_{K}$ is bi-$K$-invariant.

We now fix real numbers $a,$ $b,$ and $c$ with $a,b>0$ and $c^{2}<ab,$ as in
Section \ref{invariantMetrics.sec}, and let $\left\langle \cdot,\cdot
\right\rangle _{a,b,c}$ be the associated $\mathrm{Ad}(K)$-invariant inner
product. We then choose an orthonormal basis $\{Z_{j}\}_{j=1}^{2\dim
\mathfrak{k}}$ for $\mathfrak{k}_{\mathbb{C}}$ with respect to this inner
product and define the Laplacian $L_{a,b,c}$ by%
\begin{equation}
L_{a,b,c}=\sum_{j=1}^{2\dim\mathfrak{k}}Z_{j}^{2}, \label{LaplacianOnKc}%
\end{equation}
similarly to (\ref{LaplacianOnK}).

\begin{proposition}
\label{p.Labc} Let $L_{a,b,c}$ denote the Laplacian in (\ref{LaplacianOnKc}).
Fix any basis $\{X_{j}\}_{j=1}^{d}$ of $\mathfrak{k}$ orthonormal with respect
to the given $\mathrm{Ad}(K)$-invariant inner product on $\mathfrak{k}$, and
let $Y_{j}=JX_{j}$. Then
\begin{equation}
L_{a,b,c}=\frac{1}{ab-c^{2}}\sum_{j=1}^{d}\left[  b\tilde{X}_{j}^{2}%
+a\tilde{Y}_{j}^{2}-2c\tilde{X}_{j}\tilde{Y}_{j}\right]  . \label{e.Labc}%
\end{equation}

\end{proposition}

\begin{proof}
We use the basis $\{Z_{j}\}_{j=1}^{2\dim\mathfrak{k}}$ consisting of
$X_{1},Y_{1},\ldots,X_{\dim\mathfrak{k}},Y_{\dim\mathfrak{k}}$ (in that
order). We let $\{q_{lm}\}_{l,m=1}^{2\dim\mathfrak{k}}$ be the associated Gram
matrix, that is, the matrix of inner products of these basis elements with
respect to the inner product $\left\langle \cdot,\cdot\right\rangle _{a,b,c}.$
If $q^{-1}$ is the inverse matrix to $q,$ it is an elementary computation to
verify that
\begin{equation}
L_{a,b,c}=\sum_{l,m=1}^{2\dim\mathfrak{k}}(q^{-1})_{lm}\tilde{Z}_{l}\tilde
{Z}_{m}. \label{e.Labc.0}%
\end{equation}

Now, we can compute directly from (\ref{e.inner.products}) and the
orthonormality of $\{X_{j}\}_{j=1}^{d}$ that
\[
\langle X_{i},X_{j}\rangle_{a,b,c}=a\delta_{ij},\quad\langle Y_{i}%
,Y_{j}\rangle_{a,b,c}=b\delta_{ij},\quad\langle X_{i},Y_{j}\rangle
_{a,b,c}=\langle Y_{i},X_{j}\rangle_{a,b,c}=c\delta_{ij}.
\]
It follows that the matrix $q$ is block diagonal with $2\times2$ diagonal
blocks all equal to the matrix $B$ (below). Thus $q^{-1}$ is also block
diagonal with $2\times2$ diagonal blocks all equal to $B^{-1}$ (below):
\[
B=\left[
\begin{array}
[c]{cc}%
a & c\\
c & b
\end{array}
\right]  ,\qquad B^{-1}=\frac{1}{ab-c^{2}}\left[
\begin{array}
[c]{cc}%
b & -c\\
-c & a
\end{array}
\right]  .
\]
Combining this with \eqref{e.Labc.0} yields \eqref{e.Labc}.
\end{proof}

To dispense with the cumbersome determinant in the denominator in
\eqref{e.Labc}, and match the parametrization relevant to the Segal--Bargmann
transform, we make the following change of parametrization:
\begin{equation}
(s,t,u)=\Phi(a,b,c):=\frac{1}{ab-c^{2}}(a+b,2a,2c). \label{e.(a,b,c)->(s,t,u)}%
\end{equation}
It is straightforward to verify that $\Phi$ is a diffeomorphism
\[
\Phi\colon\{(a,b,c)\colon a,b>0,c^{2}<ab\}\rightarrow\{(s,t,u)\colon
t>0,u\in\mathbb{R},2s>t+u^{2}/t\}
\]
with inverse
\begin{equation}
(a,b,c)=\Phi^{-1}(s,t,u)=\frac{4}{2st-t^{2}-u^{2}}(\textstyle{\frac{t}%
{2},s-\frac{t}{2},\frac{u}{2}})=\frac{1}{\alpha}(\textstyle{\frac{t}%
{2},s-\frac{t}{2},\frac{u}{2}}) \label{e.Phi.inverse.formula}%
\end{equation}
referring to the constant $\alpha$ of \eqref{e.alpha}, which is positive
precisely in range of $\Phi$. From here on, we use the parameters $(s,t,u)$
which leads to the notation used in Definition \ref{d.Lap.rsu} of
$\Delta_{s,\tau}$ on $K_{\mathbb{C}}$ in the introduction. In particular, this
means that the Laplacian $\Delta_{s,\tau}$ corresponds to the inner product
$\langle\cdot,\cdot\rangle_{a,b,c}$ where $(a,b,c)$ are given as in
\eqref{e.Phi.inverse.formula}. The fact that $\Phi$ is a bijection shows that
there is a one-to-one correspondence between the Laplacians $\Delta_{s,\tau}$
and the inner products $\langle\cdot,\cdot\rangle_{a,b,c}$.

\section{Heat kernels and matrix entries}

We refer the reader to \cite{Robinson1991} or \cite{Varopoulos1992} for the
general theory of heat kernels on Lie groups.

\subsection{Heat kernels on $K$ and $K_{\mathbb{C}}$}

We now fix a connected Lie group $K$ of compact type, together with an
$\mathrm{Ad}(K)$-invariant inner product $\left\langle \cdot,\cdot
\right\rangle _{\mathfrak{k}}$ on $\mathfrak{k}.$ We let $\Delta_{K}$ be the
associated Laplacian on $K,$ as in Section \ref{laplacians.sec}. We then let
$\rho_{t}$ be the associated \textbf{heat kernel} on $K,$ i.e., the
fundamental solution at the identity to the heat equation%
\[
\frac{\partial u}{\partial t}=\frac{1}{2}\Delta_{K}u.
\]
Then the heat operator may be computed as%
\begin{equation}
(e^{t\Delta/2}f)(x)=\int_{K}\rho_{t}(xy^{-1})f(y)~dy, \label{heatOpOnK}%
\end{equation}
where $dy$ is the Riemannian volume measure associated to the left-invariant
Riemannian metric on $K$ induced by the inner product $\left\langle
\cdot,\cdot\right\rangle _{\mathfrak{k}}$ on $\mathfrak{k}.$

\begin{remark}
For a general left-invariant metric on $K,$ the right-hand side of
(\ref{heatOpOnK}) should have $\rho_{t}(y^{-1}x)$ rather than $\rho
_{t}(xy^{-1}).$ Since however, our metric is $\mathrm{Ad}(K)$-invariant, the
heat kernel $\rho_{t}$ is a class function, so that $\rho_{t}(y^{-1}%
x)=\rho_{t}(xy^{-1}).$ We write $\rho_{t}(xy^{-1})$ to maintain consistency
with \cite{Hall1994}.
\end{remark}

We fix $s>0$ and $\tau\in\mathbb{C}$ with $\tau\in\mathbb{D}(s,s)$ (the disk
of radius $s$, centered at $s$). We consider a left-invariant metric on
$K_{\mathbb{C}}$ whose value at the identity is one of the inner products
considered in Section \ref{invariantMetrics.sec}. The associated Laplacian,
denoted $\Delta_{s,\tau},$ is the one considered in Definition \ref{d.Lap.rsu}%
. We emphasize that, although $\tau$ is a complex number, the Laplacian
$\Delta_{s,\tau}$ is a real elliptic operator on $K_{\mathbb{C}}.$ We then let
$\mu_{s,\tau,r}$ be the associated heat kernel, i.e., the fundamental solution
at the identity to the heat equation%
\[
\frac{\partial u}{\partial r}=\frac{1}{2}\Delta_{s,\tau}u,
\]
with $r$ being the time-variable in the heat equation. We will mainly be
interested in the value of this heat kernel at $r=1$:%
\[
\mu_{s,\tau}:=\mu_{s,t,1}.
\]
That is to say, formally,%
\[
\mu_{s,\tau}=e^{\Delta_{s,\tau}/2}(\delta),
\]
where $\delta$ is a $\delta$-function at the identity.

\begin{lemma}
[Averaging Lemma]\label{averaging.lem}Assume $K$ is compact. For each $s$ and
$\tau$ with $\tau\in\mathbb{D}(s,s)$, let $\nu_{s,\tau}$ be the associated
$K$\textbf{-averaged heat kernel}, given in Definition
\ref{d.K-av.heat.kernel}:
\[
\nu_{s,\tau}(g)=\int_{K}\mu_{s,\tau}(gk)~dk.
\]
Then there exist constants $a_{s,\tau}$ and $b_{s,\tau}$ such that%
\[
a_{s,\tau}\nu_{s,\tau}(g)\leq\mu_{s,\tau}(g)\leq b_{s,\tau}\nu_{s,\tau}(g)
\]
for all $g\in G.$ To be more precise: for each $s$ and $\tau,$ let $\sigma$ be any
positive number such that $\tau\in\mathbb{D}(s-\sigma,s-\sigma).$ Then we may
take%
\[
a_{s,\tau}=\min_{k\in K}\rho_{\sigma}(k);\quad b_{s,\tau}=\max_{k\in K}%
\rho_{\sigma}(k).
\]

\end{lemma}

\begin{proof}
We write the operator $\Delta_{s,\tau},$ as defined in
(\ref{e.def.Delta.s.tau}), in the form%
\begin{equation}
\Delta_{s,\tau}=\sigma\sum_{j=1}^{\dim\mathfrak{k}}\tilde{X}_{j}^{2}%
+\Delta_{s-\sigma,\tau}. \label{decomposeDelta}%
\end{equation}
Now, the operator $\Delta_{s-\sigma,\tau}$ is constructed from left-invariant
vector fields and is therefore a left-invariant operator on $K_{\mathbb{C}}.$
Since the inner product in the construction of $\Delta_{s,\tau}$ is
$\mathrm{Ad}(K)$-invariant, $\Delta_{s,\tau}$ is also invariant under the
\textit{right} action of $K.$ It follows that $\Delta_{s,\tau}$ commutes with
the left-invariant vector field $\tilde{X}$ on $K_{\mathbb{C}},$ with
$X\in\mathfrak{k},$ since $\tilde{X}$ is an infinitesimal right translation.
We conclude that the two terms on the right-hand side of (\ref{decomposeDelta}%
) commute. Once this observation has been made, the proof of the averaging
lemma from \cite[Lemma 11]{Hall1994} tells us that%
\[
\mu_{s,\tau}(g)=\int_{K}\mu_{s-\sigma,\tau}(gk^{-1})\rho_{\sigma}(k)~dk.
\]
Since $\rho_{s}(k)~dk$ is a probability measure, the integral of $\mu_{s,\tau
}$ over each $K$-orbit is the same as the corresponding integral of
$\mu_{s-\sigma,\tau}.$ Thus, we obtain%
\[
\mu_{s,\tau}(g)\leq\max_{k\in K}\rho_{\sigma}(k)\int_{K}\mu_{s-\sigma,\tau
}(gk^{-1})~dk=\max_{k\in K}\rho_{\sigma}(k)\nu_{s,\tau}(g),
\]
as claimed, and similarly for the lower bound.
\end{proof}

\subsection{Matrix entries\label{Section Matrix Entries}}

In the case $K=\mathbb{R}^{d}$, it is convenient to do computations with the
heat operator on polynomials. Although these functions are not in
$L^{2}(\mathbb{R}^{d})$, one can na\"{\i}vely make sense of $e^{\frac{t}%
{2}\Delta_{\mathbb{R}^{d}}}f$ as a terminating power series for any polynomial
$f$. It is then an easy matter to verify that the integral formula for the
heat operator coincides with its Taylor series. That is to say, if $f$ is a
polynomial on $\mathbb{R}^{d}$, then
\begin{equation}
\int_{\mathbb{R}^{d}}\rho_{t}(x-y)f(y)\,dy=\sum_{n=0}^{\infty}\frac{(t/2)^{n}%
}{n!}(\Delta_{\mathbb{R}^{d}})^{n}f(x). \label{e.heat.op.poly}%
\end{equation}
Equation \eqref{e.heat.op.poly} is easy to prove directly; the result is also
a special case of Proposition \ref{p.heat.op.on.m.e.} below.

We will need a counterpart of polynomial functions on a general (compact-type)
Lie group; these are \emph{matrix entries}, which we define as follows.

\begin{definition}
\label{d.matrix.entries} Let $G$ be a Lie group. Let $(\pi,V_{\pi})$ be a
finite-dimensional complex representation of $G$, and let $A\in\mathrm{End}%
(V_{\pi})$ be a fixed endomorphism. The associated \textbf{matrix entry}
function $f_{\pi,A}$ on $G$ is the function
\[
f_{\pi,A}(x)=\mathrm{Tr}(\pi(x)A).
\]
If $G$ is a complex Lie group and the representation $\pi:G\rightarrow
GL(V_{\pi})$ is holomorphic, then we refer to $f_{\pi,A}$ as a
\textbf{holomorphic matrix entry}. In particular, every holomorphic matrix
entry on a complex Lie group is a holomorphic function.
\end{definition}

\begin{remark}
\label{rk.m.e.lin.funct.}A number of comments on matrix entries are in order.

\begin{enumerate}
\item Although some authors might require $\pi$ to be irreducible in order to
call $f_{\pi,A}$ a \emph{matrix entry}, we make no irreducibility assumption
in our definition. If $G$ is compact, every finite-dimensional representation
of $G$ decomposes as a direct sum of irreducibles, in which case every matrix
entry is a linear combination of matrix entries for irreducible
representations. In general, not every matrix entry (in the sense of
Definition \ref{d.matrix.entries}) will decompose as a sum of matrix entries
of irreducible representations.

\item Some authors require a matrix entry to be of the form $f(x)=\xi
(\pi(x)v)$ for some $v\in V$ and $\xi\in V^{\ast}$. This is a special case of
Definition \ref{d.matrix.entries} with $f=f_{\pi,A}$ where $A(w)=\xi(w)v$,
i.e.,\ $A=\xi\otimes v$. The more general matrix entries of Definition
\ref{d.matrix.entries} are linear combinations of these more restricted
\textquotedblleft rank-1 type\textquotedblright\ entries.

\item Matrix entries are smooth functions on $G$.

\item If $G=\mathbb{R}^{d}$, all polynomials are matrix entries. Indeed: if
$q$ is a polynomial of degree $\leq n$, take the representation space $V$ to
be all polynomials $p$ of degree $\leq n$, where $\pi(x)p=p(\,\cdot+x)$. If
$\xi_{0}(p)=p(0)$ is the evaluation linear functional, then $\xi_{0}%
(\pi(x)q)=q(x)$, so $q$ is a matrix entry.

\item Even if $G$ is complex, we will have a reason to consider matrix entries
associated to representations of $G$ that are not holomorphic.
\end{enumerate}
\end{remark}

\begin{lemma}
\label{l.m.e.s.a.alg} For any Lie group $G$, the set of matrix entries on $G$
forms a self-adjoint complex algebra.
\end{lemma}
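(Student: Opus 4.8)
The plan is to verify the three closure properties defining a self-adjoint complex algebra: closure under linear combinations, closure under products, and closure under complex conjugation (the adjoint operation). Throughout I will work with the description $f_{\pi,A}(x)=\mathrm{Tr}(\pi(x)A)$ from Definition \ref{d.matrix.entries}, and the key idea is that each operation on matrix entries corresponds to a natural operation on the underlying representations.

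First, for linearity, given two matrix entries $f_{\pi,A}$ and $f_{\sigma,B}$ associated to representations $(\pi,V_{\pi})$ and $(\sigma,V_{\sigma})$, I would form the direct sum representation $\pi\oplus\sigma$ on $V_{\pi}\oplus V_{\sigma}$. For scalars $c,d\in\mathbb{C}$, the endomorphism $C:=cA\oplus dB$ of $V_{\pi}\oplus V_{\sigma}$ satisfies $f_{\pi\oplus\sigma,C}(x)=\mathrm{Tr}((\pi\oplus\sigma)(x)(cA\oplus dB))=c\,\mathrm{Tr}(\pi(x)A)+d\,\mathrm{Tr}(\sigma(x)B)$, since the trace of a block-diagonal operator is the sum of the traces of its diagonal blocks. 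Hence $c f_{\pi,A}+d f_{\sigma,B}=f_{\pi\oplus\sigma,C}$ is again a matrix entry. For products, I would instead use the tensor product representation $\pi\tensor\sigma$ on $V_{\pi}\tensor V_{\sigma}$, together with the endomorphism $A\tensor B$. The multiplicativity of the trace under tensor products, $\mathrm{Tr}((\pi(x)A)\tensor(\sigma(x)B))=\mathrm{Tr}(\pi(x)A)\,\mathrm{Tr}(\sigma(x)B)$, gives $f_{\pi\tensor\sigma,A\tensor B}=f_{\pi,A}\cdot f_{\sigma,B}$, so products of matrix entries are matrix entries.

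For the self-adjoint (conjugation-closed) property, given $f_{\pi,A}$ I would consider the conjugate representation $\bar{\pi}$ acting on the complex-conjugate vector space $\overline{V_{\pi}}$, defined by $\bar{\pi}(x)=\overline{\pi(x)}$ in a fixed basis; this is again a finite-dimensional complex representation of $G$. Taking the endomorphism $\bar{A}$ (the entrywise conjugate of $A$), one computes $f_{\bar{\pi},\bar{A}}(x)=\mathrm{Tr}(\overline{\pi(x)}\,\bar{A})=\overline{\mathrm{Tr}(\pi(x)A)}=\overline{f_{\pi,A}(x)}$, using that complex conjugation is a real-algebra endomorphism of the matrix algebra commuting with the trace. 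Thus $\overline{f_{\pi,A}}=f_{\bar{\pi},\bar{A}}$ is a matrix entry, establishing self-adjointness.

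None of these steps presents a genuine obstacle; the argument is essentially bookkeeping with the standard functorial constructions on representations (direct sum, tensor product, conjugate) and the corresponding trace identities. The only point requiring a small amount of care is to confirm that the constructed maps $\pi\oplus\sigma$, $\pi\tensor\sigma$, and $\bar{\pi}$ are indeed genuine (finite-dimensional complex) representations of $G$ in the sense of Definition \ref{d.matrix.entries} — in particular that they are group homomorphisms into the appropriate general linear groups — but this is immediate from the definitions of these constructions. I would note that no irreducibility or holomorphy hypothesis is needed, consistent with Definition \ref{d.matrix.entries} and Remark \ref{rk.m.e.lin.funct.}.
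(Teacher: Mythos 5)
Your proof is correct and follows essentially the same route as the paper's: sums and scalar multiples via direct sums (the paper uses $\lambda f_{\pi,A}=f_{\pi,\lambda A}$ directly, a cosmetic difference), products via tensor products, and conjugation by fixing a basis and taking the entrywise-conjugate representation $\overline{[\pi]}$ with endomorphism $\overline{[A]}$. No gaps.
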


\begin{proof}
It is straightforward to compute that, for $\lambda\in\mathbb{C}$, $\lambda
f_{\pi,A}=f_{\pi,\lambda A}$, while sums and products satisfy $f_{\pi
,A}+f_{\sigma,B}=f_{\pi\oplus\sigma,A\oplus B}$ and $f_{\pi,A}f_{\sigma
,B}=f_{\pi\otimes\sigma,A\otimes B}$. For complex conjugation, we must define
the complex conjugate of a representation and an endomorphism. This can be
done invariantly, but for our purposes there is no reason not to simply choose
a basis. Given a representation $(\pi,V_{\pi})$ of dimension $d$, choose a
complex-linear isomorphism $\varphi\colon V_{\pi}\rightarrow\mathbb{C}^{d}$,
and let $[\pi(x)]=\varphi\circ\pi(x)\circ\varphi^{-1}$ and $[A]=\varphi\circ
A\circ\varphi^{-1}$. As $d\times d$ complex matrices, both $[\pi(x)]$ and
$[A]$ have complex conjugates $\overline{[\pi(x)]}$ and $\overline{[A]}$,
defined entry-wise. Then
\begin{equation}
\bar{f}_{\pi,A}(x)=\overline{\mathrm{Tr}(\pi(x)A)}=\overline{\mathrm{Tr}%
([\pi(x)][A])}=\mathrm{Tr}(\overline{[\pi(x)]}\,\overline{[A]}).
\label{e.conj.rep}%
\end{equation}
The map $\overline{[\pi]}\colon G\rightarrow\mathrm{GL}(\mathbb{C}^{d})$ given
by $\overline{[\pi]}(x)=\overline{[\pi(x)]}$ is a representation of $G$ on
$\mathbb{C}^{d}$, and \eqref{e.conj.rep} shows that
\[
\bar{f}_{\pi,A}=f_{\overline{[\pi]},\overline{A}}%
\]
is also a matrix entry of $G$. This concludes the proof.
\end{proof}

We now establish two key result about matrix entries.

\begin{theorem}
\label{t.HL^2.density} Let $K$ be a real Lie group of compact type. For any
$s>0$, the matrix entries on $K$ are dense in $L^{2}(K,\rho_{s})$. If $s>0$
and $\tau\in\mathbb{D}(s,s)$, then the holomorphic matrix entries on
$K_{\mathbb{C}}$ are dense in $\mathcal{H}L^{2}(K_{\mathbb{C}},\mu_{s,\tau})$.
\end{theorem}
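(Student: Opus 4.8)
The plan is to prove each statement by showing that the orthogonal complement of the relevant set of matrix entries is trivial, after first reducing to a product structure. By Proposition \ref{p.compact.type.decomp} I would take $K = K_0 \times \mathbb{R}^d$ with $K_0$ compact and the $\mathrm{Ad}(K)$-invariant inner product splitting orthogonally as $\mathfrak{k}_0 \oplus \mathbb{R}^d$. Then $\Delta_K = \Delta_{K_0} + \Delta_{\mathbb{R}^d}$, and choosing an orthonormal basis of $\mathfrak{k}$ adapted to this splitting one checks from Definition \ref{d.Lap.rsu} that $\Delta_{s,\tau}$ is likewise a sum of two commuting operators on $(K_0)_{\mathbb{C}}$ and $\mathbb{C}^d$. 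Hence both $\rho_s$ and $\mu_{s,\tau}$ factor as product densities, the Hilbert spaces factor as Hilbert tensor products $L^2(K,\rho_s) = L^2(K_0,\rho_s^{K_0})\,\hat\otimes\,L^2(\mathbb{R}^d,\rho_s^{\mathbb{R}^d})$ and $\mathcal{H}L^2(K_{\mathbb{C}},\mu_{s,\tau}) = \mathcal{H}L^2((K_0)_{\mathbb{C}},\mu_{s,\tau})\,\hat\otimes\,\mathcal{H}L^2(\mathbb{C}^d,\mu_{s,\tau})$ (the holomorphic factorization following since the reproducing kernel of a product is the product of the reproducing kernels), and products of matrix entries pulled back from the two factors are again matrix entries (cf.\ the product formula in Lemma \ref{l.m.e.s.a.alg}). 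It therefore suffices to prove density on each factor separately.

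On the compact factor $K_0$, the matrix entries form a unital, conjugation-closed, point-separating subalgebra of $C(K_0)$ (Lemma \ref{l.m.e.s.a.alg}, with point separation coming from a faithful finite-dimensional representation), so by Stone--Weierstrass they are uniformly dense, hence dense in $L^2(K_0,\rho_s^{K_0})$ because $\rho_s^{K_0}$ is bounded above and below on the compact $K_0$. For the holomorphic statement on $(K_0)_{\mathbb{C}}$, Remark \ref{rk.mu.s.tau-mu.t.t.equiv} gives $\mu_{s,\tau}\asymp_C\mu_{t,t}$, so the two spaces $\mathcal{H}L^2((K_0)_{\mathbb{C}},\mu_{s,\tau})$ and $\mathcal{H}L^2((K_0)_{\mathbb{C}},\mu_{t,t})$ coincide as sets with equivalent norms and carry the same holomorphic matrix entries; density then reduces to the standard case $\tau = t$, which I would obtain from Peter--Weyl on $K_0$ together with analytic continuation of the resulting matrix entries.

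On the Euclidean factor the matrix entries are exactly the polynomials (Remark \ref{rk.m.e.lin.funct.}). For the real statement, suppose $g\in L^2(\mathbb{R}^d,\rho_s)$ is orthogonal to every polynomial. The exponential integrability in Theorem \ref{the.B.1.16}(7), combined with Cauchy--Schwarz, shows that $\xi\mapsto\int_{\mathbb{R}^d} e^{i\xi\cdot x} g(x)\rho_s(x)\,dx$ extends to an entire function on $\mathbb{C}^d$ whose derivatives at the origin are, up to constants, the moments $\int x^\alpha g\,\rho_s\,dx = 0$; thus this transform vanishes identically, and by Fourier uniqueness $g\,\rho_s = 0$, i.e.\ $g = 0$.

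The remaining holomorphic Euclidean case is the crux. Here $\mu_{s,\tau}$ is, by \eqref{muRk}, a genuinely non-standard Gaussian when $u\ne0$, so the monomials need not be orthogonal and one cannot rescale to the standard Fock measure $\mu_{t,t}$. I would instead argue via the reproducing kernel of $\mathcal{H}L^2(\mathbb{C}^d,\mu_{s,\tau})$, which exists by Remark \ref{pointwise.rem}: the kernels $K(\cdot,w)$ are automatically total, so it suffices to show each one lies in the $L^2(\mu_{s,\tau})$-closure of the holomorphic polynomials. For a Gaussian weight the kernel is an explicit holomorphic exponential of order two, and the main technical step is to show that its Taylor partial sums converge to it in $L^2(\mu_{s,\tau})$, dominating the tail by a Gaussian estimate matched to the decay rate of $\mu_{s,\tau}$. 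Establishing this convergence---equivalently, the totality of the holomorphic coherent states against the non-standard Gaussian---is the part I expect to require the most care, since the off-diagonal term in \eqref{muRk} blocks a direct appeal to the classical orthonormal-monomial computation.
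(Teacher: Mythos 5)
Your overall architecture---split $K\cong K_{0}\times\R^{d}$, treat the compact and Euclidean factors separately, and tensor the results back together---is the same as the paper's, and your real-side arguments are sound (Stone--Weierstrass is a fine substitute for the paper's Peter--Weyl appeal on $K_{0}$, and your moment/entire-characteristic-function argument on $\R^{d}$ correctly replaces the paper's citation of \cite[Theorem 3.6]{Driver1999}). But the step you yourself identify as the crux---density of holomorphic polynomials in $\mathcal{H}L^{2}(\C^{d},\mu_{s,\tau})$ for the non-radial Gaussian \eqref{muRk}---is genuinely missing, and the route you sketch has a circularity hazard. The ``explicit holomorphic exponential'' reproducing kernel of such a space is ordinarily obtained either from an orthonormal basis of monomials (unavailable when $u\neq 0$, exactly as you note) or from the unitarity of the transform onto $\mathcal{H}L^{2}(\C^{d},\mu_{s,\tau})$---but in this paper that unitarity is \emph{deduced from} the density theorem you are proving. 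If you instead guess the kernel and try to verify the reproducing property against an arbitrary $F\in\mathcal{H}L^{2}$, the verification again requires density or some substitute; and $L^{2}(\mu)$-convergence of Taylor partial sums is delicate for non-radial weights, since partial summation is no longer an orthogonal projection. The paper sidesteps all of this by citing \cite[Proposition 3.5]{Driver1999}, which establishes polynomial density for the relevant Gaussians directly; without that citation (or a worked-out dilation argument $F_{r}:=F(r\,\cdot)\to F$ using non-circular pointwise bounds such as those from \cite{Driver2015}), this step remains a hole.

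Two further steps are asserted rather than proved. First, that the natural isometry of $\mathcal{H}L^{2}((K_{0})_{\C},\mu_{s,\tau}^{0})\otimes\mathcal{H}L^{2}(\C^{d},\mu_{s,\tau}^{1})$ into $\mathcal{H}L^{2}(K_{\C},\mu_{s,\tau})$ maps \emph{onto}---equivalently, your claim that the product of the reproducing kernels is the kernel of the product space---is precisely the point the paper flags as requiring ``some small argument'' and cites to the Appendix of \cite{Hall2001b}; your one-line justification restates the claim rather than proving it. Second, in the compact holomorphic case, reducing via $\mu_{s,\tau}\asymp\mu_{t,t}$ to ``the standard case $\tau=t$'' does not land on anything standard: even for $\tau=t=s$ the density statement is the nontrivial onto half of \cite[Theorem 2]{Hall1994}, and that argument runs through the $K$-averaged kernel $\nu_{t}$ of Lemma \ref{l.nu.t.indep.s.u} precisely because one needs a right-$K$-invariant measure to make the holomorphically extended Fourier series orthogonal over a bi-invariant compact exhaustion and hence convergent in $L^{2}$; the kernel $\mu_{t,t}$ itself is only conjugation-invariant, cf.\ \eqref{e.stu.commutation.Rk}, so ``Peter--Weyl plus analytic continuation'' does not by itself control $L^{2}$-convergence of the extended series. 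With these three points supplied---by citation to \cite[Proposition 3.5]{Driver1999}, to \cite{Hall2001b}, and to \cite{Hall1994} (or to the averaging argument via Theorem \ref{t.averaging} sketched at the end of Section \ref{Section Density}), as the paper does---your outline closes up into essentially the paper's proof.
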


\begin{proof}
We consider first the case that $K=\mathbb{R}^{d}$ and $K_{\mathbb{C}%
}=\mathbb{C}^{d}$. Then $\rho_{s}$ is a Gaussian measure on $K$. Since every
polynomial on $\mathbb{R}^{d}$ is a matrix entry, we may appeal to the
classical result that polynomials are dense in $L^{2}$ of Gaussian measures on
$\mathbb{R}^{d}$. (For a proof of a more general result, see \cite[Theorem
3.6]{Driver1999}.) On the complex side, every holomorphic polynomial is a
holomorphic matrix entry, and the measure $\mu_{s,\tau}$ on $\mathbb{C}^{d}$
is Gaussian. Thus, by \cite[Proposition 3.5]{Driver1999}, matrix entries are
dense in $\mathcal{H}L^{2}(\mathbb{C}^{d},\mu_{s,\tau})$. (Note that, in
general, the measure $\mu_{s,\tau}$ is not invariant under multiplication by
$e^{i\theta}$ and monomials of different degrees are not necessarily
orthogonal. Thus the proof of density of holomorphic polynomials in
\cite[Section 1b]{Bargmann1961} does not apply.)

We consider next the case that $K$ is compact. In that case, the heat kernel
density $\rho_{s}$ on $K$ is bounded and bounded away from zero for each fixed
$s>0$. Thus, the Hilbert space $L^{2}(K,\rho_{s})$ is the same as the Hilbert
space $L^{2}(K)$, with a different but equivalent norm. Hence, the density of
matrix entries in $L^{2}(K,\rho_{s})$ follows from the Peter--Weyl theorem. On
the complex side, we appeal to the averaging lemma (Lemma \ref{averaging.lem}%
), which tells us that the Hilbert space $\mathcal{H}L^{2}(K_{\mathbb{C}}%
,\mu_{s,\tau})$ is the same as the Hilbert space $\mathcal{H}L^{2}%
(K_{\mathbb{C}},\nu_{t})$, with a different but equivalent norm. Thus, it
suffices to establish the density of matrix entries in $\mathcal{H}%
L^{2}(K_{\mathbb{C}},\nu_{t})$; this claim follows verbatim from the proof of
the \textquotedblleft onto\textquotedblright\ part of Theorem 2 in
\cite[Section 8]{Hall1994}.

We consider finally the case of a general compact-type group $K$. Recall
(Proposition \ref{p.compact.type.decomp}) that $K$ is isometrically isomorphic
to $K_{0}\times\mathbb{R}^{d}$ for some compact Lie group $K_{0}$ and some
$d\geq0$. Thus, the heat kernel measure $\rho_{s}$ on $K$ factors as a product
of the heat kernel measures $\rho_{s}^{0}$ on $K_{0}$ and $\rho_{s}^{1}$ on
$\mathbb{R}^{d}$. Now, a standard result from measure theory tells us that
there is a unitary map $U$ from $L^{2}(K_{0},\rho_{s}^{0})\otimes
L^{2}(\mathbb{R}^{d},\rho_{s}^{1})$ onto $L^{2}(K,\rho_{s})$ uniquely
determined by the requirement that $U(f_{1}\otimes f_{2})(x_{1},x_{2}%
)=f_{1}(x_{1})f_{2}(x_{2})$. If $f_{1}$ and $f_{2}$ are matrix entries on
$K_{0}$ and $\mathbb{R}^{d}$, respectively, then $f_{1}(x_{1})f_{2}(x_{2})$ is
a matrix entry on $K$ (by an argument very similar to the proof of Lemma
\ref{l.m.e.s.a.alg}). Using the density results for $K_{0}$ and for
$\mathbb{R}^{d}$ and the unitary map $U$, we can easily show that linear
combinations of matrix entries of this sort (which are again matrix entries)
are dense in $L^{2}(K,\rho_{s})$.

On the complex side, $K_{\mathbb{C}}$ is isomorphic to $(K_{0})_{\mathbb{C}%
}\times\mathbb{C}^{d}$. If we restrict our Ad-invariant inner product on
$\mathfrak{k}$ to the Lie algebras of $K_{0}$ and of $\mathbb{R}^{d}$, these
restrictions will also be Ad-invariant. We may then construct left-invariant
metrics on $(K_{0})_{\mathbb{C}}$ and $\mathbb{C}^{d}$ by the same procedure
as for $K_{\mathbb{C}}$. In that case, it is easily verified that the
isomorphism $K_{\mathbb{C}}\cong(K_{0})_{\mathbb{C}}\times\mathbb{C}^{d}$ is
isometric. Thus, the heat kernel measure $\mu_{s,\tau}$ on $K_{\mathbb{C}}$ is
a product of the associated heat kernel measures $\mu_{s,\tau}^{0}$ on
$(K_{0})_{\mathbb{C}}$ and $\mu_{s,\tau}^{1}$ on $\mathbb{C}^{d}$.

Then, as on the real side, we have a unitary map $V$ from $L^{2}%
((K_{0})_{\mathbb{C}},\mu_{s,\tau}^{0})\otimes L^{2}(\mathbb{C}^{d}%
,\mu_{s,\tau}^{1})$ onto $L^{2}(K_{\mathbb{C}},\mu_{s,\tau})$. According to
the Appendix of \cite{Hall2001b}, the restriction of $V$ to the tensor product
of the two $\mathcal{H}L^{2}$ spaces maps \textit{onto} $\mathcal{H}%
L^{2}(K_{\mathbb{C}},\mu_{s,\tau})$. (It is easy to see that $V$ maps the
tensor product of the two $\mathcal{H}L^{2}$ spaces \textit{into}
$\mathcal{H}L^{2}(K_{\mathbb{C}},\mu_{s,\tau})$; it requires some small
argument to show that it maps onto.) Thus, as on the real side, the density
result for $K_{\mathbb{C}}$ reduces to the previously established results for
$(K_{0})_{\mathbb{C}}$ and for $\mathbb{C}^{d}$.
\end{proof}

\begin{proposition}
\label{p.heat.op.on.m.e.} Let $f_{\pi,A}$ be a matrix entry on $K.$ Then%
\begin{align}
\int_{K}\rho_{t}(xy^{-1})f_{\pi,A}(x)~dx  &  =\sum_{n=0}^{\infty}\frac{t^{n}%
}{2^{n}n!}(\Delta_{K})^{n}f_{\pi,A}(x)\nonumber\\
&  =\mathrm{Tr}(\pi(x)e^{tC_{\pi}/2}A) \label{e.heat.op.power.series.1}%
\end{align}
with absolute convergence of the integral on the left-hand side and locally
uniform convergence of the sums on the right-hand side. Here $C_{\pi}%
=\sum_{j=1}^{\dim\mathfrak{k}}\pi_{\ast}(X_{j})^{2},$ where $\pi_{\ast}$ is
the Lie algebra representation associated to the Lie group representation
$\pi.$

Let $f_{\pi,A}$ be a matrix entry on $K_{\mathbb{C}}.$ Then%
\begin{align}
\int_{K_{\mathbb{C}}}f_{\pi,A}(g)\mu_{s,\tau}(g)~dg  &  =\sum_{n=0}^{\infty
}\frac{1}{2^{n}n!}(\Delta_{s,\tau})^{n}f_{\pi,A}(e),\nonumber\\
&  =\mathrm{Tr}(\pi(x)e^{D_{\pi,s,\tau}/2}A) \label{e.heat.op.power.series.2}%
\end{align}
with absolute convergence of the integral on the left-hand side and locally
uniform convergence of the sum on the right-hand side. Here%
\[
D_{\pi,s,\tau}=\sum_{j=1}^{\dim\mathfrak{k}}\left[  \left(  s-\frac{t}%
{2}\right)  \pi_{\ast}(X_{j})^{2}+\frac{t}{2}\pi_{\ast}(Y_{j})^{2}%
-u\,\pi_{\ast}(X_{j})\pi_{\ast}(Y_{j})\right]
\]
where $\pi_{\ast}$ is the Lie algebra representation associated to the Lie
group representation $\pi.$
\end{proposition}

We note that unless $K$ is compact (as opposed to merely being of compact
type), matrix entries on $K$ are typically not in $L^{2}(K,dx)$ and thus not
in the usual domain of definition of the heat operator $e^{t\Delta_{K}/2}$.
Similarly, matrix entries on $K_{\mathbb{C}}$ are typically not in the usual
domain of the heat operator $e^{\Delta_{s,\tau}/2}$.

\begin{proof}
The proposition is an immediate consequence of Langland's theorem
(cf.\ \cite[Theorem 2.1]{Robinson1991}). See also \cite[Lemma 8]{Hall1994}. If
one assumes it is valid to differentiate under the integral and to integrate
by parts, one can prove the proposition easily; see the proof of \cite[Theorem
2.13]{Driver1995}.
\end{proof}

\begin{remark}
\label{rk.integrability} If $f$ is a matrix entry on $K$ or $K_{\mathbb{C}}$,
then by Lemma \ref{l.m.e.s.a.alg}, $|f|^{2}$ is also a matrix entry. Thus, the
absolute convergence of the integral in Proposition \ref{p.heat.op.on.m.e.}
tells us that $f$ is in $L^{2}(K,\rho_{t})$ or $L^{2}(K_{\mathbb{C}}%
,\mu_{s,\tau}).$
\end{remark}

\section{The Segal--Bargmann Transform\label{Section SBT}}

We analyze the complex-time Segal--Bargmann transform for a connected Lie
group of compact type in two stages. In the first stage, we consider a
transform $M_{\tau}$ (see Definition \ref{Mtau.def} below) defined on matrix
entries using a power-series definition of the heat operator. Using the
strategy outlined in Section \ref{proofsketch.sec} along with density results
in Theorem \ref{t.HL^2.density}, we show that $M_{\tau}$ maps a dense subspace
of $L^{2}(K,\rho_{s})$ isometrically onto a dense subspace of $\mathcal{H}%
L^{2}(K_{\mathbb{C}},\mu_{s,\tau})$. Thus, $M_{\tau}$ extends to a unitary map
$\overline{M}_{s,\tau}$ of $L^{2}(K,\rho_{s})$ onto $\mathcal{H}%
L^{2}(K_{\mathbb{C}},\mu_{s,\tau})$.

In the second stage, we show that the heat kernel $\rho_{t}(x)$ on $K$ has a
holomorphic extension in both $t$ and $x$, denoted $\rho_{\mathbb{C}}%
(\cdot,\cdot)$. We then prove that the unitary map $\overline{M}_{s,\tau}$ may
be computed by \textquotedblleft convolution\textquotedblright\ with the
holomorphically extended heat kernel. That is to say,
\[
(\overline{M}_{s,\tau}f)(z)=\int_{K}\rho_{\mathbb{C}}(\tau,zk^{-1})f(k)\,dk
\]
for all $s>0$, $f\in L^{2}(K,\rho_{s})$, $\tau\in\mathbb{D}(s,s)$, and $z\in
K_{\mathbb{C}}$.

The advantage of the two-stage approach to the proof is that we can use the
unitary map $\overline{M}_{s,\tau}$ to establish the existence of the
holomorphic extension of the heat kernel, thus avoiding the
representation-theoretic estimates used in \cite{Hall1994}. Although this
approach was used already in \cite{Driver1995}, a number of details are
different in the complex case. We therefore provide full proofs here.

\subsection{Constructing a Unitary Map\label{unitaryMap.sec}}

As usual, we work on a connected Lie group $K$ of compact type, with a fixed
$\mathrm{Ad}(K)$-invariant inner product on its Lie algebra $\mathfrak{k}$.
According to Theorem \ref{t.HL^2.density}, the space of matrix entries is
dense in $L^{2}(K,\rho_{s})$ and the space of holomorphic matrix entries is
dense in $\mathcal{H}L^{2}(K_{\mathbb{C}},\mu_{s,\tau})$.

We now define a transform $M_{\tau}$ directly by its action on matrix entries.
Let $f_{\pi,A}$ be a matrix entry on $K$ acting on a complex vector space
$V_{\pi}$. By the universal property of complexifications, the representation
$\pi$ extends uniquely to a holomorphic representation $\pi_{\mathbb{C}}$ of
$K_{\mathbb{C}}$ on $V_{\pi}$. Hence, the matrix entry $f_{\pi,A}$ has an
analytic continuation as well,
\[
(f_{\pi,A})_{\mathbb{C}}(g)=\mathrm{Tr}(\pi_{\mathbb{C}}(g)A)=f_{\pi
_{\mathbb{C}},A}(g),\qquad g\in K_{\mathbb{C}}.
\]

\begin{definition}
\label{Mtau.def}For $\tau\in\mathbb{C}_{+}$, define $M_{\tau}$ on matrix
entries on $K$ as
\[
M_{\tau}f_{\pi,A}=\left[  \sum_{n=0}^{\infty}\frac{(\tau/2)^{n}}{n!}%
(\Delta_{K})^{n}f_{\pi,A}\right]  _{\mathbb{C}}.
\]

\end{definition}

Note that, by (\ref{e.heat.op.power.series.1}), $M_{\tau}f_{\pi,A}$ is again a
matrix entry, and thus has a holomorphic extension.

\subsubsection{Complex Vector Fields and Commutation Relations}

We would now like to emulate the proof of the Segal--Bargmann isometry for the
$\mathbb{R}^{d}$ case outlined in Section \ref{proofsketch.sec}. To that end,
we must introduce the complex vector fields generalizing the complex
derivatives $\partial/\partial z_{j}$ and $\partial/\partial\bar{z}_{j}$ in
the Euclidean context.

\begin{definition}
\label{d.del.delbar} Let $G$ be a complex Lie group with Lie algebra
$\mathfrak{g}$ and let $X$ be an element of $\mathfrak{g}$. The
\textbf{holomorphic} and \textbf{antiholomorphic vector fields} associated to
$X$ are complex vector fields $\partial_{X}$ and $\bar{\partial}_{X}$ on $G$
defined by
\begin{equation}
\partial_{X}\equiv\frac{1}{2}\left(  \tilde{X}-i~\widetilde{JX}\right)
\quad\text{and}\quad\bar{\partial}_{X}\equiv\frac{1}{2}\left(  \tilde
{X}+i~\widetilde{JX}\right)  . \label{e.def.del_V}%
\end{equation}

\end{definition}

\noindent In the special case $G=\mathbb{C}^{d}$, if $X=\partial/\partial
x_{j}$ then $\partial_{X}=\partial/\partial z_{j}$ and $\bar{\partial}%
_{X}=\partial/\partial\bar{z}_{j}$. By (\ref{Jholomorphic}), if $X\in
\mathfrak{g}$ and $F$ is holomorphic on $G$ then%
\begin{align}
\partial_{X}F  &  =\tilde{X}F,\qquad\bar{\partial}_{X}%
F=0\label{DelHolomorphic}\\
\partial_{X}\bar{F}  &  =0,\quad\qquad\,\bar{\partial}_{X}F=\tilde{X}F.
\label{DelAnti}%
\end{align}

\begin{lemma}
\label{l.intermediate.commutator} If $X,V\in\mathfrak{g}$, then
\[
\lbrack\partial_{V},\widetilde{JX}]=i[\partial_{V},\tilde{X}],\qquad
\text{and}\qquad\lbrack\bar{\partial}_{V},\widetilde{JX}]=-i[\bar{\partial
}_{V},\tilde{X}].
\]

\end{lemma}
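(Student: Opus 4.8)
The plan is to derive both commutator identities purely algebraically, reducing everything to the fact that $X\mapsto\widetilde{X}$ is a Lie algebra homomorphism, $[\widetilde{Y},\widetilde{W}]=\widetilde{[Y,W]}$, together with the $\C$-bilinearity of the bracket recorded in \eqref{Jbracket}. First I would expand $[\partial_V,\widetilde{JX}]$ using the definition $\partial_V=\tfrac12(\widetilde{V}-i\,\widetilde{JV})$ and the bilinearity of the commutator of vector fields, obtaining $[\partial_V,\widetilde{JX}]=\tfrac12\bigl([\widetilde{V},\widetilde{JX}]-i[\widetilde{JV},\widetilde{JX}]\bigr)$, and likewise express $i[\partial_V,\widetilde{X}]=\tfrac{i}{2}\bigl([\widetilde{V},\widetilde{X}]-i[\widetilde{JV},\widetilde{X}]\bigr)$. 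The whole lemma then comes down to evaluating the three basic brackets $[\widetilde{V},\widetilde{JX}]$, $[\widetilde{JV},\widetilde{JX}]$, and $[\widetilde{JV},\widetilde{X}]$.

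Each of these is a one-line computation via the homomorphism property and \eqref{Jbracket}. Since $[V,JX]=J[V,X]$ we get $[\widetilde{V},\widetilde{JX}]=\widetilde{J[V,X]}$, and likewise $[\widetilde{JV},\widetilde{X}]=\widetilde{J[V,X]}$; for the remaining one, applying \eqref{Jbracket} twice gives $[JV,JX]=J[V,JX]=J^2[V,X]=-[V,X]$, hence $[\widetilde{JV},\widetilde{JX}]=-\widetilde{[V,X]}$. Substituting these into the two expansions yields $[\partial_V,\widetilde{JX}]=\tfrac12\bigl(\widetilde{J[V,X]}+i\,\widetilde{[V,X]}\bigr)$ and $i[\partial_V,\widetilde{X}]=\tfrac12\bigl(\widetilde{J[V,X]}+i\,\widetilde{[V,X]}\bigr)$, which establishes the first identity. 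The antiholomorphic identity then follows from the identical computation with the sign of the $i\,\widetilde{JV}$ term reversed throughout.

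The only point that requires any attention — the ``main obstacle,'' such as it is — is the step $[JV,JX]=-[V,X]$, where the factor $J^2=-I$ is responsible for interchanging the roles of $\widetilde{J[V,X]}$ and $\widetilde{[V,X]}$ in a way that produces precisely the factor $i$ on the right-hand side. Everything else is routine bookkeeping. In particular, I would emphasize that no analytic input is needed: unlike \eqref{DelHolomorphic}--\eqref{DelAnti}, this lemma holds as an identity between complex vector fields on $G$, with no holomorphicity assumption on the functions being differentiated.
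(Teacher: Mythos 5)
Your proof is correct and takes essentially the same route as the paper's: both arguments are purely algebraic, resting only on $[\widetilde{Y},\widetilde{W}]=\widetilde{[Y,W]}$ and \eqref{Jbracket}, with the paper merely organizing the identical substitutions more compactly by sliding $J$ across the bracket via $[\widetilde{JW_{1}},\widetilde{W_{2}}]=[\widetilde{W_{1}},\widetilde{JW_{2}}]$ instead of evaluating the three elementary brackets $[\widetilde{V},\widetilde{JX}]$, $[\widetilde{JV},\widetilde{JX}]$, $[\widetilde{JV},\widetilde{X}]$ separately as you do. Your computations check out (including the key step $[JV,JX]=-[V,X]$), and your observation that no holomorphicity assumption enters is likewise consistent with the paper's treatment.
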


\begin{proof}
By \eqref{Jbracket}, for any $W_{1},W_{2}\in\mathfrak{g}$, $[JW_{1}%
,W_{2}]=J[W_{1},W_{2}]=[W_{1},JW_{2}]$ and therefore by the definition of the
Lie bracket,%
\[
\lbrack\widetilde{JW_{1}},\tilde{W}_{2}]=\widetilde{[JW_{1},W_{2}%
]}=\widetilde{J[W_{1},W_{2}]}=\widetilde{[W_{1},JW_{2}]}=[\tilde{W}%
_{1},\widetilde{JW_{2}}].
\]
We can then compute from the definition that
\[
\lbrack\partial_{V},\widetilde{JX}]=\frac{1}{2}[\tilde{V}-i\widetilde{JV}%
,\widetilde{JX}]=\frac{1}{2}[\widetilde{JV}-i\widetilde{JJV},\tilde{X}%
]=\frac{1}{2}[\widetilde{JV}+i\tilde{V},\tilde{X}]=i[\partial_{V},\tilde{X}].
\]
The calculation for $\bar{\partial}_{V}$ is similar.
\end{proof}

We now specialize to the case $G=K_{\mathbb{C}}$ for a compact-type Lie group
$K$.

\begin{definition}
Fix an orthonormal basis $\{X_{1},\ldots,X_{d}\}$ for $\mathfrak{k}$, and let
$\partial_{j} := \partial_{X_{j}}$ as in \eqref{e.def.del_V}. Then set
\begin{equation}
\label{e.del^2}\partial^{2} \equiv\sum_{j=1}^{d} \partial_{j}^{2},
\qquad\text{and} \qquad\bar\partial^{2} \equiv\sum_{j=1}^{d} \bar\partial
_{j}^{2}.
\end{equation}

\end{definition}

A routine calculation shows that the operators $\partial^{2}$ and
$\bar{\partial}^{2}$ are well-defined, independent of the choice of
orthonormal basis.

\begin{lemma}
\label{l.del^2.commutes.Rkhat} The operators $\partial^{2}$ and $\bar
{\partial}^{2}$ commute with the right action of $K$ on $K_{\mathbb{C}}.$
\end{lemma}

\noindent This is a routine computation and is left to the reader.

This brings us to the main commutator result of this section.

\begin{proposition}
\label{p.del^2.A.commute} For any $A\in\mathfrak{k}_{\mathbb{C}}$,
\[
[\partial^{2},\widetilde{A}]=[\bar\partial^{2},\widetilde{A}]=0.
\]

\end{proposition}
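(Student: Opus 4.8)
The plan is to reduce the statement to the case $A\in\mathfrak{k}$ (rather than the full complexification $\mathfrak{k}_{\mathbb{C}}$) and then extract the vanishing from the $\mathrm{Ad}$-invariance of the inner product via a symmetry argument. The map $A\mapsto\widetilde{A}$ is real-linear, so $[\partial^2,\widetilde{A}]$ and $[\bar\partial^2,\widetilde{A}]$ are real-linear in $A$; since $\mathfrak{k}_{\mathbb{C}}=\mathfrak{k}\oplus J\mathfrak{k}$, it is enough to treat $A\in\mathfrak{k}$ and $A\in J\mathfrak{k}$. For the latter, Lemma \ref{l.intermediate.commutator} gives $[\partial_j,\widetilde{JX}]=i[\partial_j,\widetilde{X}]$ for each $j$, and hence, by the Leibniz rule $[\partial_j^2,B]=\partial_j[\partial_j,B]+[\partial_j,B]\partial_j$ and summation over $j$,
\[
[\partial^2,\widetilde{JX}]=i\,[\partial^2,\widetilde{X}],\qquad X\in\mathfrak{k}.
\]
Thus the case $A\in J\mathfrak{k}$ follows from the case $A\in\mathfrak{k}$, and I need only handle $A\in\mathfrak{k}$. (This is precisely the role that Lemma \ref{l.intermediate.commutator} plays here; note that the naive claim $\widetilde{JA}=i\widetilde{A}$ fails on general functions, so the reduction genuinely requires the lemma.)

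For $A\in\mathfrak{k}$, I would first record the single-factor commutator. Expanding $\partial_j=\tfrac12(\widetilde{X_j}-i\widetilde{JX_j})$ and using $[\widetilde{W_1},\widetilde{W_2}]=\widetilde{[W_1,W_2]}$ together with $[JX_j,A]=J[X_j,A]$ from \eqref{Jbracket},
\[
[\partial_j,\widetilde{A}]=\tfrac12\big(\widetilde{[X_j,A]}-i\widetilde{J[X_j,A]}\big)=\partial_{[X_j,A]},
\]
which is legitimate since $[X_j,A]\in\mathfrak{k}$. Applying the Leibniz rule as above and summing gives
\[
[\partial^2,\widetilde{A}]=\sum_{j=1}^{d}\big(\partial_j\,\partial_{[X_j,A]}+\partial_{[X_j,A]}\,\partial_j\big).
\]

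The crux --- and the only place where compact type is used --- is the cancellation coming from $\mathrm{Ad}$-invariance. Since $\langle\cdot,\cdot\rangle_{\mathfrak{k}}$ is $\mathrm{Ad}$-invariant, $\mathrm{ad}_A$ is skew-symmetric (cf.\ the discussion after Definition \ref{d.Ad-inv}); writing $[A,X_j]=\sum_k a_{kj}X_k$ with $a_{kj}=\langle[A,X_j],X_k\rangle_{\mathfrak{k}}$, this means $a_{kj}=-a_{jk}$. Since $V\mapsto\partial_V$ is linear and $[X_j,A]=-[A,X_j]$, we obtain $\partial_{[X_j,A]}=-\sum_k a_{kj}\partial_k$ and therefore
\[
[\partial^2,\widetilde{A}]=-\sum_{j,k}a_{kj}\big(\partial_j\partial_k+\partial_k\partial_j\big).
\]
The factor $\partial_j\partial_k+\partial_k\partial_j$ is symmetric under $j\leftrightarrow k$ while $a_{kj}$ is antisymmetric, so the double sum vanishes; I expect this symmetric/antisymmetric cancellation to be the entire content of the argument. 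Note that it requires \emph{no} commutation relation among the $\partial_j$ themselves --- only $\mathrm{Ad}$-invariance. The antiholomorphic case is identical: the same computation (with the sign in \eqref{e.def.del_V} flipped) yields $[\bar\partial_j,\widetilde{A}]=\bar\partial_{[X_j,A]}$, while Lemma \ref{l.intermediate.commutator} supplies $[\bar\partial_j,\widetilde{JX}]=-i[\bar\partial_j,\widetilde{X}]$ for the reduction, and the same cancellation gives $[\bar\partial^2,\widetilde{A}]=0$.
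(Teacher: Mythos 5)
Your proposal is correct, and it overlaps with the paper's proof in one half while taking a genuinely different route in the other. The decomposition $A=V+JW$ with $V,W\in\mathfrak{k}$, and the reduction of the $J\mathfrak{k}$ case to the $\mathfrak{k}$ case via Lemma \ref{l.intermediate.commutator} together with the Leibniz identity $[\partial_j^2,B]=\partial_j[\partial_j,B]+[\partial_j,B]\partial_j$, is exactly the paper's argument (including the $-i$ variant for $\bar\partial^2$). The divergence is in the core case $A\in\mathfrak{k}$: the paper obtains $[\partial^2,\widetilde{V}]=0$ by invoking the group-level statement of Lemma \ref{l.del^2.commutes.Rkhat} --- that $\partial^2$ and $\bar\partial^2$ commute with right translation $R_k$ for $k\in K$, a lemma whose proof is left to the reader and rests on the basis-independence of $\partial^2$ plus the fact that $\{\mathrm{Ad}_{k^{-1}}X_j\}_{j=1}^d$ is again orthonormal --- and then differentiating along $k=e^{tV}$ at $t=0$. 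You instead work infinitesimally from the start: the identity $[\partial_j,\widetilde{A}]=\partial_{[X_j,A]}$ (correctly derived from $[\widetilde{W_1},\widetilde{W_2}]=\widetilde{[W_1,W_2]}$ and \eqref{Jbracket}), followed by the skew-symmetry $a_{kj}=-a_{jk}$ of $\mathrm{ad}_A$ coming from $\mathrm{Ad}$-invariance, which turns $[\partial^2,\widetilde{A}]$ into a contraction of an antisymmetric coefficient against the symmetric expression $\partial_j\partial_k+\partial_k\partial_j$, hence zero. Your computation is in effect the $t$-derivative at $t=0$ of the paper's basis-rotation argument, carried out explicitly at the Lie-algebra level; what it buys is self-containedness (no appeal to the unproved Lemma \ref{l.del^2.commutes.Rkhat}, nor to the basis-independence of $\partial^2$, since your cancellation works in the fixed basis) and a precise identification of where the compact-type hypothesis enters, and you are right that no commutation among the $\partial_j$ is needed. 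What the paper's route buys is the stronger finite (group-level) invariance \eqref{e.del^2.commutes.Rkhat}, of independent interest, and economy by recycling the proof pattern of Lemma \ref{lem.B.1.12}. Both arguments are valid; yours is arguably the more elementary for this particular proposition.
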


\begin{proof}
As any $A\in\mathfrak{k}_{\mathbb{C}}$ has the form $A=V+JW$ for some
$V,W\in\mathfrak{k}$, it suffices by linearity to prove that $\partial^{2}$
and $\bar{\partial}^{2}$ commute with $\widetilde{V}$ and $\widetilde{JV}$ for
any $V\in\mathfrak{k}$ . For the former statement, apply Lemma
\ref{l.del^2.commutes.Rkhat} to the right action of $k=e^{tV}$, and
differentiate at $t=0$ to yield the result. For the second statement, we
employ Lemma \ref{l.intermediate.commutator} and compute as follows.
\begin{align*}
\lbrack\partial^{2},\widetilde{JV}]=\sum_{j=1}^{d}[\partial_{j}\partial
_{j},\widetilde{JV}]  &  =\sum_{j=1}^{d}\left(  \partial_{j}[\partial
_{j},\widetilde{JV}]+[\partial_{j},\widetilde{JV}]\partial_{j}\right) \\
&  =i\sum_{j=1}^{d}\left(  \partial_{j}[\partial_{j},\tilde{V}]+[\partial
_{j},\tilde{V}]\partial_{j}\right)  =i\sum_{j=1}^{d}[\partial_{j}\partial
_{j},\tilde{V}]=i[\partial^{2},\tilde{V}]
\end{align*}
and we already showed that $[\partial^{2},\tilde{V}]=0$. A similar calculation
proves the result for $\bar{\partial}^{2}$.
\end{proof}

\begin{corollary}
\label{c.del^2.delbar^2.Laps.commute} The operators $\partial^{2}$,
$\bar{\partial}^{2}$, $\Delta_{K}$, and $\Delta_{s,\tau}$ all mutually commute.
\end{corollary}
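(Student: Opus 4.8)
The plan is to reduce all six pairwise commutators to one of two facts already in hand: Proposition \ref{p.del^2.A.commute}, which says $\partial^{2}$ and $\bar{\partial}^{2}$ each commute with $\widetilde{A}$ for every $A\in\mathfrak{k}_{\mathbb{C}}$, and Corollary \ref{cor.com}, which will handle the single structural commutator $[\Delta_{\mathfrak{k}},\Delta_{s,\tau}]$. The observation that glues these together is that each of the four operators is a \emph{complex-linear combination of products $\widetilde{A}\,\widetilde{B}$} with $A,B\in\mathfrak{k}_{\mathbb{C}}$. Indeed $\Delta_{\mathfrak{k}}=\sum_{j}\widetilde{X}_{j}^{2}$ and $\Delta_{s,\tau}=\sum_{j}[(s-\tfrac{t}{2})\widetilde{X}_{j}^{2}+\tfrac{t}{2}\widetilde{Y}_{j}^{2}-u\,\widetilde{X}_{j}\widetilde{Y}_{j}]$ are visibly of this form (with $X_{j}\in\mathfrak{k}$ and $Y_{j}=JX_{j}\in\mathfrak{k}_{\mathbb{C}}$), while $\partial^{2}=\sum_{j}\partial_{j}^{2}$ and $\bar{\partial}^{2}=\sum_{j}\bar{\partial}_{j}^{2}$ are too, since $\partial_{j}=\tfrac{1}{2}(\widetilde{X}_{j}-i\widetilde{Y}_{j})$ and $\bar{\partial}_{j}=\tfrac{1}{2}(\widetilde{X}_{j}+i\widetilde{Y}_{j})$ are complex-linear combinations of $\widetilde{X}_{j}$ and $\widetilde{Y}_{j}=\widetilde{JX_{j}}$.

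First I would record the routine fact that, for a fixed operator $C$, the map $D\mapsto[C,D]$ is complex-linear and a derivation, $[C,UV]=[C,U]V+U[C,V]$. Combining this with Proposition \ref{p.del^2.A.commute}, we get that $\partial^{2}$ commutes with each of $\widetilde{X}_{j},\widetilde{Y}_{j}$, hence with $\partial_{j},\bar{\partial}_{j}$ (complex combinations of these), hence with any product and any complex-linear combination of products of these; the same holds for $\bar{\partial}^{2}$. This at once disposes of the five commutators $[\partial^{2},\bar{\partial}^{2}]$, $[\partial^{2},\Delta_{\mathfrak{k}}]$, $[\partial^{2},\Delta_{s,\tau}]$, $[\bar{\partial}^{2},\Delta_{\mathfrak{k}}]$, and $[\bar{\partial}^{2},\Delta_{s,\tau}]$. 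For example $[\partial^{2},\bar{\partial}_{j}]=\tfrac{1}{2}([\partial^{2},\widetilde{X}_{j}]+i[\partial^{2},\widetilde{Y}_{j}])=0$, so $[\partial^{2},\bar{\partial}_{j}^{2}]=0$ by the derivation property, and summing over $j$ gives $[\partial^{2},\bar{\partial}^{2}]=0$.

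The remaining commutator $[\Delta_{\mathfrak{k}},\Delta_{s,\tau}]$ is the genuinely structural one and does not come from Proposition \ref{p.del^2.A.commute}. Here I would invoke Corollary \ref{cor.com} with $G=K_{\mathbb{C}}$, $V=\mathfrak{k}_{\mathbb{C}}$, and the $\mathrm{Ad}(K)$-invariant inner product $\langle\cdot,\cdot\rangle_{a,b,c}$ whose Laplacian is $\Delta_{s,\tau}$ (by Proposition \ref{p.Labc} and the reparametrization \eqref{e.Phi.inverse.formula}). This yields $[\Delta_{s,\tau},\widetilde{X}_{j}]=0$ for each $X_{j}\in\mathfrak{k}$; expanding $\Delta_{\mathfrak{k}}=\sum_{j}\widetilde{X}_{j}^{2}$ and again using that $[\Delta_{s,\tau},\cdot]$ is a derivation gives $[\Delta_{s,\tau},\Delta_{\mathfrak{k}}]=\sum_{j}([\Delta_{s,\tau},\widetilde{X}_{j}]\widetilde{X}_{j}+\widetilde{X}_{j}[\Delta_{s,\tau},\widetilde{X}_{j}])=0$. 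The hard part — really the only subtle point — will be getting the direction of this last step right: one must treat $\Delta_{s,\tau}$ as the operator $\Delta_{V}$ of Corollary \ref{cor.com} and $\Delta_{\mathfrak{k}}$ as the sum of squares, \emph{not} the reverse, because Corollary \ref{cor.com} guarantees commutation only with $\widetilde{A}$ for $A\in\mathfrak{k}=\mathrm{Lie}(K)$, whereas $\Delta_{s,\tau}$ also contains the directions $\widetilde{Y}_{j}=\widetilde{JX_{j}}$ with $JX_{j}\notin\mathfrak{k}$. A secondary point to keep in mind is that $A\mapsto\widetilde{A}$ is only real-linear, so $\bar{\partial}_{j}$ is not itself of the form $\widetilde{A}$; the argument uses only that it is a complex-linear combination of such vector fields, which is all the linearity of the commutator requires.
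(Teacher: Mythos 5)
Your proof is correct and follows essentially the same route as the paper's: five of the six commutators are reduced to Proposition \ref{p.del^2.A.commute} (the paper expands $\partial_j^2,\bar\partial_j^2$ in terms of $\widetilde{X}_j^2,\widetilde{Y}_j^2,\widetilde{X}_j\widetilde{Y}_j$ rather than first showing $[\partial^2,\bar\partial_j]=0$, a trivially equivalent bookkeeping choice), and $[\Delta_{\mathfrak{k}},\Delta_{s,\tau}]=0$ comes from Corollary \ref{cor.com} applied with $\Delta_{s,\tau}$ as the Laplacian of the $\mathrm{Ad}(K)$-invariant inner product $\langle\cdot,\cdot\rangle_{a,b,c}$ on $\mathfrak{k}_{\mathbb{C}}$, exactly as in the paper. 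Your two cautionary remarks---that Corollary \ref{cor.com} must be applied with $\Delta_{s,\tau}$ (not $\Delta_{\mathfrak{k}}$) in the role of $\Delta_V$ since it only gives commutation with $\widetilde{A}$ for $A\in\mathfrak{k}$, and that $A\mapsto\widetilde{A}$ is only real-linear so $\bar\partial_j$ is not itself a left-invariant vector field---are both accurate and are precisely the points left implicit in the paper's shorter write-up.
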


Here we regard $\Delta_{K}$ as a left-invariant operator on $K_{\mathbb{C}}.$

\begin{proof}
Since $\Delta_{K}$ and $\Delta_{s,\tau}$ are linear combinations of squares of
left-invariant vector fields on $K_{\mathbb{C}}$, Proposition
\ref{p.del^2.A.commute} shows that they both commute with $\partial^{2}$ and
$\bar{\partial}^{2}$. Similarly, letting $Y_{j}=JX_{j}$, since $\partial
_{j}^{2}$ and $\bar{\partial}_{j}^{2}$ are linear combinations of $\tilde
{X}_{j}^{2}$, $\tilde{Y}_{j}^{2}$, and $\tilde{X}_{j}\tilde{Y}_{j}=\tilde
{Y}_{j}\tilde{X}_{j}$ (cf. \eqref{Jbracket}), the commutator $[\partial
^{2},\bar{\partial}^{2}]=0$ also follows from Proposition
\ref{p.del^2.A.commute}. Now, since $\Delta_{s,\tau}$ is the Laplacian
associated to an $\mathrm{Ad}(K)$-invariant inner product on $\mathfrak{k}%
_{\mathbb{C}}$, it commutes with the right action of $K.$ Thus, $\Delta
_{s,\tau}$ commutes with each $\tilde{X}_{j}$ and thus with $\Delta_{K}.$
\end{proof}

\begin{remark}
The fact that $[\partial^{2},\bar{\partial}^{2}]=0$ holds quite generally.
Indeed, on any complex manifold, if $Z=\sum_{j}a_{j}(z)\frac{\partial
}{\partial z_{j}}$ and $W=\sum_{j}b_{j}(z)\frac{\partial}{\partial z_{j}}$ are
two holomorphic vector fields, than a simple computation shows that
$[Z,\overline{W}]=0$.
\end{remark}

\subsubsection{The Transform $M_{\tau}$, and the Isomorphism $\overline
{M}_{s,\tau}$}

The usefulness of the $\partial^{2}$ and $\bar{\partial}^{2}$ operators and
the commutation result in Corollary \ref{c.del^2.delbar^2.Laps.commute} in the
present context lies in the following result.

\begin{lemma}
\label{deltaKsum.lem}Let $s>0$ and $\tau\in\mathbb{D}(s,s)$. Let
$\Delta_{s,\tau}$ denote the $K_{\mathbb{C}}$ Laplacian of Definition
\ref{d.Lap.rsu}, and let $\Delta_{K}$ denote the Laplacian of $K$ acting on
$C^{\infty}(K_{\mathbb{C}})$ as usual. Then
\[
s\Delta_{K}=\Delta_{s,\tau}+\tau\partial^{2}+\bar{\tau}\bar{\partial}^{2}%
\]
where all operators appearing in this identity are mutually commuting.
\end{lemma}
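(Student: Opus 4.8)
The plan is to prove the operator identity by a direct coefficient-by-coefficient computation on each index $j$, since the mutual commutativity assertion has already been supplied by Corollary \ref{c.del^2.delbar^2.Laps.commute}. The one structural fact I would invoke at the outset is that $\widetilde{X}_j$ and $\widetilde{Y}_j$ commute: because $Y_j = JX_j$ and $[X_j, JX_j] = J[X_j,X_j] = 0$ by \eqref{Jbracket}, we have $\widetilde{X}_j\widetilde{Y}_j = \widetilde{Y}_j\widetilde{X}_j$. This is precisely what makes the squares $\partial_j^2$ and $\bar{\partial}_j^2$ expand cleanly, and it is the same commutativity that was used in Corollary \ref{c.del^2.delbar^2.Laps.commute}. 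With this in hand the identity is purely a matter of matching real coefficients, and indeed the form of $\Delta_{s,\tau}$ in Definition \ref{d.Lap.rsu} was engineered so that the matching succeeds.

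Concretely, I would first expand, using $\partial_j = \tfrac12(\widetilde{X}_j - i\widetilde{Y}_j)$ and $\bar{\partial}_j = \tfrac12(\widetilde{X}_j + i\widetilde{Y}_j)$ together with the commutativity above,
\[
\partial_j^2 = \tfrac14\left(\widetilde{X}_j^2 - 2i\widetilde{X}_j\widetilde{Y}_j - \widetilde{Y}_j^2\right), \qquad \bar{\partial}_j^2 = \tfrac14\left(\widetilde{X}_j^2 + 2i\widetilde{X}_j\widetilde{Y}_j - \widetilde{Y}_j^2\right).
\]
Next, writing $\tau = t + iu$ and $\bar{\tau} = t - iu$, I would combine these and collect terms, using $\tau + \bar{\tau} = 2t$ and $i(\bar{\tau} - \tau) = 2u$, to obtain
\[
\tau\,\partial_j^2 + \bar{\tau}\,\bar{\partial}_j^2 = \tfrac{t}{2}\,\widetilde{X}_j^2 - \tfrac{t}{2}\,\widetilde{Y}_j^2 + u\,\widetilde{X}_j\widetilde{Y}_j.
\]
Adding the $j$-th summand of $\Delta_{s,\tau}$, namely $\left(s - \tfrac{t}{2}\right)\widetilde{X}_j^2 + \tfrac{t}{2}\widetilde{Y}_j^2 - u\,\widetilde{X}_j\widetilde{Y}_j$, the $\widetilde{Y}_j^2$-terms cancel ($\tfrac{t}{2} - \tfrac{t}{2} = 0$), the cross terms cancel ($u - u = 0$), and the $\widetilde{X}_j^2$-coefficient collapses to $\left(s - \tfrac{t}{2}\right) + \tfrac{t}{2} = s$. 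Thus the $j$-th contribution is $s\,\widetilde{X}_j^2$, and summing over $j$ yields $s\Delta_{\mathfrak{k}} = \sum_j s\,\widetilde{X}_j^2$, as claimed.

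I do not anticipate a genuine obstacle here: the statement is an algebraic identity among left-invariant differential operators, and every ingredient is already in place. The only point requiring a word of care is that the cross terms $\widetilde{X}_j\widetilde{Y}_j$ must be treated as ordinary products (not symmetrized), which is legitimate precisely because $\widetilde{X}_j$ and $\widetilde{Y}_j$ commute; I would flag this explicitly so that the cancellation of the $u$-coefficients is unambiguous. Finally, the mutual commutativity of $\partial^2$, $\bar{\partial}^2$, $\Delta_{\mathfrak{k}}$, and $\Delta_{s,\tau}$ appearing in the statement of the lemma is not reproved but simply cited from Corollary \ref{c.del^2.delbar^2.Laps.commute}.
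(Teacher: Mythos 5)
Your proposal is correct and follows essentially the same route as the paper: the paper likewise expands $\partial_j^2$ and $\bar{\partial}_j^2$ in terms of $\widetilde{X}_j$ and $\widetilde{Y}_j$ using $[\widetilde{X}_j,\widetilde{Y}_j]=0$ (via \eqref{Jbracket}), writes $\tau=t+iu$, and cancels coefficients summand-by-summand to get $s\widetilde{X}_j^2$, citing Corollary \ref{c.del^2.delbar^2.Laps.commute} for the commutativity claim. The only difference is cosmetic: the paper first computes $\partial_j^2+\bar{\partial}_j^2$ and $\partial_j^2-\bar{\partial}_j^2$ and then combines them, whereas you expand each square directly and collect coefficients, which is the identical computation in a slightly different order.
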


\begin{proof}
Fix an orthonormal basis $\{X_{1},\ldots,X_{d}\}$ of $\mathfrak{k}$. For ease
of reading, let $Y_{j}=JX_{j}$. To begin, we compute that, for each $j$,
\begin{align}
\partial_{j}^{2}+\bar{\partial}_{j}^{2}  &  =\frac{1}{4}(\tilde{X}_{j}%
-i\tilde{Y}_{j})^{2}+\frac{1}{4}(\tilde{X}_{j}+i\widetilde{Y}_{j})^{2}%
=\frac{1}{2}(\tilde{X}_{j}^{2}-\tilde{Y}_{j}^{2}),\label{e.del^2-delbar^2}\\
\partial_{j}^{2}-\bar{\partial}_{j}^{2}  &  =\frac{1}{4}(\tilde{X}_{j}%
-i\tilde{Y}_{j})^{2}-\frac{1}{4}(\tilde{X}_{j}+i\tilde{Y}_{j})^{2}=-i\tilde
{X}_{j}\tilde{Y}_{j}%
\end{align}
where we have used the fact that $[\tilde{X}_{j},\tilde{Y}_{j}]=0$ (cf.\ \eqref{Jbracket}).

Now, let $\tau=t+iu$. Then for each $j$,
\[
\tau\partial_{j}^{2}+\bar{\tau}\bar{\partial}_{j}^{2}=t(\partial_{j}^{2}%
+\bar{\partial}_{j}^{2})+iu(\partial_{j}^{2}-\bar{\partial}_{j}^{2})=\frac
{t}{2}(\tilde{X}_{j}^{2}-\tilde{Y}_{j}^{2})+u\tilde{X}_{j}\tilde{Y}_{j}.
\]
Thus, we have
\begin{equation}
\left[  \left(  s-\frac{t}{2}\right)  \tilde{X}_{j}^{2}+\frac{t}{2}\tilde
{Y}_{j}^{2}-u\tilde{X}_{j}\tilde{Y}_{j}\right]  +\tau\partial_{j}^{2}%
+\bar{\tau}\bar{\partial}_{j}^{2}=s\tilde{X}_{j}^{2}. \label{e.tau.del^2.K}%
\end{equation}
Summing \eqref{e.tau.del^2.K} on $j$ proves the lemma.
\end{proof}

We can now prove that $M_{\tau}$ is a bijection from the space of matrix
entries on $K$ to the space of holomorphic matrix entries on $K_{\mathbb{C}}$,
isometric from $L^{2}(K,\rho_{s})$ into $L^{2}(K_{\mathbb{C}},\mu_{s,\tau})$.

\begin{theorem}
\label{t.isometry.K} Let $f$ be a matrix entry function on $K$. Then for $s>0$
and $\tau\in\mathbb{D}(s,s)$,
\begin{equation}
\Vert M_{\tau}f\Vert_{L^{2}(K_{\mathbb{C}},\mu_{s,\tau})}=\Vert f\Vert
_{L^{2}(K,\rho_{s})}. \label{isomMatrixEntries}%
\end{equation}
Moreover, every holomorphic matrix entry $F$ on $K_{\mathbb{C}}$ has the form
$F=M_{\tau}f$ for some matrix entry $f$ on $K$.
\end{theorem}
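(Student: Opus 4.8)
The plan is to turn the Euclidean heuristic of Section~\ref{proofsketch.sec} into a rigorous argument, exploiting that \emph{every} function appearing below is a matrix entry, so that each ``heat operator'' is a genuinely convergent (indeed finite-dimensional, by \eqref{e.exp.L}) matrix exponential and no analytic subtlety about convergence arises. The organizing idea is to express \emph{both} sides of \eqref{isomMatrixEntries} as the value at the identity $e$ of a power-series heat operator applied to the single matrix entry $|f_{\mathbb{C}}|^{2}=f_{\mathbb{C}}\overline{f_{\mathbb{C}}}$, where $f_{\mathbb{C}}$ is the holomorphic extension of $f$. For the domain norm, $|f|^{2}$ is a matrix entry on $K$ by Lemma~\ref{l.m.e.s.a.alg}, so Proposition~\ref{p.heat.op.on.m.e.} gives $\Vert f\Vert_{L^{2}(K,\rho_{s})}^{2}=\int_{K}\rho_{s}|f|^{2}\,dk=(e^{\frac{s}{2}\Delta_{K}}|f|^{2})(e)$ (the absolute convergence there also guarantees $f\in L^{2}(K,\rho_{s})$). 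Since $\Delta_{\mathfrak{k}}$ on $K_{\mathbb{C}}$ differentiates only in directions tangent to $K$, one has $(\Delta_{\mathfrak{k}}^{n}\phi)(e)=(\Delta_{K}^{n}(\phi|_{K}))(e)$ for any smooth $\phi$; applying this with $\phi=|f_{\mathbb{C}}|^{2}$ (whose restriction to $K$ is $|f|^{2}$) and summing the series yields $\Vert f\Vert_{L^{2}(K,\rho_{s})}^{2}=(e^{\frac{s}{2}\Delta_{\mathfrak{k}}}|f_{\mathbb{C}}|^{2})(e)$.

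For the range side I would first rewrite $M_{\tau}f$ through the holomorphic vector fields. By \eqref{equ.mtf} and \eqref{e.heat.op.series}, the holomorphic extension of $e^{\frac{\tau}{2}\Delta_{\mathfrak{k}}}f$ is $e^{\frac{\tau}{2}\Delta_{\mathfrak{k}}}f_{\mathbb{C}}$, and since $f_{\mathbb{C}}$ is holomorphic, \eqref{DelHolomorphic} gives $\Delta_{\mathfrak{k}}f_{\mathbb{C}}=\partial^{2}f_{\mathbb{C}}$; hence $M_{\tau}f=e^{\frac{\tau}{2}\partial^{2}}f_{\mathbb{C}}$ and, conjugating, $\overline{M_{\tau}f}=e^{\frac{\bar{\tau}}{2}\bar{\partial}^{2}}\overline{f_{\mathbb{C}}}$. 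The key computation is then
\[
|M_{\tau}f|^{2}=\left(e^{\frac{\tau}{2}\partial^{2}}f_{\mathbb{C}}\right)\left(e^{\frac{\bar{\tau}}{2}\bar{\partial}^{2}}\overline{f_{\mathbb{C}}}\right)=e^{\frac{\tau}{2}\partial^{2}+\frac{\bar{\tau}}{2}\bar{\partial}^{2}}\,|f_{\mathbb{C}}|^{2}.
\]
To justify the second equality, note by \eqref{DelHolomorphic}--\eqref{DelAnti} that $\partial^{2}$ annihilates the antiholomorphic factor and $\bar{\partial}^{2}$ the holomorphic one; a Leibniz expansion then shows that on the product $f_{\mathbb{C}}\overline{f_{\mathbb{C}}}$ the operator $\partial^{2}$ acts only through $f_{\mathbb{C}}$ and $\bar{\partial}^{2}$ only through $\overline{f_{\mathbb{C}}}$, so $(\partial^{2})^{m}(\bar{\partial}^{2})^{n}|f_{\mathbb{C}}|^{2}=\big((\partial^{2})^{m}f_{\mathbb{C}}\big)\big((\bar{\partial}^{2})^{n}\overline{f_{\mathbb{C}}}\big)$, and since $[\partial^{2},\bar{\partial}^{2}]=0$ (Corollary~\ref{c.del^2.delbar^2.Laps.commute}) the exponentials combine.

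Now I would compute the range norm. As $|M_{\tau}f|^{2}$ is a matrix entry on $K_{\mathbb{C}}$ and $\mu_{s,\tau}$ is the time-$1$ heat kernel of the elliptic operator $\Delta_{s,\tau}$, Proposition~\ref{p.heat.op.on.m.e.} gives $\Vert M_{\tau}f\Vert_{L^{2}(K_{\mathbb{C}},\mu_{s,\tau})}^{2}=(e^{\frac{1}{2}\Delta_{s,\tau}}|M_{\tau}f|^{2})(e)$. Substituting the key computation and using that $\Delta_{s,\tau}$, $\partial^{2}$, $\bar{\partial}^{2}$ mutually commute (Corollary~\ref{c.del^2.delbar^2.Laps.commute}), so that the exponentials combine by Remark~\ref{rem.commyl}, this equals $(e^{\frac{1}{2}\Delta_{s,\tau}+\frac{\tau}{2}\partial^{2}+\frac{\bar{\tau}}{2}\bar{\partial}^{2}}|f_{\mathbb{C}}|^{2})(e)$. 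By Lemma~\ref{deltaKsum.lem} the operator in the exponent is exactly $\frac{s}{2}\Delta_{\mathfrak{k}}$, so the range norm equals $(e^{\frac{s}{2}\Delta_{\mathfrak{k}}}|f_{\mathbb{C}}|^{2})(e)$, matching the domain norm and proving \eqref{isomMatrixEntries}.

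Surjectivity is then immediate and I would handle it directly. Given a holomorphic matrix entry $F=f_{\sigma,B}$ on $K_{\mathbb{C}}$ with $\sigma$ a holomorphic representation, set $\pi:=\sigma|_{K}$, whose holomorphic extension is $\pi_{\mathbb{C}}=\sigma$. Since $C_{\sigma}=\sum_{j}\sigma_{\ast}(X_{j})^{2}$ (cf.\ \eqref{e.Casimir}) is a fixed endomorphism, the matrix exponential $e^{\tau C_{\sigma}/2}$ is invertible, so putting $A:=e^{-\tau C_{\sigma}/2}B$ and $f:=f_{\pi,A}$, formula \eqref{equ.mtf} yields $M_{\tau}f=f_{\sigma,e^{\tau C_{\sigma}/2}A}=f_{\sigma,B}=F$. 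The main obstacle is not this last step but the bookkeeping in the isometry: confirming that all the ``heat operators'' really are matrix exponentials on the finite-dimensional endomorphism spaces (so that combining exponents is legitimate), and carefully verifying the two reductions to the value at $e$ --- the Leibniz/annihilation identity behind the key computation, and the restriction argument identifying $(e^{\frac{s}{2}\Delta_{\mathfrak{k}}}|f_{\mathbb{C}}|^{2})(e)$ with the $K$-norm.
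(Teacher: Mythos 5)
Your proposal is correct and follows essentially the same route as the paper's own proof: both sides of \eqref{isomMatrixEntries} are reduced via Proposition \ref{p.heat.op.on.m.e.} to power-series heat operators at $e$, the identity $|M_{\tau}f|^{2}=e^{\frac{\tau}{2}\partial^{2}}e^{\frac{\bar{\tau}}{2}\bar{\partial}^{2}}(f_{\mathbb{C}}\bar{f}_{\mathbb{C}})$ is combined with Corollary \ref{c.del^2.delbar^2.Laps.commute}, Remark \ref{rem.commyl}, and Lemma \ref{deltaKsum.lem} to produce $(e^{\frac{s}{2}\Delta_{\mathfrak{k}}}|f_{\mathbb{C}}|^{2})(e)$, which is identified with the $K$-norm by restriction, and surjectivity is obtained exactly as in the paper by inverting the finite-dimensional exponential $e^{\tau C_{\pi}/2}$. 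Your extra care with the Leibniz/annihilation step and the finite-dimensionality of all exponentials merely makes explicit what the paper calls ``straightforward,'' so there is nothing to correct.
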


The proof of (\ref{isomMatrixEntries}) follows the strategy outlined in
Section \ref{proofsketch.sec}, using left-invariant vector fields in place of
the partial derivatives in the Euclidean case. A key step in the argument
requires us to combine exponentials, which is possible only if the operators
in the exponent commute. It is at this point that we use the commutativity
result in Corollary \ref{c.del^2.delbar^2.Laps.commute}.

\begin{proof}
Let $F=M_{\tau}f$. The matrix entry $f$ on $K$ has a holomorphic extension
$f_{\mathbb{C}}$ to $K_{\mathbb{C}}$. Now, $\Delta_{K},$ viewed as a
left-invariant differential operator on $K_{\mathbb{C}}$, is a sum of squares
of left-invariant vector fields; thus, it preserves the space of holomorphic
functions. Thus, we have that $((\Delta_{K})^{n}f)_{\mathbb{C}}=(\Delta
_{K})^{n}(f_{\mathbb{C}})$ for all $n\geq0$. It follows that $F$ may be
computed as $F=e^{\tau\Delta_{K}/2}(f_{\mathbb{C}})$. Since $f_{\mathbb{C}}$
is holomorphic, we may use (\ref{DelHolomorphic}) to rewrite this relation as%
\[
F=e^{\tau\partial^{2}/2}(f_{\mathbb{C}}).
\]
It is then straightforward, using (\ref{DelHolomorphic}) and (\ref{DelAnti}),
to see that%
\[
\left\vert F\right\vert ^{2}=e^{\tau\partial^{2}/2}e^{\bar{\tau}\bar{\partial
}^{2}/2}(f_{\mathbb{C}}\bar{f}_{\mathbb{C}}).
\]
Thus, using Proposition \ref{p.heat.op.on.m.e.}, we may compute the norm of
$F$ as
\begin{align}
\Vert F\Vert_{L^{2}(K_{\mathbb{C}},\mu_{s,\tau})}^{2}  &  =\left(
e^{\Delta_{s,\tau}/2}|F|^{2}\right)  (e)\nonumber\\
&  =\left(  e^{\Delta_{s,\tau}/2}e^{\tau\partial^{2}/2}e^{\bar{\tau}%
\bar{\partial}^{2}/2}(f_{\mathbb{C}}\bar{f}_{\mathbb{C}})\right)  (e).
\label{normFcalc}%
\end{align}

By the commutativity result in Corollary \ref{c.del^2.delbar^2.Laps.commute},
we may combine the exponents in the last expression in (\ref{normFcalc}). Note
that there are no domain issues to worry about here: All the exponentials in
(\ref{normFcalc}) are defined by power series and since $f_{\mathbb{C}}\bar
{f}_{\mathbb{C}}$ is a matrix entry (cf.\ Lemma \ref{l.m.e.s.a.alg}), all
exponentials are acting in a fixed finite-dimensional subspace of functions on
$K_{\mathbb{C}}$. Using Lemma \ref{deltaKsum.lem}, (\ref{normFcalc}) therefore
becomes%
\[
\Vert F\Vert_{L^{2}(K_{\mathbb{C}},\mu_{s,\tau})}^{2}=(e^{s\Delta_{K}%
/2}\left\vert f_{\mathbb{C}}\right\vert ^{2})(e)=(e^{s\Delta_{K}/2}\left\vert
f\right\vert ^{2})(e).
\]
The last equality holds because $e$ belongs to $K$ and $\Delta_{K}$ is a sum
of squares of left-invariant vector fields associated to elements of
$\mathfrak{k}$. Using Proposition \ref{p.heat.op.on.m.e.} again, we finally
conclude that%
\[
\Vert F\Vert_{L^{2}(K_{\mathbb{C}},\mu_{s,\tau})}^{2}=\left\Vert f\right\Vert
_{L^{2}(K,\rho_{s})}^{2}%
\]
establishing (\ref{isomMatrixEntries}).

Suppose now that $F$ is a holomorphic matrix entry on $K_{\mathbb{C}}$; that
is, $F=f_{\pi_{\mathbb{C}},A}$ for some finite-dimensional holomorphic
representation $\pi_{\mathbb{C}}$ of $K_{\mathbb{C}}$. Then $\left.
F\right\vert _{K}=f_{\pi,A}$, where $\pi$ is the restriction of $\pi
_{\mathbb{C}}$ to $K$. We may then define%
\[
f=e^{-\frac{\tau}{2}\Delta_{K}}(\left.  F\right\vert _{K})=f_{\pi
,e^{-\frac{\tau}{2}C_{\pi}}A}.
\]
Then $f$ is a matrix entry and we have $M_{\tau}f=(e^{\frac{\tau}{2}\Delta
_{K}}f)_{\mathbb{C}}=F$.
\end{proof}

\begin{theorem}
\label{MstUnitary.thm}The map $M_{\tau}$ has a unique continuous extension to
$L^{2}(K,\rho_{s})$, denoted $\overline{M}_{s,\tau}$, and this extension is a
unitary map from $L^{2}(K,\rho_{s})$ onto $\mathcal{H}L^{2}(K_{\mathbb{C}}%
,\mu_{s,\tau})$.
\end{theorem}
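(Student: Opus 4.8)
The plan is a standard extension-by-density argument in which all of the substantive analytic content has already been packaged into Theorems \ref{t.isometry.K} and \ref{t.HL^2.density}; the present theorem is essentially the bookkeeping that assembles them. First I would invoke the first density statement of Theorem \ref{t.HL^2.density} to record that the space $\mathcal{M}$ of matrix entries on $K$ is dense in $L^{2}(K,\rho_{s})$, and then Theorem \ref{t.isometry.K} to record that $M_{\tau}|_{\mathcal{M}}$ is a linear map into $L^{2}(K_{\mathbb{C}},\mu_{s,\tau})$ satisfying $\Vert M_{\tau}f\Vert_{L^{2}(K_{\mathbb{C}},\mu_{s,\tau})}=\Vert f\Vert_{L^{2}(K,\rho_{s})}$, i.e.\ an isometry. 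Since a linear isometry defined on a dense subspace of a Hilbert space has a unique continuous (and automatically isometric) extension to the whole space, this immediately yields the asserted extension $\overline{M}_{s,\tau}\colon L^{2}(K,\rho_{s})\to L^{2}(K_{\mathbb{C}},\mu_{s,\tau})$; uniqueness follows from continuity plus density, and isometry makes $\overline{M}_{s,\tau}$ injective.

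It then remains only to identify the range of $\overline{M}_{s,\tau}$ with $\mathcal{H}L^{2}(K_{\mathbb{C}},\mu_{s,\tau})$, which I would do by a two-sided inclusion. For the containment of the range in $\mathcal{H}L^{2}$, I would use that $\mathcal{H}L^{2}(K_{\mathbb{C}},\mu_{s,\tau})$ is a \emph{closed} subspace of $L^{2}(K_{\mathbb{C}},\mu_{s,\tau})$, as guaranteed by Remark \ref{pointwise.rem}: given $f\in L^{2}(K,\rho_{s})$, pick matrix entries $f_{n}\to f$ in $L^{2}(K,\rho_{s})$, so that $M_{\tau}f_{n}\to\overline{M}_{s,\tau}f$ in $L^{2}(K_{\mathbb{C}},\mu_{s,\tau})$; since each $M_{\tau}f_{n}=f_{\pi_{\mathbb{C}},e^{\tau C_{\pi_{\mathbb{C}}}/2}A}$ is a holomorphic matrix entry and hence lies in $\mathcal{H}L^{2}$, closedness forces the limit $\overline{M}_{s,\tau}f$ into $\mathcal{H}L^{2}$ as well.

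For the reverse containment I would use the fact that the image of an isometry is closed (if $\overline{M}_{s,\tau}g_{n}$ converges, then $(g_{n})$ is Cauchy, hence convergent, so the limit lies in the range), together with the surjectivity assertion of Theorem \ref{t.isometry.K} that every holomorphic matrix entry on $K_{\mathbb{C}}$ equals $M_{\tau}f$ for some matrix entry $f$. Thus the (closed) range contains every holomorphic matrix entry, and by the second density statement of Theorem \ref{t.HL^2.density} the holomorphic matrix entries are dense in $\mathcal{H}L^{2}(K_{\mathbb{C}},\mu_{s,\tau})$; a closed set containing a dense subset of $\mathcal{H}L^{2}$ contains all of it. Combining the two inclusions shows the range is exactly $\mathcal{H}L^{2}(K_{\mathbb{C}},\mu_{s,\tau})$, so $\overline{M}_{s,\tau}$ is a surjective isometry, i.e.\ unitary. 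There is no genuine obstacle in this step; the only point demanding care is the closedness of $\mathcal{H}L^{2}$ inside $L^{2}$, which is precisely what legitimizes passing limits through the holomorphy constraint and is supplied by Remark \ref{pointwise.rem}.
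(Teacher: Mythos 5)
Your proposal is correct and follows essentially the same route as the paper: density of matrix entries in $L^{2}(K,\rho_{s})$ (Theorem \ref{t.HL^2.density}), the isometry and surjectivity-onto-holomorphic-matrix-entries of $M_{\tau}$ (Theorem \ref{t.isometry.K}), extension by the bounded linear transformation theorem, and density of holomorphic matrix entries in $\mathcal{H}L^{2}(K_{\mathbb{C}},\mu_{s,\tau})$ to conclude unitarity. You merely make explicit two points the paper leaves implicit --- that the range lands in $\mathcal{H}L^{2}$ because it is closed in $L^{2}$ (Remark \ref{pointwise.rem}), and that an isometry has closed range --- which is a welcome, not a divergent, elaboration.
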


\begin{proof}
Theorem \ref{t.HL^2.density} tells us that $M_{\tau}$ is defined on a dense
subspace of $L^{2}(K,\rho_{s})$. Since $M_{\tau}$ is isometric, the bounded
linear transformation theorem (e.g., Theorem I.7 in \cite{Reed1980}) tells us
that $M_{\tau}$ has a unique continuous extension to a map $\overline
{M}_{s,\tau}$ of $L^{2}(K,\rho_{s})$ into $\mathcal{H}L^{2}(K_{\mathbb{C}}%
,\mu_{s,\tau})$. This extension is easily seen to be isometric, and since (by
Theorem \ref{t.HL^2.density} again) the image of $M_{\tau}$ is dense, the
extension is actually a unitary map.
\end{proof}

For a general $f\in L^{2}(K,\rho_{s})$, the value of $\overline{M}_{s,\tau}$
may be computed by approximating $f$ by a sequence $f_{n}$ of matrix entries
and setting%
\begin{equation}
\overline{M}_{s,\tau}f=\lim_{n\rightarrow\infty}M_{\tau}f_{n}.
\label{mstBarDef}%
\end{equation}
(The bounded linear transformation theorem guarantees that the limit exists
and that the value of $\overline{M}_{s,\tau}$ is independent of the choice of
approximating sequence.) Now, (\ref{mstBarDef}) is not a very convenient way
to compute. In the next section, we will seek a direct way of computing
$\overline{M}_{s,\tau}$, which will also demonstrate that $\overline
{M}_{s,\tau}$ coincides with the way we defined the complex-time
Segal--Bargmann transform in the introduction; cf.\ \eqref{e.SB.Ctime}. A
first step in that direction is proving that $(\overline{M}_{s,\tau}f)(z)$ is
holomorphic in both $\tau$ and $z$.

\begin{lemma}
\label{mstauHolo.lem}Fix $s>0$. For each $f\in L^{2}(K,\rho_{s})$, the
function $(\tau,z)\mapsto(\overline{M}_{s,\tau}f)(z)$ is a holomorphic
function on $\mathbb{D}(s,s)\times K_{\mathbb{C}}$.
\end{lemma}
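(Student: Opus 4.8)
The plan is to realize $(\tau,z)\mapsto(\overline{M}_{s,\tau}f)(z)$ as a locally uniform limit of jointly holomorphic functions and then invoke the Weierstrass--Osgood theorem: a locally uniform limit of holomorphic functions on the complex manifold $\mathbb{D}(s,s)\times K_{\mathbb{C}}$ is again holomorphic. First I would record that for a matrix entry $f=f_{\pi,A}$ the formula \eqref{equ.mtf} gives
\[
(M_{\tau}f)(z)=\mathrm{Tr}\!\left(\pi_{\mathbb{C}}(z)\,e^{\tau C_{\pi_{\mathbb{C}}}/2}A\right),
\]
which is entire in $\tau$ (through the exponential series) and holomorphic in $z$ (as $\pi_{\mathbb{C}}$ is a holomorphic representation), hence jointly holomorphic on $\mathbb{D}(s,s)\times K_{\mathbb{C}}$. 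The same is then true of any finite linear combination of matrix entries.

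Next I would use Theorem \ref{t.HL^2.density} to pick matrix entries $f_{n}\to f$ in $L^{2}(K,\rho_{s})$. Since $\overline{M}_{s,\tau}$ is isometric (Theorem \ref{MstUnitary.thm}), for each fixed $\tau\in\mathbb{D}(s,s)$ we have $\|M_{\tau}f_{n}-\overline{M}_{s,\tau}f\|_{L^{2}(K_{\mathbb{C}},\mu_{s,\tau})}=\|f_{n}-f\|_{L^{2}(K,\rho_{s})}\to 0$. Combining this with the pointwise evaluation bound of Remark \ref{pointwise.rem}, there is for each $z$ a constant $C(z,\tau)$ (the norm of the evaluation functional on $\mathcal{H}L^{2}(K_{\mathbb{C}},\mu_{s,\tau})$) with
\[
\big|(M_{\tau}f_{n})(z)-(\overline{M}_{s,\tau}f)(z)\big|\le C(z,\tau)\,\|f_{n}-f\|_{L^{2}(K,\rho_{s})}.
\]
It therefore suffices to show that $C(z,\tau)$ is bounded on compact subsets of $\mathbb{D}(s,s)\times K_{\mathbb{C}}$; the displayed estimate then forces the jointly holomorphic functions $(M_{\tau}f_{n})(\cdot)$ to converge to $(\overline{M}_{s,\tau}f)(\cdot)$ uniformly on compacta, finishing the argument.

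The main obstacle is exactly this joint local bound on the evaluation functionals, the subtlety being that the reference measure $\mu_{s,\tau}$ varies with $\tau$. I would control $C(z,\tau)$ by the sub-mean-value estimate for holomorphic functions: fixing a holomorphic coordinate ball $B$ about $z$ and using plurisubharmonicity of $|F|^{2}$,
\[
|F(z)|^{2}\le\frac{1}{\mathrm{Leb}(B)}\int_{B}|F|^{2}\,d\mathrm{Leb}\le\frac{1}{\mathrm{Leb}(B)\,\inf_{w\in B}\mu_{s,\tau}(w)}\,\|F\|_{L^{2}(K_{\mathbb{C}},\mu_{s,\tau})}^{2},
\]
so that $C(z,\tau)^{2}\le\big(\mathrm{Leb}(B)\,\inf_{w\in B}\mu_{s,\tau}(w)\big)^{-1}$. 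Since $\mu_{s,\tau}\in C^{\infty}(K_{\mathbb{C}}\to(0,\infty))$ is strictly positive (Definition \ref{d.mu_{s,t,u}}) and the coefficients of the elliptic operator $\Delta_{s,\tau}$ are polynomial in $(t,u)$, the time-$1$ heat kernel $(\tau,w)\mapsto\mu_{s,\tau}(w)$ depends jointly continuously on its parameters; hence $\inf\{\mu_{s,\tau}(w):\tau\in L,\ w\in\overline{B}\}>0$ for every compact $L\subset\mathbb{D}(s,s)$. Taking $B$ uniform over $z$ in a compact set yields the required joint local bound. (In the compact case one may instead invoke the uniform comparison $\mu_{s,\tau}\asymp_{C}\nu_{t}$ of Remark \ref{rk.mu.s.tau-mu.t.t.equiv} to the same effect.) The one point demanding genuine care is the joint continuity of $\mu_{s,\tau}$ in the parameter $\tau$, which is where the technical weight of the proof lies.
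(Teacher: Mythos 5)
Your overall architecture is the same as the paper's: matrix entries give jointly holomorphic functions via \eqref{equ.mtf}, density (Theorem \ref{t.HL^2.density}) plus isometry (Theorem \ref{MstUnitary.thm}) reduce everything to a locally uniform bound on the evaluation functionals, and that bound is in turn reduced to a strictly positive lower bound on $\mu_{s,\tau}(w)$ that is locally uniform \emph{jointly} in $(\tau,w)$. Your sub-mean-value derivation of the evaluation bound (plurisubharmonicity of $|F|^{2}$ over a coordinate ball, with the harmless chart constant comparing Lebesgue and Haar measure, which you should state explicitly since the $L^{2}$ norm is taken against $\mu_{s,\tau}\,d\lambda$ rather than Lebesgue measure) is a legitimate elementary substitute for the paper's citation of \cite[Theorem 3.6]{Driver2015}, whose constant is likewise independent of $(s,\tau)$.

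The one genuine gap is exactly the step you flag: you assert that $(\tau,w)\mapsto\mu_{s,\tau}(w)$ is jointly continuous because the coefficients of $\Delta_{s,\tau}$ are polynomial in $(t,u)$ and each $\mu_{s,\tau}$ is smooth and strictly positive. Smoothness and positivity of each \emph{individual} kernel (Definition \ref{d.mu_{s,t,u}}, Theorem \ref{the.B.1.16}) give no uniformity as $\tau$ varies, and the implication \textquotedblleft continuous coefficients imply continuous heat kernels\textquotedblright\ is a substantive parabolic-stability theorem that the paper's toolkit does not contain and that you neither prove nor cite precisely; as written, the load-bearing claim is asserted rather than established. The paper deliberately avoids any such theorem: it factors $K_{\C}\cong(K_{0})_{\C}\times\C^{d}$ (Proposition \ref{p.complexify}); on the Euclidean factor it reads the locally uniform lower bound off the explicit Gaussian formula \eqref{muRk}; and on the compact factor it uses the averaging machinery --- Theorem \ref{t.averaging} and Lemma \ref{l.nu.t.indep.s.u} give $\mu_{s,\tau}\asymp_{C'(s,\tau)}\mu_{t,t}$, where $C'(s,\tau)$ is continuous in $(s,\tau)$ because (Remark \ref{rk.mu.s.tau-mu.t.t.equiv}) it is controlled by a single fixed heat kernel on $K$ evaluated at time $t/(2st-t^{2}-u^{2})$, while $\mu_{t,t}=\rho_{t}^{\Delta_{1,1}}$ is jointly continuous in $(t,z)$ by Theorem \ref{the.B.1.16} since it is the kernel of one fixed operator. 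Your parenthetical invoking Remark \ref{rk.mu.s.tau-mu.t.t.equiv} is precisely this repair for compact $K$; to close the argument for general compact-type $K$ you must also carry out the product decomposition (strict positivity of each $\mu_{s,\tau}$ alone does not yield the $\tau$-uniformity), after which your proof coincides with the paper's.
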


\begin{proof}
If $f=f_{\pi,A}$ is a matrix entry, then
\[
(\overline{M}_{s,\tau}f_{\pi,A})(z)=(M_{\tau}f_{\pi,A})(z)=\mathrm{Tr}%
(\pi_{\mathbb{C}}(g)e^{\tau C_{\pi}/2}A)
\]
which is easily seen to depend holomorphically on $\tau$ and $z$.

We then approximate an arbitrary $f\in L^{2}(K,\rho_{s})$ by a sequence
$f_{n}$ of matrix entries. Then $M_{\tau}f_{n}=\overline{M}_{s,\tau}f_{n}$
will converge to $\overline{M}_{s,\tau}f$ in $\mathcal{H}L^{2}(K_{\mathbb{C}%
},\mu_{s,\tau})$. It is well known that the evaluation map $F\mapsto F(z)$ on
$\mathcal{H}L^{2}(K_{\mathbb{C}},\mu_{s,\tau})$ is a bounded linear
functional; this is due to the ubiquitous pointwise $L^{2}$ estimates in this
holomorphic space (cf.\ \cite{Driver2015,Hall2000}). We claim that we can
actually find locally uniform bounds on this functional. That is to say: for
each precompact open subset $U$ of $K_{\mathbb{C}}$ and $r\in(0,s)$, there
exists $C=C(r,U)<\infty$ such that, for all $\tau\in\mathbb{D}(s,r)$ and
$F\in\mathcal{H}L^{2}(K_{\mathbb{C}},\mu_{s,\tau})$,%
\begin{equation}
\sup_{z\in U}\left\vert F(z)\right\vert \leq C(r,U) \left\Vert F\right\Vert
_{L^{2}(K_{\mathbb{C}},\mu_{s,\tau})}. \label{localBounds}%
\end{equation}
Assuming this result for the moment, we can conclude that the convergence of
$(\overline{M}_{s,\tau}f_{n})(z)$ to $(\overline{M}_{s,\tau}f)(z)$ is locally
uniform jointly in $(\tau,z)$, and since each function in the sequence is
holomorphic, it follows that the limit $(\overline{M}_{s,\tau}f)(z)$ is
jointly holomorphic in $(\tau,z)$ as claimed.

To establish the bound in \eqref{localBounds}, we observe that the norm of the
pointwise evaluation functional can be estimated in terms of lower bounds on
the density $\mu_{s,\tau}$. For example, \cite[Theorem 3.6]{Driver2015} shows
(in our context) that, for any precompact neighborhood $V$ of the identity
$e$, there is a constant $C(V)$ so that, for all holomorphic $F$ and $z\in
K_{\mathbb{C}}$,
\[
|F(z)|\leq\frac{C(V)}{\inf_{v\in V}\sqrt{\mu_{s,\tau}(vz)}}\Vert F\Vert
_{L^{2}(K_{\mathbb{C}},\mu_{s,\tau})}.
\]
The constant $C(V)$ is determined only by the holomorphic structure of the
group (given by averaging a symmetrized bump function on $V$, applying the
Cauchy integral formula); hence, $C(V)$ is independent of $s$ and $\tau$.
Hence, it suffices to show that $\mu_{s,\tau}(z)$ is bounded strictly above
$0$ locally uniformly in $\tau$ and $z$.

Since $K_{\mathbb{C}}$ factors as $(K_{0})_{\mathbb{C}}\times\mathbb{C}^{d}$
(recall Proposition \ref{p.compact.type.decomp}), the heat kernel $\mu
_{s,\tau}$ also factors over this product. On the $\mathbb{C}^{d}$ side, there
is an explicit formula for $\mu_{s,\tau}(z)$ (given in \eqref{muRk}) which is
manifestly bounded away from zero locally in both $\tau$ and $z$. Thus, it
suffices to assume that $K$ is compact, which we do from now on.

Denote $t=\mathrm{Re}\,\tau$. From the averaging lemma (Lemma
\ref{averaging.lem}) and Proposition \ref{nutIndep.prop}, we see that there is
a strictly positive constant $C^{\prime}(s,\tau)$ such that $\mu_{s,\tau
}\asymp_{C^{\prime}(s,\tau)}\mu_{t,t}$, and the constants can be chosen to
depend continuously on $(s,\tau)$. Note that $\mu_{t,t}$ is the heat kernel
for a single metric, which is therefore a continuous positive function of
$(t,z)\in(0,\infty)\times K_{\mathbb{C}}$. In particular, $\mu_{t,t}(z)$ is
bounded strictly away from $0$ for $(t,z)$ in compact subsets of
$(0,\infty)\times K_{\mathbb{C}}$. It follows from the continuity of the
function $(s,\tau)\mapsto C^{\prime}(s,\tau)$ that the same holds true for
$\mu_{s,\tau}(z)$, establishing \eqref{localBounds} and completing the proof.
\end{proof}

\subsection{The Analytic Continuation of the Heat Kernel}

In this section, we show that the unitary map $\overline{M}_{s,\tau}\colon
L^{2}(K,\rho_{s})\rightarrow\mathcal{H}L^{2}(K_{\mathbb{C}},\mu_{s,\tau})$
constructed in Section \ref{unitaryMap.sec} may be computed as a
\textquotedblleft convolution\textquotedblright\ against a holomorphic
extension of the heat kernel $\rho_{t}$ on $K$. The following theorem makes
this precise.

\begin{theorem}
\label{analyticContHeat.thm}Let $K$ be a compact-type Lie group.

\begin{enumerate}
\item \label{one.point}There exists a unique holomorphic function
$\rho_{\mathbb{C}}:\mathbb{C}_{+}\times K_{\mathbb{C}}\rightarrow\mathbb{C}$
such that for $t>0$ and $x\in K$ we have%
\[
\rho_{\mathbb{C}}(t,x)=\rho_{t}(x).
\]

\item \label{two.point}If $s>0$ and $\tau\in\mathbb{D}(s,s)$, then for each
$z\in K_{\mathbb{C}}$, the function%
\[
x\mapsto\frac{\rho_{\mathbb{C}}(\tau,zx^{-1})}{\rho_{s}(x)}%
\]
belongs to $L^{2}(K,\rho_{s})$.

\item \label{three.point}The unitary map $\overline{M}_{s,\tau}$ may be
computed as%
\[
(\overline{M}_{s,\tau}f)(z)=\int_{K}\rho_{\mathbb{C}}(\tau,zk^{-1})f(k)\,dk
\]
for all $f\in L^{2}(K,\rho_{s})$ and all $z\in K_{\mathbb{C}}$.
\end{enumerate}
\end{theorem}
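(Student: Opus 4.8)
The plan is to \emph{construct} $\rho_{\mathbb{C}}$ out of the unitary map $\overline{M}_{s,\tau}$ already built in Section \ref{unitaryMap.sec}, rather than from representation-theoretic estimates. Using Proposition \ref{p.complexify} I would first reduce to the two factors of $K\cong K_0\times\mathbb{R}^d$: since the Laplacian and metric split over this product, $\rho_t$ factors, and a holomorphic extension of each factor produces one for $\rho_t$ by taking products, with uniqueness on $\mathbb{C}_+\times K_{\mathbb{C}}$ following from the identity theorem (the set $(0,\infty)\times K$ is a uniqueness set: fix $z\in K$ and use one–variable uniqueness in the time variable on a real interval, then fix the time and use uniqueness in $z$ on the totally real submanifold $K$). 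On the $\mathbb{R}^d$ factor the extension is the explicit entire Gaussian $(2\pi\tau)^{-d/2}\exp(-w\cdot w/2\tau)$ of Section \ref{SBclassical.sec}. On the compact factor $K_0$ the key point is that for each $s_0>0$ the function $\rho_{s_0}$ lies in $L^2(K_0,\rho_s)$ (trivially, by compactness), so I may define $\rho_{\mathbb{C}}(s_0+\tau,\cdot):=\overline{M}_{s,\tau}\rho_{s_0}$ for any $s$ large enough that $\tau\in\mathbb{D}(s,s)$. By Lemma \ref{mstauHolo.lem} this is jointly holomorphic in $(\tau,z)$, hence in the combined time variable $\sigma=s_0+\tau$ and $z$; for real $\tau=t$ it restricts on $K_0$ to $e^{\frac{t}{2}\bar\Delta_{K_0}}\rho_{s_0}=\rho_{s_0+t}$ (Corollary \ref{c.heat.kernel.convolution} and the semigroup property of Theorem \ref{the.B.1.16}), which gives both the extension property and, via the identity theorem, independence of the auxiliary choices $s_0,s$. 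Letting $s_0\downarrow0$ and $s\uparrow\infty$ covers all of $\mathbb{C}_+$.

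For part (2), the integral $\int_K|\rho_{\mathbb{C}}(\tau,zx^{-1})|^2\rho_s(x)^{-1}\,dx$ again factors. Over the compact factor $K_0$ the integrand is a continuous function on the compact coset $zK_0$ divided by the continuous, strictly positive density $\rho_s$, so integrability is automatic. Over $\mathbb{R}^d$ it is an explicit Gaussian integral: writing out $|\rho_{\mathbb{C}}(\tau,z-x)|^2\rho_s(x)^{-1}$, the $x$-quadratic part of the exponent is $-\big(\mathrm{Re}(1/\tau)-\tfrac{1}{2s}\big)|x|^2$, so convergence holds exactly when $\mathrm{Re}(1/\tau)=t/(t^2+u^2)>1/(2s)$, which is precisely the condition $\tau\in\mathbb{D}(s,s)$ of Remark \ref{rem.elli}. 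Combining the two factors gives the assertion.

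For part (3) I would verify the convolution identity on the dense set of matrix entries and then extend by continuity. For a matrix entry $f$, real $\tau=t\in(0,2s)$, and $z=x\in K$, the identity $(\overline{M}_{s,t}f)(x)=\int_K\rho_{\mathbb{C}}(t,xk^{-1})f(k)\,dk$ is exactly Corollary \ref{c.heat.kernel.convolution} together with $\rho_{\mathbb{C}}(t,\cdot)|_K=\rho_t$. Both sides are jointly holomorphic in $(\tau,z)$ — the left by Lemma \ref{mstauHolo.lem}, the right by differentiation under the integral sign, where the Gaussian decay of $\rho_{\mathbb{C}}$ in the $\mathbb{R}^d$ directions dominates the polynomial growth of $f$ locally uniformly — so the identity theorem (again on the uniqueness set $(0,2s)\times K$) promotes the identity to all $\tau\in\mathbb{D}(s,s)$ and $z\in K_{\mathbb{C}}$. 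Finally, both sides are bounded linear functionals of $f\in L^2(K,\rho_s)$: the left because $\overline{M}_{s,\tau}$ is bounded (Theorem \ref{MstUnitary.thm}) and pointwise evaluation on $\mathcal{H}L^2(K_{\mathbb{C}},\mu_{s,\tau})$ is continuous, and the right because $\int_K\rho_{\mathbb{C}}(\tau,zk^{-1})f(k)\,dk=\int_K\big(\rho_{\mathbb{C}}(\tau,zk^{-1})/\rho_s(k)\big)f(k)\,\rho_s(k)\,dk$ is, by part (2) and Cauchy–Schwarz, a bounded functional of $f$. Agreement on the dense subspace of matrix entries (Theorem \ref{t.HL^2.density}) then yields the formula for every $f$.

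I expect the main obstacle to be the analytic bookkeeping in the non-compact directions: establishing joint holomorphy of the right-hand integral in part (3) and integrability in part (2) requires locally uniform Gaussian domination that survives the polynomial growth of matrix entries on the $\mathbb{R}^d$ factor, and one must carry out the identity-theorem continuations one variable at a time (first in time, then in space), since $(0,\infty)\times K$ is only a totally real uniqueness set and not an open subset of $\mathbb{C}_+\times K_{\mathbb{C}}$.
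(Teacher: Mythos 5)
Your proposal is correct and follows essentially the same route as the paper: you construct $\rho_{\mathbb{C}}$ on the compact factor by applying $\overline{M}_{s,\tau}$ to a small-time heat kernel $\rho_{s_0}$ (the paper's $\overline{M}_{s,\tau-\e}\rho_\e$, up to the change of variable $\sigma=s_0+\tau$), use Lemma \ref{mstauHolo.lem}, the semigroup property, and uniqueness of analytic continuation on the totally real set $(0,\infty)\times K$ to patch and remove the auxiliary parameters, handle $\mathbb{R}^d$ by the explicit Gaussian (your condition $\mathrm{Re}(1/\tau)>1/(2s)$ is exactly $\tau\in\mathbb{D}(s,s)$, matching \eqref{rhoCRk}), and prove part (3) by checking at real $\tau$ and $z\in K$, continuing analytically, and extending by density via part (2) and Cauchy--Schwarz. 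Your verification of part (3) on matrix entries first is only a cosmetic reorganization of the paper's use of Lemma \ref{mstauHeat.lem}, which was itself proved by that density argument.
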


Since
\[
\rho_{\mathbb{C}}(\tau,zk^{-1})f(k)\,dk=\frac{\rho_{\mathbb{C}}(\tau,zk^{-1}%
)}{\rho_{s}(k)}f(k)~\rho_{s}(k)dk
\]
it follows by the Cauchy--Schwarz inequality and Theorem
\ref{analyticContHeat.thm}\eqref{two.point} that the function $k\mapsto
\rho_{\mathbb{C}}(\tau,zk^{-1})f(k)$ is integrable. Using the decomposition of
$K$ as $K_{0}\times\mathbb{R}^{d}$, where $K_{0}$ is compact (Proposition
\ref{p.compact.type.decomp}), we may easily reduce the general case to the
compact case and the Euclidean case, which we now address separately.

\subsubsection{The Compact Case}

It is possible to construct the holomorphic extension of the heat kernel on
$K$ using the method of \cite[Section 4]{Hall1994}, which is based on a
term-by-term analytic continuation of the expansion of the heat kernel in
terms of characters. Indeed, replacing $t$ by $t+iu$ in the heat kernel makes
no change to the (absolute) convergence estimates in \cite{Hall1994}. (The
time-parameter occurs only linearly in the exponent there, so the absolute
value of each term would be independent of $u$.) On the other hand, the
argument in \cite{Hall1994} requires detailed knowledge of the representation
theory of $K$. We present here a different argument (similar to the proof of
Corollary 4.6 in \cite{Driver1995}) that uses the unitary map $\overline
{M}_{s,\tau}$ of Theorem \ref{MstUnitary.thm} to construct the desired
analytic continuation.

\begin{lemma}
\label{mstauHeat.lem}If $K$ is compact, $s>0,0<t<2s$, and $\overline{M}_{s,t}$
is the unitary map as in Theorem \ref{MstUnitary.thm}, then for any $f\in
L^{2}(K,\rho_{s})$,%
\begin{equation}
(\overline{M}_{s,t}f)\left(  x\right)  =\left(  \rho_{t}\ast f\right)  \left(
x\right)  =\int_{K}\rho_{t}(xk^{-1})f(k)\,dk\text{ }\forall~x\in K\subset
K_{\mathbb{C}}. \label{MstauHeat}%
\end{equation}
(Note: for $K$ compact, $L^{2}\left(  K\right)  =L^{2}(K,\rho_{s})$
independent of $s>0$ and hence $\overline{M}_{s,t}f$ does not really depend on
$s$.)
\end{lemma}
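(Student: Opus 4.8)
The plan is to first establish \eqref{MstauHeat} for matrix entries, where everything can be computed explicitly, and then pass to general $f$ by a density-plus-continuity argument. Throughout we use that, since $K$ is compact, $\rho_s$ is bounded above and bounded away from $0$, so $L^2(K,\rho_s)=L^2(K)$ with equivalent norms, and $\tau=t\in(0,2s)$ lies in $\mathbb{D}(s,s)$, so $\overline{M}_{s,t}$ is the unitary map of Theorem \ref{MstUnitary.thm}.

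First I would treat a matrix entry $f=f_{\pi,A}$. Applying Proposition \ref{p.heat.op.on.m.e.} with $G=K$ (so that $\Delta_{\mathfrak{g}}=\Delta_{\mathfrak{k}}=\Delta_K$ and $C_\pi=\sum_j\pi_\ast(X_j)^2$) gives, for $x\in K$,
\[
\int_{K}\rho_{t}(xk^{-1})f_{\pi,A}(k)\,dk=\bigl(e^{\frac{t}{2}\Delta_{K}}f_{\pi,A}\bigr)(x)=f_{\pi,e^{tC_{\pi}/2}A}(x),
\]
where the first equality is just the convolution form of the heat operator (Corollary \ref{c.heat.kernel.convolution}). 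On the other hand, by the explicit formula \eqref{equ.mtf} we have $M_{t}f_{\pi,A}=f_{\pi_{\mathbb{C}},e^{tC_{\pi}/2}A}$, and since $\pi_{\mathbb{C}}|_{K}=\pi$, restricting to $x\in K$ yields exactly $f_{\pi,e^{tC_{\pi}/2}A}(x)$. Hence $(\overline{M}_{s,t}f_{\pi,A})(x)=(\rho_{t}\ast f_{\pi,A})(x)$ for all $x\in K$, which is \eqref{MstauHeat} on the dense subspace of matrix entries (Theorem \ref{t.HL^2.density}).

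Next I would approximate an arbitrary $f\in L^2(K,\rho_s)=L^2(K)$ by a sequence $f_n$ of matrix entries with $f_n\to f$ in $L^2(K)$. For the left-hand side, $\overline{M}_{s,t}f_n\to\overline{M}_{s,t}f$ in $\mathcal{H}L^2(K_{\mathbb{C}},\mu_{s,t})$ (this is how $\overline{M}_{s,t}$ is defined; cf.\ \eqref{mstBarDef}), and pointwise evaluation at the point $x\in K\subset K_{\mathbb{C}}$ is continuous on this holomorphic space by Remark \ref{pointwise.rem}; therefore $(\overline{M}_{s,t}f_n)(x)\to(\overline{M}_{s,t}f)(x)$. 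For the right-hand side, fix $x\in K$ and use Cauchy--Schwarz:
\[
\bigl|(\rho_t\ast f_n)(x)-(\rho_t\ast f)(x)\bigr|\le\Bigl(\int_K\rho_t(xk^{-1})^2\,dk\Bigr)^{1/2}\,\|f_n-f\|_{L^2(K)},
\]
and the $L^2$-norm of $k\mapsto\rho_t(xk^{-1})$ is finite because $\rho_t$ is continuous and $K$ is compact. Thus $(\rho_t\ast f_n)(x)\to(\rho_t\ast f)(x)$ as well. Since the two sides agree for every $f_n$ and both sequences converge, \eqref{MstauHeat} holds for $f$ at every $x\in K$.

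I do not expect a serious obstacle here: the content is entirely carried by the earlier results, and the only point requiring care is the bookkeeping in the matrix-entry step — checking that the Casimir $C_{\pi_{\mathbb{C}}}$ appearing in the definition of $M_t$ coincides with the $C_\pi$ of Proposition \ref{p.heat.op.on.m.e.} (immediate since $\pi_{\mathbb{C}}$ extends $\pi$, so $\pi_{\mathbb{C},\ast}(X_j)=\pi_\ast(X_j)$), and that restricting the holomorphic extension back to $K$ returns the original matrix entry. The two continuity statements on the two sides live on different Hilbert spaces ($\mathcal{H}L^2(K_{\mathbb{C}},\mu_{s,t})$ versus $L^2(K)$), so the mild subtlety is simply to run both convergences along the same approximating sequence, which is legitimate because convergence in $L^2(K,\rho_s)$ and in $L^2(K)$ coincide for $K$ compact.
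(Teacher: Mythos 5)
Your proposal is correct and takes essentially the same route as the paper's own proof: establish \eqref{MstauHeat} on matrix entries by combining \eqref{equ.mtf} (restricted to $K$, where $\pi_{\mathbb{C}}|_{K}=\pi$) with Proposition \ref{p.heat.op.on.m.e.}, then extend by density of matrix entries, using continuity of pointwise evaluation on $\mathcal{H}L^{2}(K_{\mathbb{C}},\mu_{s,t})$ for the left-hand side and Cauchy--Schwarz for the right. The only cosmetic difference is that you invoke Corollary \ref{c.heat.kernel.convolution} to pass between $\int_{K}\rho_{t}(k)f(xk)\,dk$ and $\int_{K}\rho_{t}(xk^{-1})f(k)\,dk$, whereas the paper performs that change of variables directly.
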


\begin{proof}
By Definition \ref{Mtau.def}, we have that for any matrix entry $f_{\pi,A}$ on
$K,$
\[
(\overline{M}_{s,t}f_{\pi,A})(g)=(M_{t}f_{\pi,A})(g)=\mathrm{Tr}%
(\pi_{\mathbb{C}}(g)e^{tC_{\pi}/2}A),
\]
where $\pi_{\mathbb{C}}$ is the holomorphic extension of $\pi$ from $K$ to
$K_{\mathbb{C}}.$ Thus, by (\ref{e.heat.op.power.series.1}), we have
\[
\left(  \overline{M}_{s,t}f_{\pi,A}\right)  |_{K}(x)=\mathrm{Tr}%
(\pi(x)e^{tC_{\pi}/2}A)\left(  \rho_{t}\ast f_{\pi,A}\right)  \left(
x\right)  .
\]
This suffices to complete the proof as matrix entries are dense in
$L^{2}\left(  K\right)  $ and both $L^{2}\left(  K\right)  \ni f\rightarrow
\left(  \overline{M}_{s,t}f\right)  \left(  x\right)  \in\mathbb{C}$ and
$L^{2}\left(  K\right)  \ni f\rightarrow\left(  \rho_{t}\ast f\right)  \left(
x\right)  \in\mathbb{C}$ are continuous linear functionals on $L^{2}\left(
K\right)  $ for each fixed $x\in K.$ The first assertion holds since
$\overline{M}_{s,\tau}:L^{2}(K,\rho_{s})\rightarrow\mathcal{H}L^{2}%
(K_{\mathbb{C}},\mu_{s,\tau})$ is unitary and pointwise evaluation on
$\mathcal{H}L^{2}(K_{\mathbb{C}},\mu_{s,\tau})$ is continuous and the second
follows by H\"{o}lder's inequality.
\end{proof}

\begin{proof}
[Proof of Theorem \ref{analyticContHeat.thm} in the compact group case]We
begin with point \eqref{one.point}: the space-time analytic continuation of
the heat kernel. Let $0<\delta<r<\infty$, and consider the vertically
symmetric rectangle $U_{\delta,r}=\{\tau\in\mathbb{C}_{+}\colon\delta
<\mathrm{Re}\,\tau<r,\;|\mathrm{Im}\,\tau|<r\}$. Let $0<\epsilon<\delta$, and
fix $s>0$ large enough that $U_{\delta,r}-\epsilon\subset\mathbb{D}(s,s)$. The
function $\rho_{\epsilon}$ is continuous and hence in $L^{2}(K,\rho_{s})$. We
then define $\rho_{\mathbb{C}}\colon U_{\delta,r}\times K_{\mathbb{C}%
}\rightarrow\mathbb{C}$ by
\begin{equation}
\rho_{\mathbb{C}}(\tau,z)=\left(  \overline{M}_{s,\tau-\epsilon}\rho
_{\epsilon}\right)  (z). \label{d.rho.C.compact}%
\end{equation}
By Lemma \ref{mstauHolo.lem}, $\rho_{\mathbb{C}}$ is analytic in both
variables so long as $\tau-\epsilon\in\mathbb{D}(s,s)$; in particular,
$\rho_{\mathbb{C}}$ is analytic on $U_{\delta,r}\times K_{\mathbb{C}}$. For
the moment, it appears a priori that the value of $\rho_{\mathbb{C}}$ depends
on $s$ and $\epsilon$.

Now consider the restriction of $\rho_{\mathbb{C}}$ to $(t,x)\in(U_{\delta
,r}\cap\mathbb{R})\times K$. By lemma \ref{mstauHeat.lem} and the semigroup
property of the heat kernel,
\begin{equation}
\rho_{\mathbb{C}}(t,x)=\left(  \overline{M}_{s,t-\epsilon}\rho_{\epsilon
}\right)  (x)=\left(  \rho_{t-\epsilon}\ast\rho_{\epsilon}\right)  \left(
x\right)  =\rho_{t}(x)\text{ }\forall~x\in K. \label{equ.rtt}%
\end{equation}
Thus, $\rho_{\mathbb{C}}$ is a holomorphic extension of the heat kernel
$\rho_{t}(x)$ in $t$ and $x$. Analytic continuation from $K$ to $K_{\mathbb{C}%
}$ is unique (cf.\ \cite[Lemma 4.11.13]{Varadarajan}), and also from
$U_{\delta,r}\cap\mathbb{R}$ to $U_{\delta,r}$ by elementary complex analysis.
In particular, since $\rho_{t}(x)$ does not depend on $s$ or $\epsilon$,
neither does the function $\rho_{\mathbb{C}}$.

Thus, for each rectangle $U_{\delta,r}$, there is a unique analytic
continuation of the heat kernel to a holomorphic function $\rho_{\mathbb{C}}$
on $U_{\delta,r}\times K_{\mathbb{C}}$. Let $\delta_{n}$ and $r_{n}$ be
sequences with $\delta_{n}\downarrow0$ and $r_{n}\uparrow\infty$, let
$U_{n}=U_{\delta_{n},r_{n}}$, and let $\rho_{\mathbb{C}}^{n}$ be the analytic
continuation of $\rho_{t}(x)$ to $U_{n}$. The rectangles $U_{n}$ are nested
with union $\mathbb{C}_{+}$; since $\rho_{\mathbb{C}}^{n}$ and $\rho
_{\mathbb{C}}^{m}$ agree on $\left(  U_{n\wedge m}\cap\mathbb{R}\right)
\times K$, uniqueness of analytic continuation shows that they agree on their
common domain $U_{n\wedge m}\times K_{\mathbb{C}}$. Thus, there is a globally
defined holomorphic function $\rho_{\mathbb{C}}$ whose value in $U_{n}\times
K_{\mathbb{C}}$ is $\rho_{\mathbb{C}}^{n}$, and thus restricts to $\rho
_{t}(x)$ on $(U_{n}\cap\mathbb{R})\times K$; ergo $\rho_{\mathbb{C}}%
(t,x)=\rho_{t}(x)$ for $t>0$ and $x\in K$, as desired. Uniqueness again
follows from \cite[Lemma 4.11.13]{Varadarajan}. This establishes point \eqref{one.point}.

Point \eqref{two.point} is immediate since $K$ is compact and the function in
question is continuous. For point \eqref{three.point}, we first note that, by
Lemma \ref{mstauHolo.lem}, $(\overline{M}_{s,\tau}f)(z)$ is holomorphic in
$\tau$ and $z$. Meanwhile, since $\rho_{\mathbb{C}}(\tau,zk^{-1})$ is
holomorphic in $\tau$ and $z$ for each fixed $k\in K$, we may use Fubini's
theorem and Morera's theorem to verify that $\int_{K}\rho_{\mathbb{C}}%
(\tau,zk^{-1})f(k)\,dx$ is also holomorphic in $\tau$ and $z$. Since both
sides of the desired equality are holomorphic in $\tau$ and $z$, it suffices
by uniqueness of analytic continuation to verify the result when $\tau
=t\in\left(  0,2s\right)  $ and $z=x$ belongs to $K$. Using Lemma
\ref{mstauHeat.lem} and the defining property of $\rho_{\mathbb{C}}$, the
desired equality thus becomes%
\[
(e^{t\Delta_{K}/2}f)(x)=\int_{K}\rho_{t}(xk^{-1})f(k)\,dk,
\]
which is true. This concludes the proof.
\end{proof}

\subsubsection{The Euclidean Case\label{RkCase.sec}}

The heat kernel $\rho_{s}$ on $\mathbb{R}^{d}$ is explicitly known to be the
Gaussian density mentioned in the introduction:
\[
\rho_{s}(x)=(2\pi s)^{-d/2}\exp\left(  -\frac{|x|^{2}}{2s}\right)
\]
and the density $\mu_{s,\tau}(z)$ in this case has been described in
\eqref{muRk} in the introduction.

\begin{proof}
[Proof of Theorem \ref{analyticContHeat.thm} in the Euclidean case]For point
\eqref{one.point}, the desired holomorphic extension is given by
\begin{equation}
\rho_{\mathbb{C}}(\tau,z):=\left(  \sqrt{2\pi\tau}\right)  ^{-d}\exp\left(
-\frac{z\cdot z}{2\tau}\right)  \label{e.rhoC.Euclidean}%
\end{equation}
where $z\cdot z=\sum_{j=1}^{d}z_{j}^{2}$ and where $\sqrt{2\pi\tau}$ is
defined by the standard branch of the square root\ (with branch cut along the
negative real axis).

Point \eqref{two.point} of the theorem is an elementary computation. Using
additive notation for the group operation, we need to verify that%
\begin{equation}
\int_{\mathbb{R}^{d}}\frac{\left\vert \rho_{\mathbb{C}}(\tau,z-x)\right\vert
^{2}}{\rho_{s}(x)^{2}}\rho_{s}(x)\,dx<\infty\label{rhoNorm}%
\end{equation}
for all $z\in\mathbb{C}^{d}$, provided that $s>0$ and $\tau\in\mathbb{D}(s,s)$
(or, equivalently, provided that $\alpha>0$; cf.\ \eqref{e.alpha}). Equation
\eqref{rhoNorm} is a Gaussian integral whose computation is tedious but
straightforward. (The integral factors into separate integrals over each copy
of $\mathbb{R}$, which may then be evaluated in a computer algebra system.) We
record the result here: if $z=\xi+i\eta$ and $\tau=t+iu$, then%
\begin{equation}
\int_{\mathbb{R}^{d}}\frac{\left\vert \rho_{\mathbb{C}}(\tau,z-x)\right\vert
^{2}}{\rho_{s}(x)^{2}}\rho_{s}(x)\,dx=\left(  \frac{\pi s}{\sqrt{\alpha}%
}\right)  ^{d}\exp\left(  \frac{t/2}{2\alpha}|\xi|^{2}+\frac{s-t/2}{2\alpha
}|\eta|^{2}+\frac{u}{2\alpha}\xi\cdot\eta\right)  \label{rhoCRk}%
\end{equation}
where, as in (\ref{e.alpha}), $\alpha=(2st-t^{2}-u^{2})/4$.

For point \eqref{three.point}, we must show that $(\overline{M}_{s,\tau}f)(z)$
may be computed as%
\begin{equation}
(\overline{M}_{s,\tau}f)(z)=\int_{\mathbb{R}^{d}}\rho_{\mathbb{C}}%
(\tau,z-x)f(x)\,dx \label{mstauPoly}%
\end{equation}
for all $f\in L^{2}(\mathbb{R}^{d},\rho_{s})$. If $f$ is a polynomial (and
thus a matrix entry) and $\tau\in\mathbb{R}$ and $z\in\mathbb{R}^{d}$,
(\ref{mstauPoly}) follows from Proposition \ref{p.heat.op.on.m.e.}.
Furthermore, when $f$ is a polynomial, both sides of (\ref{mstauPoly}) are
holomorphic in $\tau$ and $z$, so the result continues to hold when $\tau
\in\mathbb{C}_{+}$ and $z\in\mathbb{C}^{d}$. Now, both sides of
(\ref{mstauPoly}) depend continuously on $f\in L^{2}(\mathbb{R}^{d},\rho_{s}%
)$---the left-hand side by the unitarity of $\overline{M}_{s,\tau}$ and the
continuity of pointwise evaluation, and the right-hand side by the fact that
$\rho_{\mathbb{C}}(t,z-x)$ is square-integrable in $x$. Thus, we may pass to
the limit starting from polynomials to obtain the result for all $f\in
L^{2}(\mathbb{R}^{d},\rho_{s})$, thus completing the proof of Theorem
\ref{analyticContHeat.thm} in the $\mathbb{R}^{d}$ case.
\end{proof}

We note that, by (\ref{rhoCRk}), we have bounds on the value of $(\overline
{M}_{s,\tau}f)(z)$ in terms of the $L^{2}$ norm of $f$. Since $\overline
{M}_{s,\tau}$ maps isometrically onto $\mathcal{H}L^{2}(\mathbb{C}^{d}%
,\mu_{s,\tau})$, these bounds translate into pointwise bounds in
$\mathcal{H}L^{2}(\mathbb{C}^{d},\mu_{s,\tau})$ as follows:%
\begin{equation}
\left\vert F(\xi+i\eta)\right\vert ^{2}\leq\left(  \frac{\pi s}{\sqrt{\alpha}%
}\right)  ^{d}\exp\left(  \frac{t/2}{2\alpha}|\xi|^{2}+\frac{s-t/2}{2\alpha
}|\eta|^{2}+\frac{u}{2\alpha}\xi\cdot\eta\right)  \left\Vert F\right\Vert
_{L^{2}(\mathbb{C}^{d},\mu_{s,\tau})}^{2}, \label{boundsCk}%
\end{equation}
where $\mu_{s,\tau}$ is given as in \eqref{muRk}. Note that the bounds on
$\left\vert F(z)\right\vert ^{2}$ are, up to a constant, just the reciprocal
of the density $\mu_{s,\tau}$. This is typical behavior for $\mathcal{H}L^{2}$
spaces over $\mathbb{C}^{d}$ with respect to a Gaussian measure.

\subsection{The $s\to\infty$ Limit\label{section s-->infinity}}

Throughout this section, we assume that the compact-type group $K$ is actually
compact and we normalize the Haar measure $dk$ on $K$ to be a probability
measure. Recall that $\nu_{t}\in C^{\infty}\left(  K_{\mathbb{C}}%
,(0,\infty)\right)  $ is the $K$-averaged heat kernel measure, as in
Definition \ref{d.K-av.heat.kernel}.

\begin{proposition}
\label{nutIndep.prop}For all $s>0$ and $\tau=t+iu$ with $\tau\in
\mathbb{D}(s,s),$ we have%
\begin{equation}
\int_{K}\mu_{s,\tau}(gk)~dk=\nu_{t}(g). \label{nutInd}%
\end{equation}
That is to say, the integral on the left-hand side of (\ref{nutInd}) is
independent of $s$ and $u$ and therefore equals its value when $u=0$ and
$s=t,$ which is $\nu_{t}.$
\end{proposition}

For the moment, we give only a heuristic argument for Proposition
\ref{nutIndep.prop}; a full proof requires some functional-analytic
technicalities, which will be provided in Appendix \ref{technicalities.app}.
By Corollary \ref{c.del^2.delbar^2.Laps.commute}, the three terms in the
definition (\ref{e.def.Delta.s.tau}) of $\Delta_{s,\tau}$ all commute with one
another. Thus, \textit{formally}, we can differentiate in the naive way, as if
the terms in the exponents were scalars rather than operators. Assuming this
approach is valid, we would get%
\begin{equation}
\frac{\partial\mu_{s,\tau}}{\partial s}=\sum_{j=1}^{\dim\mathfrak{k}}\tilde
{X}_{j}^{2}\mu_{s,\tau};\quad\frac{\partial\mu_{s,\tau}}{\partial u}%
=\sum_{j=1}^{\dim\mathfrak{k}}\tilde{X}_{j}\tilde{Y}_{j}\mu_{s,\tau}.
\label{diffMustua}%
\end{equation}
We now denote the integral on the left-hand side of (\ref{nutInd}) by
$\nu_{s,\tau}.$ Then (\ref{diffMustua}) would tell us that
\[
\frac{\partial\nu_{s,\tau}}{\partial s}=\sum_{j=1}^{\dim\mathfrak{k}}\tilde
{X}_{j}^{2}\nu_{s,\tau};\quad\frac{\partial\nu_{s,\tau}}{\partial u}%
=\sum_{j=1}^{\dim\mathfrak{k}}\tilde{X}_{j}\tilde{Y}_{j}\nu_{s,\tau}.
\]
But $\nu_{s,\tau}$ is by construction invariant under the right action of $K,$
so that $\tilde{X}_{j}\nu_{s,\tau}=0.$ Since $\tilde{X}_{j}$ commutes with
$\tilde{Y}_{j}=\widetilde{JX}_{j},$ we would find that $\nu_{s,\tau}$ is
independent of $s$ and $u,$ as claimed.

We will use the following well-known result for the heat kernel measure on a
compact Lie group at large time.

\begin{lemma}
\label{compactRhoSinfinity.lem}If $K$ is a compact Lie group, the heat kernel
$\rho_{s}$ converges to the constant 1 uniformly over $K$ (exponentially fast)
as $s\to\infty$.
\end{lemma}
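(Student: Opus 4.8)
The plan is to use the Peter--Weyl decomposition of $L^2(K)$ together with the resulting spectral expansion of the heat kernel. Since $K$ is compact and connected, the closure $\bar{\Delta}_K$ has discrete spectrum and its eigenfunctions are exactly the matrix entries of the irreducible representations: for an irreducible $\pi$, Schur's lemma gives $C_\pi = -\lambda_\pi I$, where $C_\pi$ is the Casimir of \eqref{e.Casimir} and $\lambda_\pi \ge 0$ is the Casimir eigenvalue, with $\lambda_\pi = 0$ precisely when $\pi$ is the trivial representation $\mathbf{1}$ (this is where connectedness enters). By \eqref{e.heat.op.series} the heat operator acts on such a matrix entry by the scalar $e^{-s\lambda_\pi/2}$, so $\rho_s$ admits the absolutely and uniformly convergent character expansion
\[
\rho_s(x) = \sum_\pi d_\pi\, e^{-s\lambda_\pi/2}\,\chi_\pi(x), \qquad d_\pi = \dim V_\pi, \quad \chi_\pi = \mathrm{Tr}\,\pi,
\]
the sum running over equivalence classes of irreducibles. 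The trivial representation contributes the constant term $d_{\mathbf 1}\,e^{0}\,\chi_{\mathbf 1} = 1$, consistent with conservativity $\int_K \rho_s\,dk = 1$ (Theorem \ref{the.B.1.16}).

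First I would isolate that constant term and estimate the remainder uniformly. Using $|\chi_\pi(x)| \le \chi_\pi(e) = d_\pi$ for all $x \in K$, we obtain
\[
\sup_{x\in K}\bigl|\rho_s(x) - 1\bigr| \;\le\; \sum_{\pi \neq \mathbf 1} d_\pi^2\, e^{-s\lambda_\pi/2} \;=\; \rho_s(e) - 1,
\]
where the last equality again uses $\chi_\pi(e)=d_\pi$; thus the entire uniform estimate is reduced to controlling the single number $\rho_s(e)$. To extract the exponential rate, I would invoke the spectral gap: since the nonzero eigenvalues accumulate only at $+\infty$, the quantity $\lambda_1 := \min_{\pi\neq\mathbf 1}\lambda_\pi$ is strictly positive. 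Fixing any $s_0 > 0$ and splitting $e^{-s\lambda_\pi/2} = e^{-(s-s_0)\lambda_\pi/2}\,e^{-s_0\lambda_\pi/2}$, the bound $\lambda_\pi \ge \lambda_1$ on the nontrivial terms yields, for $s \ge s_0$,
\[
\rho_s(e) - 1 = \sum_{\pi\neq\mathbf 1} d_\pi^2\, e^{-(s-s_0)\lambda_\pi/2}\,e^{-s_0\lambda_\pi/2} \le e^{-(s-s_0)\lambda_1/2}\sum_\pi d_\pi^2\, e^{-s_0\lambda_\pi/2} = e^{-(s-s_0)\lambda_1/2}\,\rho_{s_0}(e).
\]
As $\rho_{s_0}(e)$ is a finite value of the smooth function $\rho_{s_0}$, this gives $\sup_{x\in K}|\rho_s(x)-1| \le \rho_{s_0}(e)\,e^{s_0\lambda_1/2}\,e^{-s\lambda_1/2} \to 0$ as $s \to \infty$, which is the claim.

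The only genuinely non-elementary ingredient is the existence of the spectral gap $\lambda_1 > 0$ — equivalently, that the constants are the \emph{entire} kernel of $\bar{\Delta}_K$ and that the remaining spectrum is bounded away from $0$. This is exactly where connectedness of $K$ is used: a disconnected group would have every locally constant function in $\ker\bar{\Delta}_K$, and $\rho_s$ would instead converge to $m\,\mathbf 1_{K_0}$ for $K_0$ the identity component and $m$ the number of components. For connected compact $K$ the gap follows from Peter--Weyl together with the fact that a nontrivial irreducible representation has nonzero Casimir. I could also package the estimate purely operator-theoretically: writing $\rho_{s/2} - 1 = e^{\frac{(s/2 - s_0)}{2}\bar{\Delta}_K}(\rho_{s_0} - 1)$ with $\rho_{s_0}-1$ orthogonal to the constants gives $\|\rho_{s/2} - 1\|_{L^2} \le e^{-(s/2 - s_0)\lambda_1/2}\|\rho_{s_0}-1\|_{L^2}$, and then the semigroup property with Cauchy--Schwarz yields $|\rho_s(x) - 1| = \bigl|\int_K (\rho_{s/2}-1)(xk^{-1})(\rho_{s/2}-1)(k)\,dk\bigr| \le \|\rho_{s/2}-1\|_{L^2}^2$. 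However, the character expansion above is the most direct route and I would present that as the main argument.
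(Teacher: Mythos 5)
Your proposal is correct, but it takes a genuinely different route from the paper's. The paper proves this lemma in Appendix \ref{appendix.rho_s.large.s} (Theorem \ref{the.brian}) using only the abstract spectral theory of an elliptic self-adjoint operator on a compact manifold: it expands $\rho_s-1=\sum_{n\ge1}\varphi_n(e)e^{-s\lambda_n/2}\varphi_n$ in an orthonormal eigenbasis (a series convergent only in $L^2$ a priori), upgrades $L^2$ control to sup-norm control by convolving with $\rho_1$ and applying Cauchy--Schwarz, i.e.\ $|\rho_{s+1}(x)-1|^2\le\rho_2(e)\|\rho_s-1\|_{L^2(K)}^2$, and then sums the tail using the splitting $\lambda_n-\lambda_1\ge(1-\delta)\lambda_n$ with $\delta=\lambda_1/\lambda_m$. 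You instead exploit Peter--Weyl structure: Schur's lemma makes the Casimir scalar on each irreducible (note this uses the $\mathrm{Ad}$-invariance of the inner product, which is a standing assumption here but is not needed in the paper's argument), and the pointwise character bound $|\chi_\pi(x)|\le d_\pi$ plays the role of the paper's convolution-smoothing step, collapsing the whole uniform estimate to the single scalar quantity $\rho_s(e)-1$, after which your $s_0$-splitting gives the same exponential rate $e^{-s\lambda_1/2}$. Your route is shorter and more self-contained given the group structure; the paper's route buys generality --- as the authors remark, their proof works essentially unchanged on any compact Riemannian manifold, and the paper elsewhere deliberately avoids representation-theoretic estimates of this kind. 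Two details you should make explicit in a write-up: the coefficient identification $\int_K\rho_s\chi_\pi\,dk=d_\pi e^{-s\lambda_\pi/2}$ is an instance of Proposition \ref{p.heat.op.on.m.e.} with $A=I$, and the asserted absolute and uniform convergence of the character series is precisely the finiteness of $\sum_\pi d_\pi^2e^{-s\lambda_\pi/2}$, which follows from Parseval applied to the class function $\rho_{s/2}$ together with the semigroup and symmetry properties of Theorem \ref{the.B.1.16}, since $\sum_\pi d_\pi^2e^{-s\lambda_\pi/2}=\|\rho_{s/2}\|_{L^2(K)}^2=\rho_s(e)<\infty$. Your explicit treatment of connectedness (the multiplicity-one kernel, equivalently $\lambda_\pi=0$ only for the trivial $\pi$) is a point the paper's statement leaves implicit, so that remark is a genuine plus.
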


This result holds more generally on compact Riemannian manifolds. In the case
of a compact Lie group, the result follows easily from the expansion of the
heat kernel in terms of characters.

With these results in hand, we may now prove Theorem \ref{largeS.thm},
describing the large-$s$ limit of the transform $B_{s,\tau}$.

\begin{proof}
[Proof of Theorem \ref{largeS.thm}]Since $K$ is compact, the function
$\rho_{s}$ is bounded and bounded away from zero, showing that $L^{2}%
(K)=L^{2}(K,\rho_{s})$ as sets. The equality of $L^{2}(K_{\mathbb{C}},\nu
_{t})$ and $L^{2}(K_{\mathbb{C}},\mu_{s,\tau})$ as sets follows from the
averaging lemma (Lemma \ref{averaging.lem}) and Proposition
\ref{nutIndep.prop}. We then note that as $s$ tends to infinity with $\tau$
fixed, the parameter $\sigma$ in the averaging lemma can be chosen to tend to
infinity. Thus, by Lemma \ref{compactRhoSinfinity.lem}, the constants in the
averaging lemma tend to 1 as $s$ tend to infinity, from which the claimed
convergence of norms follows. The equalities of the various Hilbert spaces as
sets and the convergence of the norms allows us to deduce the unitarity of
$B_{\infty,\tau}$ from the unitarity of the maps $B_{s,\tau}$.
\end{proof}

\appendix

\section{Proof of Proposition \ref{nutIndep.prop}\label{technicalities.app}}

In this section, we provide a proof of Proposition \ref{nutIndep.prop}, which
we argued for heuristically in Section \ref{section s-->infinity}.

\begin{theorem}
\label{the.B.1.13} Let $G$ be a Lie group with Lie algebra $\mathfrak{g}$ and
fix an inner product on $\mathfrak{g}.$ For any subspace $V\subseteq
\mathfrak{g}$, define%
\[
\Delta_{V}=\sum_{j}\tilde{X}_{j}^{2},
\]
where $\{X_{j}\}$ is an orthonormal basis for $V,$ with domain $\mathcal{D}%
(\Delta_{V})=C_{c}^{\infty}(G)$. Then $\Delta_{V}$ is essentially self-adjoint
as an unbounded operator on $L^{2}(G,dg)$, where $dg$ is a right Haar measure.
Moreover, its closure $\bar{\Delta}_{V}$ is non-positive, and the associated
heat operators $e^{\frac{t}{2}\bar{\Delta}_{V}}$ are left-invariant for each
$t>0$.
\end{theorem}

We give here a proof based on work of J\o rgensen; a brief outline of a more
elementary argument was given in \cite[p. 950]{Driver2009a}, based on a method
communicated to the first author by L. Gross.

\begin{proof}
We fix a left Haar measure $m$ in addition to the right Haar measure $\lambda$
on $G$. Let $R$ be the unitary right regular representation on $L^{2}%
(G,\lambda)$, i.e. for $x\in G$ and $\varphi\in L^{2}(G,\lambda)$ let
\[
(R(x)\varphi)(y)=\varphi(yx)\quad\text{ for all }y\in G.
\]
For $f\in C_{c}^{\infty}(G)$ and $\varphi\in L^{2}(G,\lambda)$ we associate a
\textquotedblleft G\aa rding vector\textquotedblright, $g:=R(f)\varphi\in
L^{2}(G,\lambda)$, defined by%
\begin{align}
(R(f)\varphi)(y)  &  :=\int_{G}f(x)(R(x)\varphi)(y)\,dm(x)\nonumber\\
&  =\int_{G}f(x)\varphi(yx)\,dm(x)=\int_{G}f(y^{-1}x)\varphi(x)\,dm(x).
\label{e.rf}%
\end{align}
(According to a result of Malliavin and Dixmier \cite{Dixmier1978}, the space
of G\aa rding vectors coincides with the space of \textquotedblleft$C^{\infty
}$ vectors.\textquotedblright)

For $X\in\mathfrak{g}$ let $\hat{X}$ denote the right-invariant vector field
on $G$ which agrees with $X$ at the identity (as compared with the
left-invariant vector field $\tilde{X}$). By general theory in \cite[Theorem
1.1]{Jorgensen1975} or by direct computation, $R(f)\varphi\in C^{\infty
}(G)\cap L^{2}(G,\lambda)$ and
\begin{equation}
\widetilde{X}g=\widetilde{X}R(f)\varphi=R\left(  -\hat{X}f\right)  \varphi\in
C^{\infty}(G)\cap L^{2}(G,\lambda),\quad\forall\;X\in\mathfrak{g}.
\label{e.xg}%
\end{equation}

Let $\mathcal{D}(L_{1})$ denote the span of of the G\aa rding vectors and
$L_{1}:=L_{0}|_{\mathcal{D}(L_{1})}$. According to \cite[Theorem
1.1]{Jorgensen1975} with $U=R$, the operator $L_{1}$ is essentially
self-adjoint. To complete the proof it suffices to show $\bar{L}=\bar{L}_{1}$
and for this it suffices to show $L_{1}\subset\bar{L}$ and $L\subset\bar
{L}_{1}.$ We now verify the two desired operator inclusions.

\begin{itemize}
\item ($L_{1}\subset\bar{L}$) Let $g:=R(f)\varphi\in\mathcal{D}(L_{1})$ be a
G\aa rding vector as above. Choose a sequence $\{h_{n}\}_{n=1}^{\infty}\subset
C_{c}^{\infty}(G,[0,1])$ as in \cite[Lemma 3.6]{Driver1997} such that
$h_{n}=1$ on a Riemannian ball of radius $n$ relative to the left-invariant
Riemannian metric on $G$, and so $\sup_{x\in G}|Sh_{n}(x)|<\infty$ whenever
$S$ is any left-invariant differential operator on $G$. By the dominated
convergence theorem, the fact $Sg\in C^{\infty}(G)\cap L^{2}(G,\lambda)$ for
any left-invariant differential operator, $S$, on $G$ (see \eqref{e.xg}), and
the stated properties of $\{h_{n}\}_{n=1}^{\infty}$, it is easily shown that
$h_{n}g\rightarrow g$ and $L(h_{n}g)\rightarrow L_{1}g$ in $L^{2}(G,\lambda)$
as $n\rightarrow\infty$. This shows that $g\in\mathcal{D}(\bar{L})$ and
$\bar{L}g=L_{1}g$, i.e., $L_{1}\subset\bar{L}$.

\item ($L\subset\bar{L}_{1}$) Choose $\delta_{n}\in C_{c}^{\infty}%
(G,[0,\infty))$ such that $\int_{G}\delta_{n}(x)\,dm(x)=1$ for each $n$ and
$\mathrm{supp}(\delta_{n})\downarrow\{e\}$ as $n\rightarrow\infty$. Let
$\iota\colon G\rightarrow G$ denote the inverse map, i.e.\ $\iota(x)=x^{-1}$
for all $x\in G$. If $f\in C_{c}^{\infty}(G)$, then $g_{n}:=R(f\circ
\iota))\delta_{n}\rightarrow f$ in in $L^{2}(G,\lambda)$ as $n\rightarrow
\infty$ (see \eqref{e.rf}). Moreover, $g_{n}\in\mathcal{D}(L_{1}%
)\cap\mathcal{D}(L)$ and
\[
L_{1}g_{n}=R\left\{  \sum_{j=1}^{k}\hat{X}_{j}^{2}(f\circ\iota)\right\}
\delta_{n}=R\left(  (Lf)\circ\iota\right)  \delta_{n}\rightarrow
Lf,\quad\text{as }n\rightarrow\infty
\]
where the convergence is in $L^{2}(G,\lambda)$. Thus, it follows that
$f\in\mathcal{D}(\bar{L}_{1})$ and $\bar{L}_{1}f=Lf$, i.e., $L\subset\bar
{L}_{1}$.
\end{itemize}

This concludes the proof of self-adjointness. The non-positivity of the
self-adjoint extension $\bar{L}$ and the left invariance of the operators
$e^{t\bar{L}}$ are now standard exercises.
\end{proof}

\begin{lemma}
\label{lem.commy}Let $H$ be a separable Hilbert space, let $A$ and $B$ be two
essentially self-adjoint non-positive operators on $H$, and suppose $Q\colon
H\rightarrow H$ is a bounded operator such $QB\subseteq AQ$; i.e.,
$Q(\mathcal{D}(B))\subseteq\mathcal{D}(A)$ and $QB=AQ$ on $\mathcal{D}(B)$.
Then $Qe^{t\bar{B}}=e^{t\bar{A}}Q$ for all $t\geq0$.
\end{lemma}

\begin{proof}
If $f\in\mathcal{D}(\bar{B})$ and $f_{n}\in\mathcal{D}(B)$ such that
$f_{n}\rightarrow f$ and $Bf_{n}\rightarrow\bar{B}f$, then $Qf_{n}\rightarrow
Qf$ and $AQf_{n}=QBf_{n}\rightarrow Q\bar{B}f$ as $n\rightarrow\infty$.
Therefore it follows that $Qf\in\mathcal{D}(\bar{A})$ and $\bar{A}Qf=Q\bar
{B}f$ for all $f\in\mathcal{D}(\bar{B})$; i.e., $Q\bar{B}\subseteq\bar{A}Q$.
So for any $\lambda\in\mathbb{C}$ we may conclude that $(\lambda I-\bar
{A})Qf=Q(\lambda I-\bar{B})f$ for all $f\in\mathcal{D}(\bar{B})$. If we assume
$\lambda>0$ and $g\in H$, we may take $f=(\lambda I-\bar{B})^{-1}%
g\in\mathcal{D}(\bar{B})$ in the previous identity to find
\[
(\lambda I-\bar{A})Q(\lambda I-\bar{B})^{-1}g=Qg.
\]
Multiplying this equation by $(\lambda I-\bar{A})^{-1}$ and using the fact
that $g$ was arbitrary shows that $Q(\lambda I-\bar{B})^{-1}=(\lambda
I-\bar{A})^{-1}Q$ or, equivalently,%
\[
Q(I-\lambda^{-1}\bar{B})^{-1}=(I-\lambda^{-1}\bar{A})^{-1}Q\quad\text{ for all
}\;\lambda>0.
\]
A simple induction argument then shows that
\begin{equation}
Q(I-\lambda^{-1}\bar{B})^{-n}=(I-\lambda^{-1}\bar{A})^{-n}Q\quad\text{ for all
}\;\lambda>0. \label{equ.qcom}%
\end{equation}

Now, note that $\lim_{n\rightarrow\infty}(1-\frac{y}{n})^{-n}=e^{y}$ and
$0\leq(1-\frac{y}{n})^{-n}\leq1$ for $y\leq0$. We thus obtain the following
strong operator limits, using the spectral theorem and the dominated
convergence theorem:
\[
e^{t\bar{B}}=\lim_{n\rightarrow\infty}\left(  I-\frac{t}{n}\bar{B}\right)
^{-n}\quad\text{ and }\quad e^{t\bar{A}}=\mathrm{\!\!}\lim_{n\rightarrow
\infty}\left(  I-\frac{t}{n}\bar{A}\right)  ^{-n}.
\]
Therefore, taking $\lambda=n/t$ in \eqref{equ.qcom} and then letting
$n\rightarrow\infty$ shows $Qe^{t\bar{B}}=e^{t\bar{A}}Q$ for all $t>0$. This
completes the proof for $t>0$, and the $t=0$ case is immediate.
\end{proof}

\begin{corollary}
\label{c.B.1.14}If $K$ is a Lie subgroup of $G$, $V\subseteq\mathfrak{g}$ is
an $\mathrm{Ad}(K)$-invariant subspace, and $\left\langle \cdot,\cdot
\right\rangle _{V}$ is an $\mathrm{Ad}(K)$-invariant inner product on $V$,
then $e^{\frac{t}{2}\bar{\Delta}_{V}}$ commutes with right translations by
elements of $K$.
\end{corollary}

\begin{proof}
If $Q$ is a right-translation by an element of $K$ and $A=B=\Delta_{V}$ with
$\mathcal{D}(\Delta_{V})=C_{c}^{\infty}(G)$, then $QB=AQ$, and $Q$ preserves
$\mathcal{D}(\Delta_{V})$ in this case. The result now follow by an
application of Lemma \ref{lem.commy}.
\end{proof}

\begin{definition}
[$K$-averaging]\label{def.kavg}Let $P$ be the $K$\textbf{-averaging operator}
defined on $L_{\mathrm{loc}}^{1}(K_{\mathbb{C}})$ by
\[
\left(  Pf\right)  (z)=\int_{K}f(zk)\,dk
\]
where $dk$ denotes the Haar probability measure on $K$.
\end{definition}

Since the Haar measure on $K$ is invariant under inversion and the convolution
with itself is still Haar measure, we can easily check that $P\colon
L^{2}(K_{\mathbb{C}})\rightarrow L^{2}(K_{\mathbb{C}})$ is an orthogonal
projection. The operator $P$ also preserves the subspaces $C^{\infty
}(K_{\mathbb{C}})$ and $C_{c}^{\infty}(K_{\mathbb{C}})$ and if $f\in
C(K_{\mathbb{C}})$ we have $Pf(zk)=Pf(z)$ for all $k\in K$ and $z\in
K_{\mathbb{C}}$. Proposition \ref{nutIndep.prop} states, in this language,
that $P\mu_{s,\tau}=\nu_{t},$ where~$t=\operatorname{Re}\tau.$

\begin{proof}
[Proof of Proposition \ref{nutIndep.prop}]If $X\in\mathfrak{k}$ and $f\in
C_{c}^{\infty}(K_{\mathbb{C}})$, then $\left(  Pf\right)  (ze^{rX})=\left(
Pf\right)  (z)$ for all $z\in K_{\mathbb{C}}$ and $r\in\mathbb{R}$.
Differentiating at $r=0$ shows that $\widetilde{X}Pf=0$ for any $X\in
\mathfrak{k}$. Using the fact that $\tilde{X}_{j}\tilde{Y}_{j}=\tilde{Y}%
_{j}\tilde{X}_{j}$, which follows from the definition $Y_{j}=JX_{j}$ and
\eqref{Jbracket}, it follows from Definition \ref{d.Lap.rsu} that
\begin{equation}
\Delta_{s,\tau}P=\frac{t}{2}\Delta_{J\mathfrak{k}}P=P\frac{t}{2}%
\Delta_{J\mathfrak{k}}\;\text{ on }\;C_{c}^{\infty}(K_{\mathbb{C}}%
),\quad\text{ where}\;\Delta_{J\mathfrak{k}}:=\sum_{j=1}^{d}\tilde{Y}_{j}^{2}.
\label{equ.dst}%
\end{equation}
For the last equality, we have used that $\Delta_{J\mathfrak{k}}$ commutes
with right translations by elements of $K$ and therefore with $P.$ An
application of Lemma \ref{lem.commy} with $Q=P$, $A=\Delta_{s,\tau}$, and
$B=\frac{t}{2}\Delta_{J\mathfrak{k}}$ gives, $Pe^{\frac{t}{2}\bar{\Delta
}_{J\mathfrak{k}}}=e^{\bar{\Delta}_{s,\tau}}P$ for all $\tau\in\mathbb{D}%
\left(  s,s\right)  $ with $\operatorname{Re}\tau=t.$ In particular we may
conclude that
\begin{equation}
e^{\bar{\Delta}_{s,\tau}}P=e^{\bar{\Delta}_{s,t}}P\text{ }\quad\forall
~\tau=t+iu\in\mathbb{D}\left(  s,s\right)  \label{equ.comP}%
\end{equation}
or equivalently that%
\begin{equation}
\langle e^{\bar{\Delta}_{s,\tau}}Pv,w\rangle_{L^{2}(K_{\mathbb{C}})}=\langle
e^{\bar{\Delta}_{t,t}}Pv,w\rangle_{L^{2}(K_{\mathbb{C}})}\text{ }\quad
\forall~u,v\in C_{c}(K_{\mathbb{C}},\mathbb{R}). \label{e.convert.s.tau->t.t}%
\end{equation}

For the rest of the proof let $\bar{\mu}_{s,\tau}=P\mu_{s,\tau}$ be the
$K$-average of $\mu_{s,\tau}$. We may rewrite the left-hand-side of
\eqref{e.convert.s.tau->t.t} as,%
\begin{align*}
\langle e^{\bar{\Delta}_{s,\tau}}Pv,w\rangle_{L^{2}(K_{\mathbb{C}})}  &
=\int_{K_{\mathbb{C}}^{2}}\mu_{s,\tau}(g)(Pv)(zg)w(z)\,dg\,dz\\
&  =\int_{K_{\mathbb{C}}^{2}\times K}\mu_{s,\tau}(g)v(zgk)w(z)\,dg\,dz\,dk\\
&  =\int_{K_{\mathbb{C}}^{2}\times K}\mu_{s,\tau}(gk^{-1}%
)v(zg)w(z)\,dg\,dz\,dk\\
&  =\int_{K_{\mathbb{C}}^{2}\times K}\mu_{s,\tau}(gk)v(zg)w(z)\,dg\,dz\,dk\\
&  =\int_{K_{\mathbb{C}}^{2}}\bar{\mu}_{s,\tau}(g)v(zg)w(z)\,dg\,dz.
\end{align*}
This equation with $\tau=t$ also shows the right-hand-side of
\eqref{e.convert.s.tau->t.t} is given by
\[
\langle e^{\bar{\Delta}_{t,t}}Pv,w\rangle_{L^{2}(K_{\mathbb{C}})}%
=\int_{K_{\mathbb{C}}^{2}}\nu_{t}(g)v(zg)w(z)\,dg\,dz.
\]

Comparing the last two identities shows, for all $v,w\in C_{c}(K_{\mathbb{C}%
})$,
\[
\int_{K_{\mathbb{C}}^{2}}\bar{\mu}_{s,\tau}(g)v(zg)w(z)\,dg\,dz=\int%
_{K_{\mathbb{C}}^{2}}\nu_{t}(g)v(zg)w(z)\,dg\,dz.
\]
As $C_{c}(K_{\mathbb{C}})$ is dense in $L^{2}(K_{\mathbb{C}})$, we may
conclude that for all $v\in C_{c}(K_{\mathbb{C}})$,%
\[
\int_{K_{\mathbb{C}}}\bar{\mu}_{s,\tau}(g)v(zg)\,dg=\int_{K_{\mathbb{C}}}%
\nu_{t}(g)v(zg)\,dg\quad\text{for a.e. }z
\]
and hence for every $z\in K_{\mathbb{C}}$ as both sides of the previous
equation are continuous in $z$. Thus, taking $z=e$, it follows that,
\[
\int_{K_{\mathbb{C}}}\bar{\mu}_{s,\tau}(g)v(g)\,dg=\int_{K_{\mathbb{C}}}%
\nu_{t}(g)v(g)\,dg\text{ }\forall~v\in C_{c}(K_{\mathbb{C}},\mathbb{R}).
\]
So as above, the density of $C_{c}(K_{\mathbb{C}})$ in $L^{2}(K_{\mathbb{C}})$
along with the continuity of both $\bar{\mu}_{s,\tau}$ and $\nu_{t},$ allows
us to conclude that $\bar{\mu}_{s,\tau}(g)=\nu_{t}(g)$ for all $g\in
K_{\mathbb{C}}$.
\end{proof}

\subsection*{Acknowledgments}

This project began as the result of a conversation between the third author
and Thierry L\'{e}vy at Oberwolfach in June, 2015, regarding the idea of
classifying all $\mathrm{Ad}(\mathrm{U}(n))$-invariant inner products on
$\mathrm{GL}(n)$ (and studying their large-$n$ limits). This led the third
author to prove Theorem \ref{t.Ad(K)-inv.inn.prod}, and consequently to wonder
if this extension of the two-parameter family of inner products studied in
\cite{Kemp2015b} was associated to some kind of \textquotedblleft twisted
Segal--Bargmann transform\textquotedblright\ extending the one in
\cite{Driver1999} and \cite{Hall2001b}.

The authors thank the referee for detailed comments that helped streamline the presentation.

\bibliographystyle{acm}
\bibliography{C-Time-SBT}

\end{document}